\newcommand*\tasklabelformat[1]{#1)}
\numberwithin{equation}{section}
\newtheorem*{rep@theorem}{\rep@title}
\newcommand{\newreptheorem}[2]{%
\newenvironment{rep#1}[1]{%
 \def\rep@title{#2 \ref{##1}}%
 \begin{rep@theorem}}%
 {\end{rep@theorem}}}
\theoremstyle{theorem}
\newtheorem{thm}{Theorem}[section]
\newtheorem*{thm*}{Theorem}
\theoremstyle{definition}
\newtheorem{prop}[thm]{Proposition}
\newtheorem*{prop*}{Proposition}
\newtheorem{defn}[thm]{Definition}
\newtheorem{lem}[thm]{Lemma}
\newtheorem{cor}[thm]{Corollary}
\newtheorem*{cor*}{Corollary}
\theoremstyle{remark}
\newtheorem{rem}[thm]{Remark}
\title{\vspace*{-1.5cm} Submultiplicative norms and filtrations 
on section rings}
\author
{Siarhei Finski
}
\date{}
\newcommand{\imun} {\sqrt{-1}}
\newcommand{\res}{{\rm{Res}}}
\newcommand{\conv}{{\rm{conv}}}
\newcommand{\sym}{{\rm{Sym}}}
\newcommand{\comp}{\mathbb{C}}
\newcommand{\real}{\mathbb{R}}
\newcommand{\nat}{\mathbb{N}}
\newcommand{\integ}{\mathbb{Z}}
\newcommand{\enmr}[1]{\text{End}{(#1)}}
\newcommand{\Conv}[1]{{\rm{Conv}}{(#1)}}
\newcommand{\ccal}{\mathscr{C}}
\newcommand{\dbar}{ \overline{\partial} }
\DeclareFontFamily{OMX}{MnSymbolE}{}
\DeclareSymbolFont{MnLargeSymbols}{OMX}{MnSymbolE}{m}{n}
\DeclareFontShape{OMX}{MnSymbolE}{m}{n}{
    <-6>  MnSymbolE5
   <6-7>  MnSymbolE6
   <7-8>  MnSymbolE7
   <8-9>  MnSymbolE8
   <9-10> MnSymbolE9
  <10-12> MnSymbolE10
  <12->   MnSymbolE12
}{}
\DeclareFontShape{OMX}{MnSymbolE}{b}{n}{
    <-6>  MnSymbolE-Bold5
   <6-7>  MnSymbolE-Bold6
   <7-8>  MnSymbolE-Bold7
   <8-9>  MnSymbolE-Bold8
   <9-10> MnSymbolE-Bold9
  <10-12> MnSymbolE-Bold10
  <12->   MnSymbolE-Bold12
}{}
\let\llangle\@undefined
\let\rrangle\@undefined
\DeclareMathDelimiter{\llangle}{\mathopen}%
                     {MnLargeSymbols}{'164}{MnLargeSymbols}{'164}
\DeclareMathDelimiter{\rrangle}{\mathclose}%
                     {MnLargeSymbols}{'171}{MnLargeSymbols}{'171}
\newenvironment{sciabstract}{}
\begin{document}

\maketitle 

\vspace*{-0.7cm}

{\centering \small \textit{Dedicated to Xiaonan Ma on the occasion of his 50\textsuperscript{th} birthday}\par}

\vspace*{0.3cm}

\begin{sciabstract}
  \textbf{Abstract.}
 We show that submultiplicative norms on section rings of polarised projective manifolds are asymptotically equivalent to sup-norms associated with metrics on the polarisation.
 We then discuss some applications to the spectral theory of submultiplicative filtrations, the asymptotic study of the Narasimhan-Simha pseudonorms, and holomorphic extension theorem. 
 As an unexpected byproduct, we show that injective and projective tensor norms on symmetric algebras of finite dimensional complex normed vector spaces are asymptotically equivalent. 
\end{sciabstract}

\pagestyle{fancy}
\lhead{}
\chead{Submultiplicative norms and filtrations on section rings}
\rhead{\thepage}
\cfoot{}


\newcommand{\Addresses}{{
  \bigskip
  \footnotesize
  \noindent \textsc{Siarhei Finski, CNRS-CMLS, École Polytechnique F-91128 Palaiseau Cedex, France.}\par\nopagebreak
  \noindent  \textit{E-mail }: \texttt{finski.siarhei@gmail.com}.
}} 

\vspace*{0.25cm}

\par\noindent\rule{1.25em}{0.4pt} \textbf{Table of contents} \hrulefill

\vspace*{-1.5cm}

\tableofcontents

\vspace*{-0.2cm}

\noindent \hrulefill


\section{Introduction}\label{sect_intro}
	For a holomorphic line bundle $L$ over a compact complex manifold $X$, we define the \textit{section ring}
	\begin{equation}
		R(X, L) := \oplus_{k = 0}^{\infty} H^0(X, L^{\otimes k}).
	\end{equation}
	A graded norm $N = \sum N_k$, $N_k := \| \cdot \|_k$, over $R(X, L)$ is called \textit{submultiplicative} if for any $k, l \in \nat^*$, $f \in H^0(X, L^{\otimes k})$, $g \in H^0(X, L^{\otimes l})$, we have
	\begin{equation}\label{eq_subm_s_ring}
		\| f \cdot g \|_{k + l} \leq 
		\| f \|_k \cdot
		\| g \|_l.
	\end{equation}
	\par 
	\begin{sloppypar}
	As a basic example, any bounded metric $h^L$ on $L$ induces the sequence of sup-norms ${\rm{Ban}}^{\infty}_k(h^L) := \| \cdot \|_{L^{\infty}_{k}(X, h^L)}$ over $H^0(X, L^{\otimes k})$, defined for $f \in H^0(X, L^{\otimes k})$ as follows
	\begin{equation}
		\| f \|_{L^{\infty}_{k}(X, h^L)} = \sup_{x \in X} |f(x)|_{h^L}.
	\end{equation}
	The associated graded norm ${\rm{Ban}}^{\infty}(h^L) = \sum {\rm{Ban}}^{\infty}_k(h^L)$ is clearly submultiplicative (${\rm{Ban}}$ here stands for “Banach").
	The main goal of this article is to prove that under some mild assumptions on $L$ and $N$, asymptotically, these are the only possible examples.
	\end{sloppypar}
	\par 
	More precisely, we say that two graded norms $N = \sum N_k$, $N' = \sum N_k'$ over $R(X, L)$ are \textit{equivalent} ($N \sim N'$) if the multiplicative gap between the graded pieces, $N_k$ and $N_k'$, is subexponential. This means that for any $\epsilon > 0$, there is $k_0 \in \nat^*$, such that for any $k \geq k_0$, we have
	\begin{equation}\label{defn_equiv_rel}
		\exp(-\epsilon k) \cdot N_k
		\leq
		N_k'
		\leq
		\exp(\epsilon k) \cdot N_k.
	\end{equation}
	\par 
	We now assume that $L$ is ample. 
	Then for any $k \in \nat^*$, such that $L^{\otimes k}$ is very ample, any norm $N_k$ on $H^0(X, L^{\otimes k})$ induces the Fubini-Study metric $FS(N_k)$ on $L^{\otimes k}$ through the associated Kodaira embedding, see (\ref{eq_fs_defn}).
	By Fekete's lemma, for any submultiplicative norm $N$, the sequence of metrics $FS(N_k)^{\frac{1}{k}}$ converges, as $k \to \infty$, to a (possibly only bounded from above and even null) metric on $L$, which we denote by $FS(N)$, cf. Lemma \ref{lem_fs_dini}. 
	We can now state our first main result.
	\begin{thm}\label{thm_char}
		Assume that a graded norm $N = \sum N_k$ over the section ring $R(X, L)$ of an ample line bundle $L$ is submultiplicative and $FS(N)$ is continuous.
		Then 
		\begin{equation}\label{eq_char}
			N \sim {\rm{Ban}}^{\infty}(FS(N)).
		\end{equation}
	\end{thm}
	\begin{rem}
		a)
		The continuity of $FS(N)$ without submultiplicativity of $N$ do not determine the equivalence class of $N$, see Proposition \ref{prop_ex_dp_nnwork} or \cite[Proposition 4.16]{FinSecRing} for examples.
		\par 
		b)
		Following \cite[Definition 1.3]{FinSecRing}, one can formulate a purely algebraic criterion for a norm $N$ to be such that $FS(N)$ is continuous.
	\end{rem}
	\par
	In some of our applications, submultiplicative norms $N$ with non-continuous $FS(N)$ arise naturally, see Remark \ref{rem_ex_non_cont_ray} for an example arising from submultiplicative filtrations. 
	To study them, we define a weaker equivalence relation on the set of graded norms.
	Let $\lambda_1, \cdots, \lambda_r$ be the ordered logarithmic relative spectrum between two norms $N, N'$ on a finite dimensional complex vector space $V$, $\dim V = r$, see Section \ref{sect_append} for the definition.
	For $p \in [1, + \infty[$, we let
	\begin{equation}\label{eq_dp_defn_norms}
		d_p(N, N') := \sqrt[p]{\frac{\sum_{i = 1}^{r} |\lambda_i|^p}{r}},
		\qquad 
		d_{+ \infty}(N, N') := \max \big\{ |\lambda_1|, |\lambda_r| \big\},
	\end{equation}
	We say that graded norms $N = \sum N_k$ and $N' = \sum N_k'$ are $p$-\textit{equivalent} ($N \sim_p N'$) if 
 	\begin{equation}\label{defn_equiv_relp}
		\frac{1}{k} d_p(N_k, N_k') \to 0, \qquad \text{as } k \to \infty.
	\end{equation}
	We show in Section \ref{sect_append} that $\sim_p$, $p \in [1, +\infty]$, is an equivalence relation and $\sim$ equals $\sim_{+ \infty}$. 
	\par 
	A graded norm $N$ on $R(X, L)$ is called \textit{bounded} if $N \geq {\rm{Ban}}^{\infty}(h^L)$ for a certain smooth metric $h^L$ on $L$.
	For submultiplicative $N$, it is equivalent to the boundness of $FS(N)$, see (\ref{eq_norm_with_fs_compar}).
	We can now state our second main result.
	\begin{thm}\label{thm_char2}
		Assume that a graded norm $N = \sum N_k$ over the section ring $R(X, L)$ of an ample line bundle $L$ is submultiplicative and bounded.
		Then for any $p \in [1, + \infty[$, we have
		\begin{equation}\label{eq_char2}
			N \sim_p {\rm{Ban}}^{\infty}(FS(N)).
		\end{equation}
	\end{thm}
	\begin{rem}\label{rem_char2}
		a) For $p = + \infty$, the analogous statement fails, see Proposition \ref{prop_example}.
		In particular, the continuity assumption from Theorem \ref{thm_char} cannot be replaced by the boundness assumption.
		\par 
		b) 
		In non-Archimedean setting, where a submultiplicative norm is replaced by a submultiplicative filtration, a result analogous to Theorem \ref{thm_char} is called valuation theorem, and it was established by Rees, \cite{ReesValI}, \cite{ReesValII}, cf. also \cite[\S 4.1, \S 5.3]{ReesBook}, in the setting generalizing finitely generated filtrations on section rings. 
		Recent works of Boucksom-Jonsson \cite[Theorems D and 2.26]{BouckJohn21}, cf. also Reboulet \cite{Reboulet21}, further extended the valuation theorem in the setting of general bounded submultiplicative filtrations and in realms of Theorem \ref{thm_char2}. See Section \ref{sect_homog} for details.
	\end{rem}
	\par 
	It is natural to ask if $FS(N)$ is the unique metric which can be put on the right-hand side of (\ref{eq_char}) and (\ref{eq_char2}).
	While it is not the case if we are allowed to consider arbitrary metrics on the line bundle, see Theorem \ref{thm_bouck_erikss}, it becomes true if we restrict our attention to certain subclasses of them.
	To explain this in details, recall that a \textit{metric with plurisubharmonic (or psh) weight} is the (singular) metric $h^L$ on a holomorphic line bundle $L$ such that for any local holomorphic frame $\sigma$ of $L$, $- \log |\sigma|_{h^L}$ is psh.
	In what follows, for brevity, we call such metrics \textit{psh} metrics.
	A line bundle is called \textit{pseudoeffective} (or \textit{psef}) if it carries a psh metric.
	\begin{defn}\label{defn_regul_above}
		We say that a bounded psh metric $h^L$ is \textit{regularizable from above} if there is a decreasing sequence of continuous psh metrics $h^L_i$, $i \in \nat$, converging to $h^L$ almost everywhere.
	\end{defn}
	\begin{rem}\label{rem_regul_above}
		a) In \cite[Theorem 2]{BedfordRegulBelow} Bedford-Taylor described in a local setting regularizable from above psh metrics as those having pluripolar discontinuity set. 
		\par 
		b) According to Demailly's regularization theorem, see \cite{Dem82}, \cite{DemRegul}, on ample line bundle, any psh metric is regularizable from below, meaning that there is an increasing sequence of smooth positive metrics $h^L_i$, $i \in \nat$, converging pointwise to $h^L$, cf. \cite[Theorem 8.1]{GuedZeriGeomAnal}. 
	\end{rem}
	We will now fix a bounded submultiplicative norm $N$ on the section ring.
	As we explain in Section \ref{sect_append}, Theorem \ref{thm_char2} tells us that for any $p \in [1, +\infty[$, the lower semi-continuous regularization, $FS(N)_*$, of $FS(N)$ is the only regularizable from above psh metric for which the sup-norm lies in $\sim_p$-equivalence class of the fixed norm.
	Moreover, if $FS(N)$ is continuous, then, as we explain in Section \ref{sect_append}, Theorem \ref{thm_char} tells us that $FS(N)$ is the only continuous psh metric for which the sup-norm lies in $\sim$-equivalence class of the fixed norm.
	\par 
	We will now comment on the proofs of Theorems \ref{thm_char} and \ref{thm_char2}.
	The core of the argument is based on an interpretation of the submultiplicativity condition in terms of projective tensor norms, see (\ref{eq_reform_subm_cond}).
	We then use the techniques from \cite{FinSecRing} to reduce the proofs to the special case when $X$ is a projective space and $L$ is the hyperplane bundle.
	In this setting, the above statements are essentially equivalent to showing that injective and projective tensor norms on symmetric algebras of finite dimensional complex normed vector spaces are asymptotically equivalent, see Theorem \ref{thm_sym_equiv}.
	Surprisingly, our proof of this functional-analytic statement uses tools from complex geometry, as Ohsawa-Takegoshi extension theorem.
	Remark also that in full tensor algebras the projective and injective tensor norms are essentially never equivalent by a result of Pisier \cite{PisierDecart}, see Remark \ref{rem_sym_equiv}a).
	\begin{sloppypar}
	We now describe some applications of Theorems \ref{thm_char} and \ref{thm_char2}.
	We fix a compact complex manifold $X$ of dimension $n$ and denote by $K_X := \Lambda^{n} T^{(1, 0)*}X$ its canonical line bundle.
	Narasimhan-Simha in \cite{NarSimh} defined pseudonorms $\mathcal{NS}_k := \| \cdot \|^{\mathcal{NS}}_{k}$, $k \in \nat^*$, over the vector space of $k$-th \textit{pluricanonical sections}, $f \in H^0(X, K_X^k)$, as 
	\begin{equation}\label{eq_ns_defn}
		\| f \|^{\mathcal{NS}}_{k}
		:=
		\Big( \int_X  \big( (- \imun)^{k(n^2 + 2n)} \cdot f \wedge \overline{f} \big)^{\frac{1}{k}} \Big)^{k}.
	\end{equation}
	\end{sloppypar}
	\par 
	Remark that the sequence of pseudonorms $\mathcal{NS}_k$, $k \in \nat^*$, is defined without the use of any fixed metric on $K_X$.
	In particular, it depends only on the complex structure of $X$. 
	Even more, it is a \textit{birational invariant}, as birational equivalence between two complex manifolds $X$ and $Y$ induces the isometry with respect to $\mathcal{NS}_k$ between $H^0(X, K_X^k)$ and $H^0(Y, K_Y^k)$ for any $k \in \nat^*$, cf. \cite{NarSimh}.
	\par 
	In Section \ref{sec_klt}, we show that Narasimhan-Simha pseudonorms are submultiplicative and, as a consequence of this, we provide an application of Theorem \ref{thm_char} about their asymptotic structure.
	\par 
	In Section \ref{sect_hol_ext_pp}, we observe that the quotients of submultiplicative norms are submultiplicative.
	As a consequence of this, we then provide an application of Theorem \ref{thm_char2} to the holomorphic extension problem for non-regular metrics.
	\par 
	For a further application, let us recall the notion of submultiplicative filtrations.
	Recall that a \textit{decreasing $\real$-filtration} $\mathcal{F}$ of a vector space $V$ is a map from $\real$ to vector subspaces of $V$, $t \mapsto \mathcal{F}^t V$, verifying $\mathcal{F}^t V \subset \mathcal{F}^s V$ for $t > s$, and such that $\mathcal{F}^t V  = V$ for sufficiently small $t$ and $\mathcal{F}^t V = \{0\}$ for sufficiently big $t$.
	We say that $\mathcal{F}$ is \textit{graded} if it respects the grading of $V$.
	It is \textit{left-continuous} if for any $t \in \real$, there is $\epsilon_0 > 0$, such that $\mathcal{F}^t V = \mathcal{F}^{t - \epsilon} V $ for any $0 < \epsilon < \epsilon_0$.
	All filtrations in this article are assumed to be decreasing left-continuous and graded if applicable.
	\par 
	A filtration $\mathcal{F}$ on $R(X, L)$ is called \textit{submultiplicative} 
	if for any $t, s \in \real$, $k, l \in \nat$, we have 
	\begin{equation}
		\mathcal{F}^t H^0(X, L^{\otimes k}) \cdot \mathcal{F}^s H^0(X, L^{\otimes l}) \subset \mathcal{F}^{t + s} H^0(X, L^{\otimes (k + l)}).
	\end{equation}
	We say that $\mathcal{F}$ is \textit{bounded} if there is $C > 0$ such that for any $k \in \nat^*$, $\mathcal{F}^{ C k} H^0(X, L^{\otimes k}) = \{0\}$.
	Studying asymptotic properties of bounded submultiplicative filtrations is related to K-stability due to their relation with test configurations, see Section \ref{sect_filt}, cf. \cite{NystOkounTest}, \cite{SzekeTestConf}, \cite{BouckJohn21}.
	\par 
	We will show in Section \ref{sect_pf_filt} that any submultiplicative filtration induces a ray of submultiplicative norms.
	Using this, we then establish an application of Theorem \ref{thm_char2} about the relation between the spectral properties of bounded submultiplicative filtrations and associated geodesic rays, generalizing previous works of Witt Nystr{\"o}m \cite{NystOkounTest} and Hisamoto \cite{HisamSpecMeas}
	\par
	This paper is organized as follows. 
	In Section \ref{sect_prel}, we recall the preliminaries.
	In Section \ref{sect_class_sn}, we prove Theorems \ref{thm_char}, \ref{thm_char2} modulo a certain functional-analytic statement, to which Section \ref{sec_norm_poly_big} is dedicated.
	In Section \ref{sect_gr_jm}, we establish some applications.
	\par 
	\textbf{Notation}. 
	A sequence of numbers (resp. positive numbers) $a_k$, $k \in \nat$, is called \textit{subadditive} (resp. \textit{submultiplicative} or \textit{superadditive}) if $a_{k + l} \leq a_k + a_l$ (resp. $a_{k + l} \leq a_k a_l$ or $a_{k + l} \geq a_k + a_l$) for any $k, l \in \nat$.
	We extend these notions for sequences of functions and for metrics on powers of a line bundle.
	A sequence of positive real numbers $a_k$ is called \textit{subexponential} if for any $\epsilon > 0$, $\exp(- \epsilon k) \leq a_k \leq \exp( \epsilon k)$ for $k$ big enough.
	\par 
	Over $\comp^l$, $l \in \nat^*$, we denote by $\textit{l}_1 = \| \cdot \|_1$ and $\textit{l}_{\infty}= \| \cdot \|_{\infty}$ the norms, defined for $x = (x_1, \cdots, x_l)$ as follows $\| x \|_1 = \sum |x_i|$, $\| x \|_{\infty} = \max |x_i|$.
	By a \textit{seminorm} over a finite dimensional vector space $V$, we mean a non-negative absolutely homogeneous convex function over $V$. 
	By a \textit{pseudonorm} over a finite dimensional vector space $V$, we mean a non-negative absolutely homogeneous continuous function over $V$, which is equal to $0$ only at $0 \in V$.
	Clearly, any pseudonorm defines a dual pseudonorm on $V^*$ by the usual definition. 
	For any pseudonorm $N_V$ over a finite dimensional vector space $V$, one can associate the \textit{convex hull} norm $\Conv{N_V}$ on $V$ in such a way that the unit ball of $\Conv{N_V}$ is the convex hull of the unit ball of $N_V$.
	By a \textit{multiplicative gap} between the pseudonorms $N_1$, $N_2$ on a vector space $V$, we mean the minimal constant $C > 0$, such that both inequalities $N_1 \leq C N_2$ and $N_2 \leq C N_1$ are satisfied.
	\par 
	Recall that a norm $N_V = \| \cdot \|_V$ on a finite dimensional vector space $V$ naturally induces the norm $\| \cdot \|_Q := [N_V]$ on any quotient $Q$, $\pi : V \to Q$ of $V$ as follows
	\begin{equation}\label{eq_defn_quot_norm}
		\| f \|_Q
		:=
		\inf \Big \{
		 \| g \|_V
		 :
		 \quad
		 g \in V, 
		 \pi(g) = f
		\Big\},
		\qquad f \in Q.
	\end{equation}
	\par 
	We denote by $| \cdot |_{h^L}$ the induced pointwise norm on $L$ induced by a metric $h^L$.
	We sometimes denote ${\rm{Ban}}^{\infty}(h^L)$ by ${\rm{Ban}}^{\infty}_{X}(h^L)$ to underline the dependence on the ambient manifold, $X$.
	\par 
	Throughout the whole article $L$ is assumed to be ample and $X$ is a compact complex manifold.	
	\par 
	For $0 \geq a < b$, we will use the following notation $\mathbb{D}_{a, b} = \{ z \in \comp : a < |z| < b \}$, $\mathbb{D}_b = \{ z \in \comp : |z| < b \}$, $\mathbb{D} := \mathbb{D}_1$, $\comp^* := \comp \setminus \{0\}$.
	We also denote by $\pi$ the projection $\pi : X \times \mathbb{D} \to \mathbb{D}$ to the second factor and use the similar notations for all of the above spaces.
	\par 
	For a given function $f$ on a topological space, we denote by $f^*$ (resp. $f_*$) the upper (resp. lower) semi-continuous regularization of $f$. The same notations are used for metrics on line bundles. 
	\par 
	\par 
	\textbf{Acknowledgement}. 
	I would like to thank Hajime Tsuji and Mihai Păun for discussions related to the Narasimhan-Simha pseudonorms during Hayama Symposium 2022 and Oberwolfach workshop \# 2236 respectively, as well as the organizers of these meetings for making the discussions possible.
	I warmly thank Sébastien Boucksom for drawing my attention to many relevant works and for all the stimulating discussions on complex and non-Archimedean pluripotential theory, leading to a significant improvement of the first version of this article.
	I also thank Rémi Reboulet for several useful discussions and Tamás Darvas for his comments and for pointing out \cite{DarvXiaTest}.
	It is, finally, a pleasure to thank Xiaonan Ma for all of his help over the years.
	
	\section{Preliminaries}\label{sect_prel}
	This section is organized as follows.
	In Section \ref{sec_fs_bdem}, we recall the definition of the Fubini-Study metric.
	In Section \ref{sec_klt}, we give an application of Theorem \ref{thm_char} to the asymptotic study of the Narasimhan-Simha pseudonorms.
	In Section \ref{sect_append}, we prove some basic results about the rays of norms on finite dimensional vector spaces.
	In Section \ref{sect_pp_thr}, we recall the basics of pluripotential theory and related quantization results.
	
	\subsection{Fubini-Study metrics associated to pseudonorms on cohomology}\label{sec_fs_bdem}
	
	\begin{sloppypar}
	In this section we recall the definition of the Fubini-Study operator and its positivity properties.
	\par 
	We fix an ample line bundle $L$ over a compact complex manifold $X$.
	For $k \in \nat$ so that $L^{\otimes k}$ is very ample, Fubini-Study operator associates for any norm $N_k = \| \cdot \|_k$ on $H^0(X, L^{\otimes k})$, a continuous metric $FS(N_k)$ on $L$, constructed in the following way.
	Consider the Kodaira embedding 
	\begin{equation}\label{eq_kod}
		{\rm{Kod}}_k : X \hookrightarrow \mathbb{P}(H^0(X, L^{\otimes k})^*),
	\end{equation}
	which embeds $X$ in the space of hyperplanes in $H^0(X, L^{\otimes k})$.
	The evaluation maps provide the isomorphism $
		L^{\otimes (-k)} \to {\rm{Kod}}_k^* \mathscr{O}(-1),
	$
	where $\mathscr{O}(-1)$ is the tautological bundle over $\mathbb{P}(H^0(X, L^{\otimes k})^*)$.
	We endow $H^0(X, L^{\otimes k})^*$ with the dual norm $N_k^*$ and induce from it a metric $h^{FS}(N_k)$ on $\mathscr{O}(-1)$ over $\mathbb{P}(H^0(X, L^{\otimes k})^*)$. 
	We define the metric $FS(N_k)$ on $L^{\otimes k}$ as the only metric verifying under the dual of the above isomorphism the identity
	\begin{equation}\label{eq_fs_defn}
		FS(N_k) = {\rm{Kod}}_k^* ( h^{FS}(N_k)^* ).
	\end{equation}
	Sometimes, by abuse of notation, we denote by $FS(N_k)$ the metric $h^{FS}(N_k)^*$ on $\mathscr{O}(1)$ over $\mathbb{P}(H^0(X, L^{\otimes k})^*)$.
	A statement below can be seen as an alternative definition of $FS(N_k)$.
	\end{sloppypar}
	\begin{lem}\label{lem_fs_inf_d}
		For any $x \in X$, $l \in L^{\otimes k}_x$, the following identity takes place
		\begin{equation}\label{eq_fs_norm}
			|l|_{FS(N_k), x}
			=
			\inf_{\substack{s \in H^0(X, L^{\otimes k}) \\ s(x) = l}}
			\| s \|_k.
		\end{equation}
	\end{lem}
	\begin{proof}
		An easy verification, cf. Ma-Marinescu \cite[Theorem 5.1.3]{MaHol}.
	\end{proof}
	The above construction of the Fubini-Study metric works more generally for pseudonorms $N_k$.
	In this case, since the Fubini-Study operator uses the dual of the pseudonorm and double dual of a pseudonorm equals to its convex hull, we clearly have 
	\begin{equation}\label{eq_fs_conv_hh}
		FS(N_k) = FS(N_k^{**}) = FS(\Conv{N_k}).
	\end{equation}
	\par 
	When the norm $N_k$ comes from a Hermitian product on $H^0(X, L^{\otimes k})$, the Fubini-Study construction is standard and explicit evaluation shows that in this case $c_1(\mathscr{O}(-1) , h^{FS}(N_k))$ coincides up to a negative constant with the Kähler form of the Fubini-Study metric on $\mathbb{P}(H^0(X, L^{\otimes k})^*)$ induced by $N_k$.
	In particular, $c_1(\mathscr{O}(-1) , h^{FS}(N_k))$ is a negative $(1, 1)$-form.
	\par 
	Let us now discuss the positivity properties of the metric $FS(N_k)$ for general pseudonorms $N_k$.
	A pseudonorm $N_V := \| \cdot \|_V$ on a vector space $V$ defines a continuous function $F_V : V \to [0, +\infty[$, $v \mapsto \| v \|_V^2$.
	Following Kobayashi's terminology on Finsler metrics, \cite{KobFinNeg}, we say that $N_V$ is \textit{pseudoconvex} if we have $\imun \partial \dbar F_V \geq 0$ in the sense of currents.
	Clearly, if $N_V$ is a norm, the function $F_V$ is convex by triangle inequality and then $N_V$ is trivially pseudoconvex.
	\par 
	Pseudonorms $N_V$ on $V$ are in one-to-one correspondence with metrics $h^{N_V}$ on the tautological line bundle $\mathscr{O}(-1)$ over $\mathbb{P}(V)$.
	According to {\cite[Theorem 4.1]{KobFinNeg}}, pseudoconvexity of $N_V$ is equivalent to the negativity of the $(1, 1)$-current $c_1(\mathscr{O}(-1), h^{N_V})$.
	In particular, for any norm $N_V$ on $V$, we have $c_1(\mathscr{O}(-1), h^{N_V}) \leq 0$ in the sense of currents.
	Hence, from (\ref{eq_fs_defn}) and (\ref{eq_fs_conv_hh}), the (singular) metric $FS(N_k)$ is psh for any pseudonorm $N_k$ on $H^0(X, L^{\otimes k})$.

	\subsection{Asymptotic study of the Narasimhan-Simha pseudonorms}\label{sec_klt}
	The main goal of this section is describe an application of Theorem \ref{thm_char} to the asymptotic study of the Narasimhan-Simha pseudonorms.
	\par 
	Historically, Narasimhan-Simha pseudonorms have been introduced in \cite{NarSimh} to study moduli problems.
	It is known that for canonically polarised manifolds, the isomorphism type of pseudonormed vector space $(H^0(X, K_X^k), \mathcal{NS}_k)$ for sufficiently big and divisible $k$ determines $X$ up to an isomorphism, see Royden \cite[Theorem 1]{RoydenNS} and Chi \cite[Theorem 1.4]{ChiNSBir}.
	In family setting, the study of positivity of related (pseudo)norms is linked to Iitaka conjecture and invariance of plurigenera problem, see Kawamata \cite{KawamataNS}, Berndtsson-P{\u{a}}un \cite{BerndPaun}, P{\u{a}}un-Takayama \cite{PaunTakayama}.
	See also Amini-Nicolussi \cite{AminiNicol1} and Shivaprasad \cite{ShivaNS} for the study of the Narasimhan-Simha pseudonorms and related objects in singular family setting.
	The main goal of this section is, however, the approximate study of these pseudonorms in the semiclassical limit, i.e. when $k \to \infty$.
	\par 
	We first introduce some notations.
	We say that for a normal crossing divisor $\sum D_i$ on $X$, the $\mathbb{Q}$-divisor $\sum d_i D_i$, $d_i \in \mathbb{Q}$, is klt if for any index $i$, we have $d_i < 1$.
	More generally, a $\mathbb{Q}$-divisor $D$ is klt if for a resolution of singularities $\pi: \tilde{X} \to X$ of $|D|$ and the normal crossing $\mathbb{Q}$-divisor $\tilde{D}$, verifying 
	\begin{equation}
		K_{\tilde{X}} + \tilde{D} = \pi^*(K_X + D),
	\end{equation}
	the pair $(\tilde{X}, \tilde{D})$ is klt.
	This definition doesn't depend on the choice of the resolution $\pi$, cf. \cite[Lemma 3.10]{KollarPairs}.
	\par 
	The klt condition can be restated in the following more differential-geometric way.
	Let $r := r_D \in \nat^*$ be the minimal number such that the element $r D$ is a $\mathbb{Z}$-divisor.
	We denote by $h^{r D}$ the \textit{canonical (singular) metric} on the line bundle $\mathscr{O}_X(r D)$, defined as 
	\begin{equation}
		| s |_{h^{r D}}(x) = 1, \quad \text{for } x \notin |D|,
	\end{equation}
	where $s$ is the canonical (meromorphic) section of $\mathscr{O}_X(r D)$.
	According to \cite[Proposition 3.20]{KollarPairs}, the klt condition is equivalent to the fact that
	\begin{equation}\label{eq_klt_ref_int}
		\Big( \frac{h^{r D}}{h^{r D}_{sm}} \Big)^{\frac{1}{r}} \quad \text{is integrable over } X,
	\end{equation}
	where $h^{r D}_{sm}$ is some (hence, any) smooth metric on $\mathscr{O}_X(r D)$.
	\par 
	Let us now recall that psh metrics on log canonical line bundles of klt pairs give rise to positive integrable volume forms.
	More precisely, assume first that $h^K_0$ is a smooth metric on $K_X$.
	We define the positive volume form $dV_{h^K_0}$ by requiring that for any $x \in X$, we have
	\begin{equation}\label{eq_vol_f_defn_sm}
		dV_{h^K_0}(x) 
		=
		(-\imun)^{n^2 + 2n} dz_1 \wedge \cdots \wedge dz_n \wedge d\overline{z}_1 \wedge \cdots \wedge d\overline{z}_n,
	\end{equation}
	where $| dz_1 \wedge \cdots \wedge dz_n |_{h^K_0}(x) = 1$.
	This construction can be extended to psh metrics $h^K$ on $K_X$ by writing $h^K = e^{- \phi} \cdot h^K_0$ for $\phi \in L^1_{{\rm{loc}}}$ and defining $dV_{h^K} := e^{\phi} \cdot dV_{h^K_0}$.
	Clearly, the result doesn't depend on the choice of $h^K_0$.
	The volume form $dV_{h^K}$ is bounded since any quasi-psh function $\phi$ is bounded. 
	It might, nevertheless, vanish, as $\phi$ is allowed to take $- \infty$ values.
	\par 
	Now, more generally a psh metric $h^{K, D}$ on $K_X^r \otimes \mathscr{O}_X(rD)$ defines a singular metric $h^{K, D}_{sing} = \frac{h^{K, D}}{h^{rD}}$ on $K_X^{r}$. 
	Then $h^{K, D}$ defines a (singular) volume form $dV_{h^{K, D}}$ as $dV_{h^{K, D}} = e^{\phi} \cdot dV_{h^K_0}$, where $\phi$ is so that $h^{K, D}_{sing} = e^{- r \phi} \cdot (h^K_0)^{r}$.
	For $(X, D)$ klt, $dV_{h^{K, D}}$ is integrable by (\ref{eq_klt_ref_int}).
	\begin{sloppypar} 
	\par 
	We now fix a pair $(X, \Delta)$ of a manifold $X$ with a $\mathbb{Q}$-divisor $\Delta$ is klt.
	We denote the log canonical $\mathbb{Q}$-line bundle by $K_X(\Delta) := K_X \otimes \mathscr{O}_X(\Delta)$ and let $r := r_{\Delta} \in \nat^*$ be the minimal number such that $r \Delta$ is a $\mathbb{Z}$-divisor.
	Any section $f \in H^0(X, K_X(\Delta)^{kr})$, $k \in \nat^*$, can be then interpreted as a meromorphic section of $K_X^{kr}$.
	The klt condition implies, see (\ref{eq_klt_ref_int}) and after, that the integrand in (\ref{eq_ns_defn}) is finite.
	We denote by $\mathcal{NS}_{kr}^{\Delta} := \| \cdot \|^{\mathcal{NS}, \Delta}_{kr}$ the pseudonorm on $H^0(X, K_X(\Delta)^{kr})$, given by this integral.
	Over \textit{log canonical ring}, $R(X, K_X(\Delta)^r)$, we define the Narasimhan-Simha graded pseudonorm
	\begin{equation}
		\mathcal{NS}^{\Delta} := \sum_{k = 1}^{\infty} \mathcal{NS}_{kr}^{\Delta}.
	\end{equation}
	\par 
	Now, any psh metric $h^{K, \Delta}$ on $K_X(\Delta)^{r}$ induces a volume form (with singularities) on $X$, denoted by $dV_{h^{K, \Delta}}$, see (\ref{eq_vol_f_defn_sm}) for details. 
	If the pair $(X, \Delta)$ is klt, $dV_{h^{K, \Delta}}$ is of finite volume, see (\ref{eq_klt_ref_int}) and after.
	Recall that Tsuji in \cite{TsuCanKah} defined the supercanonical metric $h^{K, \Delta}_{{\rm{can}}}$ on $K_X(\Delta)^{r}$ over klt pairs $(X, \Delta)$ with psef $K_X(\Delta)^r$ through the following envelope construction: for $x \in X$, we let
	\begin{equation}\label{defn_tsu_m}
		h^{K, \Delta}_{{\rm{can}}}(x)
		=
		\inf \Big\{ 
			h^{K, \Delta}(x) : \quad h^{K, \Delta} \text{ is a psh metric on $K_X(\Delta)^{r}$, with } \int_X dV_{h^{K, \Delta}} \leq 1
		\Big\}.
	\end{equation}
	\par 
	Recall that a pair $(X, \Delta)$ is called \textit{log canonically polarised} if $K_X(\Delta)^{r}$ is ample.
	The main result of this section goes as follows.
	\begin{thm}\label{thm_ns_conv}
		For log canonically polarised klt pairs $(X, \Delta)$, the following equivalence of graded norms on the log canonical ring $R(X, K_X(\Delta)^{r})$ holds 
		\begin{equation}
			\Conv{\mathcal{NS}^{\Delta}} \sim {\rm{Ban}}^{\infty}(h^{K, \Delta}_{{\rm{can}}}).
		\end{equation}
	\end{thm}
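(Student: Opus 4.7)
The plan is to apply Theorem \ref{thm_char} to the graded norm $N := \Conv{\mathcal{NS}^{\Delta}}$ on the log canonical ring $R(X, K_X(\Delta)^r)$. Since the pair $(X, \Delta)$ is log canonically polarised, $K_X(\Delta)^r$ is ample, placing us in the setting of Theorem \ref{thm_char}. Two conditions remain to be checked: that $N$ is submultiplicative, and that $FS(N)$ is continuous and equals $h^{K, \Delta}_{{\rm can}}$.

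For the submultiplicativity of $N$, I would first establish it at the level of the pseudonorms $\mathcal{NS}^{\Delta}_{kr}$. For $f \in H^0(X, K_X(\Delta)^{kr})$ and $g \in H^0(X, K_X(\Delta)^{\ell r})$, the volume form $\alpha_f := ((-\imun)^{kr(n^2 + 2n)} f \wedge \bar f)^{1/kr}$, well-defined away from the support of $\Delta$ and of finite total integral by the klt condition, satisfies the pointwise multiplicative identity $\alpha_{fg} = \alpha_f^{k/(k+\ell)} \cdot \alpha_g^{\ell/(k+\ell)}$. Hölder's inequality applied to $\int_X \alpha_{fg}$ yields $\|fg\|^{\mathcal{NS}, \Delta}_{(k+\ell)r} \leq \|f\|^{\mathcal{NS}, \Delta}_{kr} \cdot \|g\|^{\mathcal{NS}, \Delta}_{\ell r}$. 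Submultiplicativity then transfers to the convex hull by bilinearity of multiplication: a convex combination $f = \sum_i a_i f_i$ multiplied with $g = \sum_j b_j g_j$ gives $fg = \sum_{i, j} a_i b_j \, f_i g_j$, exhibiting $fg$ as a convex combination of products in the unit ball at level $(k + \ell)r$.

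For the identification of $FS(N)$, I observe first that since pointwise evaluation $f \mapsto |f(x)|$ is a seminorm in $f$ (hence convex), the Fubini-Study construction depends only on the unit ball of $\mathcal{NS}^{\Delta}$ and not on that of its convex hull, so that
\[
	FS(\Conv{\mathcal{NS}^{\Delta}_{kr}})(x) = \sup\bigl\{ |f(x)|^2 : \|f\|^{\mathcal{NS}, \Delta}_{kr} \leq 1 \bigr\}.
\]
For each admissible $f$, the singular psh metric $|f|^{-2/k}$ on $K_X(\Delta)^r$ has unit volume and hence competes in Tsuji's envelope (\ref{defn_tsu_m}), yielding $FS(\Conv{\mathcal{NS}^{\Delta}_{kr}})^{1/kr} \leq h^{K, \Delta}_{{\rm can}}$ pointwise. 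The reverse asymptotic inequality, together with the continuity of $h^{K, \Delta}_{{\rm can}}$, is precisely the content of \cite[Theorem 5.5]{BerDem}: pluricanonical sections asymptotically saturate the supercanonical envelope. Combining these gives $FS(N) = h^{K, \Delta}_{{\rm can}}$, which is continuous.

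Both hypotheses of Theorem \ref{thm_char} being verified, the claimed equivalence $\Conv{\mathcal{NS}^{\Delta}} \sim {\rm{Ban}}^{\infty}(h^{K, \Delta}_{{\rm can}})$ follows immediately. The substantive obstacle is external: the deep continuity and saturation statement of \cite[Theorem 5.5]{BerDem}. Granted this input, the remainder of the argument — the Hölder submultiplicativity, its passage to the convex hull, and the one elementary envelope comparison — is routine, consistent with the description of this application as an ``easy consequence'' in the text preceding the theorem.
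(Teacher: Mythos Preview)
Your proposal is correct and follows essentially the same route as the paper: Hölder gives submultiplicativity of $\mathcal{NS}^{\Delta}$, Lemma~\ref{lem_conv_subm} (which you re-derive) passes this to the convex hull, equation~(\ref{eq_fs_conv_hh}) identifies $FS(\Conv{\mathcal{NS}^{\Delta}})$ with $FS(\mathcal{NS}^{\Delta})$, and then Berman--Demailly's convergence result (recorded here as Theorem~\ref{thm_berm_dem}) together with the continuity of $h^{K,\Delta}_{\rm can}$ feeds into Theorem~\ref{thm_char}. One minor point: the saturation statement you need is \cite[Proposition~5.19 and Remark~5.23]{BerDem} rather than Theorem~5.5 (which gives only continuity), and the correct normalization is $FS(\mathcal{NS}^{\Delta}_{kr})^{1/k}$ as a metric on $K_X(\Delta)^r$, not $^{1/kr}$.
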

	\begin{rem}
		Taking convex hull is necessary. In fact, for any $f \in H^0(X, K_X(\Delta)^{kr})$, $k \in \nat^*$, both the Narasimhan-Simha pseudonorms and sup-norms behave multiplicatively on the sequence $f^l$ for $l \in \nat^*$.
		Hence, if the statement would hold without taking the convex hull, it would imply that the Narasimhan-Simha pseudonorms coincide identically with the sup-norm, which is false.
	\end{rem}
	\par 
	Let us now recall a related result about the convergence of Fubini-Study metrics associated with the Narasimhan-Simha pseudonorms.
	Below we use the notations from Section \ref{sect_intro}.
	\end{sloppypar} 
	\begin{thm}[{Berman-Demailly \cite[Proposition 5.19 and Remark 5.23]{BerDem} }]\label{thm_berm_dem}
		For a log canonically polarised klt pair $(X, \Delta)$, the sequence of metrics $FS(\mathcal{NS}_{r k}^{\Delta})^{\frac{1}{k}}$ on $K_X(\Delta)^{r}$ converges uniformly, as $k \to \infty$, to the supercanonical metric, $h^{K, \Delta}_{{\rm{can}}}$.
	\end{thm}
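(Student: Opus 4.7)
The plan is to prove matching pointwise inequalities between the local potentials of $FS(\mathcal{NS}^{\Delta}_{rk})^{1/k}$ and of $h^{K,\Delta}_{{\rm{can}}}$, with a multiplicative error term that vanishes uniformly in $x$ as $k \to \infty$; uniform convergence of metrics then follows without any additional compactness argument.

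The central identity is that for any $s \in H^0(X, K_X(\Delta)^{rk})$, the psh metric $h_s$ on $K_X(\Delta)^{r}$ with local potential $\frac{1}{rk}\log|s|^2$ satisfies, up to a purely dimensional constant,
\[
\bigl(\|s\|^{\mathcal{NS},\Delta}_{rk}\bigr)^{1/k} = \int_X dV_{h_s}.
\]
This follows directly from (\ref{eq_ns_defn}) and the construction of $dV_{h^{K,\Delta}}$ indicated after (\ref{eq_vol_f_defn_sm}). The lower bound $FS(\mathcal{NS}^{\Delta}_{rk})^{1/k}(x) \geq h^{K,\Delta}_{{\rm{can}}}(x)$ is then tautological: after rescaling $s$ so that $\int_X dV_{h_s} \leq 1$, the metric $h_s$ is admissible in the envelope (\ref{defn_tsu_m}), whence $h^{K,\Delta}_{{\rm{can}}}(x) \leq h_s(x)$. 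Taking the infimum over $s$ with $s(x) = l^{\otimes k}$ and invoking Lemma \ref{lem_fs_inf_d} gives the claim.

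For the matching upper bound, I would apply an Ohsawa-Takegoshi-type extension theorem with respect to the continuous psh metric $h^{K,\Delta}_{{\rm{can}}}$: given $x \in X$ and $l \in K_X(\Delta)^{r}_x$, extend $l^{\otimes k}$ to a section $s_k \in H^0(X, K_X(\Delta)^{rk})$ satisfying
\[
\int_X |s_k|^2_{(h^{K,\Delta}_{{\rm{can}}})^k}\, dV_{h^{K,\Delta}_{{\rm{can}}}} \leq C \cdot |l|^{2k}_{h^{K,\Delta}_{{\rm{can}}}},
\]
with $C$ depending only on $(X, \Delta)$ and uniform in $x$, $l$, $k$. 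Jensen's inequality applied to the concave map $t \mapsto t^{1/k}$ together with the volume normalization $\int_X dV_{h^{K,\Delta}_{{\rm{can}}}} \leq 1$ converts the $L^2$-bound into an $L^{2/k}$-bound,
\[
\bigl(\|s_k\|^{\mathcal{NS},\Delta}_{rk}\bigr)^{1/k} = \int_X |s_k|^{2/k}_{(h^{K,\Delta}_{{\rm{can}}})^k}\, dV_{h^{K,\Delta}_{{\rm{can}}}} \leq C^{1/k} \cdot |l|^{2}_{h^{K,\Delta}_{{\rm{can}}}},
\]
and $C^{1/k} \to 1$ uniformly in $x$; Lemma \ref{lem_fs_inf_d} then yields the matching inequality.

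The principal obstacle is the applicability of Ohsawa-Takegoshi when $h^{K,\Delta}_{{\rm{can}}}$ is merely continuous (the continuity being established in \cite[Theorem 5.5]{BerDem}) and when the klt divisor $\Delta$ contributes singularities to the volume form and poles to the meromorphic sections. I would bypass these issues by regularizing $h^{K,\Delta}_{{\rm{can}}}$ from below by smooth positive psh metrics via Demailly's regularization theorem, applying Ohsawa-Takegoshi at each level with constants uniform in the regularization parameter (ensured by the klt integrability (\ref{eq_klt_ref_int})), and passing to the limit.
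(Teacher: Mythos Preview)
Your proposal is correct and is precisely the Berman--Demailly argument that the paper cites; the paper does not reprove it but merely translates \cite[Proposition 5.19 and Remark 5.23]{BerDem} into its notation, and your two-sided estimate via the tautological envelope inequality plus Ohsawa--Takegoshi combined with Jensen is exactly their proof.

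One caution: your invocation of \cite[Theorem 5.5]{BerDem} for the continuity of $h^{K,\Delta}_{{\rm{can}}}$ is circular, since in \cite{BerDem} continuity is \emph{deduced from} the very uniform approximation you are proving. You correctly flag this and propose regularization, which is the right fix; but note that Ohsawa--Takegoshi does not actually require continuity of the weight --- it holds for arbitrary psh metrics --- so the genuine technical point is rather to secure strict positivity of the curvature, which one does in the standard way by writing $K_X(\Delta)^{rk} = K_X(\Delta)^{r(k-k_0)} \otimes K_X(\Delta)^{rk_0}$ and placing a fixed smooth positive metric on the second factor. Also, rather than applying Ohsawa--Takegoshi to $h^{K,\Delta}_{{\rm{can}}}$ itself (which presupposes that the envelope is realized by an admissible competitor), it is cleaner --- and this is what \cite{BerDem} do --- to apply it to an \emph{arbitrary} competitor $h$ in (\ref{defn_tsu_m}), obtain the bound $h(x) \le e^{C/k} FS(\mathcal{NS}^{\Delta}_{rk})^{1/k}(x)$, and then take the infimum over $h$.
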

	\begin{rem}
		In \cite{BerDem}, authors assume that $\Delta$ is effective, but this is never used in the proof.
	\end{rem}
	Clearly, from Theorem \ref{thm_bouck_erikss} and (\ref{eq_fs_conv_hh}), Theorem \ref{thm_ns_conv} refines Theorem \ref{thm_berm_dem}.
	Let us now prove Theorem \ref{thm_ns_conv}.
	For this, we first need the following basic fact.
	\begin{lem}\label{lem_conv_subm}
		For any graded submultiplicative pseudonorm $N$, the associate convex hull norm, $\Conv{N}$, is submultiplicative.
	\end{lem}
	\begin{proof}
		It follows directly from the fact that the convex hull norm $\| \cdot \|_V^{\conv}$ of a pseudonorm $\| \cdot \|_V$ can be described as $
			\| v \|_V^{\conv} = \inf \big\{ 
			\sum \| v_i \|_V : \sum v_i = v
			\big\}$.
	\end{proof}
	\begin{sloppypar}
	\begin{proof}[Proof of Theorem \ref{thm_ns_conv}.]
		By Hölder's inequality, $\mathcal{NS}^{\Delta}$ is submultiplicative.
		By the discussion before Theorem \ref{thm_ns_conv}, $h^{K, \Delta}_{{\rm{can}}}$ is continuous and non-null.
		Theorem \ref{thm_ns_conv} now follows from this, Theorems \ref{thm_char}, \ref{thm_berm_dem}, Lemma \ref{lem_conv_subm} and (\ref{eq_fs_conv_hh}).
	\end{proof}
	\end{sloppypar}

	\subsection{Rays of norms and logarithmic relative spectrum}\label{sect_append}
	The main goal of this section is to recall some basic results from the theory of finite dimensional normed vector spaces, emphasizing related metric structures.
	We also give two constructions of rays of norms on a finite dimensional vector space endowed with a filtration and compare them.
	\par 
	Let $N_i = \| \cdot \|_i$, $i = 1, 2$, be two norms on a finite dimensional vector space $V$.
	We define the \textit{logarithmic relative spectrum} of $N_1$ with respect to $N_2$ as a non-increasing sequence $\lambda_j := \lambda_j(N_1, N_2)$, $j = 1, \cdots, \dim V$, defined as follows
	\begin{equation}\label{eq_log_rel_spec}
		\lambda_j
		:=
		\sup_{\substack{W \subset V \\ \dim W = j}} 
		\inf_{w \in W \setminus \{0\}} \log \frac{\| w \|_2}{\| w \|_1}.
	\end{equation}
	Directly from (\ref{eq_log_rel_spec}), for any norms $N_1, N_2, N_3$, $j = 1, \cdots, \dim V$, we have
	\begin{equation}\label{eq_log_rel_spec02}
		\lambda_j(N_1, N_2) + \lambda_{\dim V}(N_2, N_3) \leq \lambda_j(N_1, N_3) \leq \lambda_j(N_1, N_2) + \lambda_1(N_2, N_3),
	\end{equation}
	and whenever $N_1 \leq N_2 \leq N_3$, we have
	\begin{equation}\label{eq_log_rel_spec2}
		\lambda_j(N_1, N_2) \leq \lambda_j(N_1, N_3).
	\end{equation}
	\par 
	When both $N_i$, $i = 1, 2$, are Hermitian norms associated with the scalar products $\langle \cdot, \cdot \rangle_i$, the logarithmic relative spectrum coincides with the logarithm of the spectrum of the transfer map $A \in \enmr{V}$ between $N_1$ and $N_2$, which is the Hermitian operator, verifying $\langle A \cdot, \cdot \rangle_1 = \langle \cdot, \cdot \rangle_2$.
	\par 
	By \cite[Theorem 3.1]{BouckErik21}, the functions $d_p$, $p \in [1, +\infty[$, defined in (\ref{eq_dp_defn_norms}), are such that
	\begin{equation}\label{eq_triangle}
		\text{$d_p$ satisfies the triangle inequality over the space of Hermitian norms.}
	\end{equation}
	Remark also that the John ellipsoid theorem, cf. \cite[p. 27]{PisierBook}, says that for any normed vector space $(V, N_V)$, there is a Hermitian norm $N_V^H$ on $V$, verifying 
	\begin{equation}\label{eq_john_ellips}
		N_V^H \leq N_V \leq \sqrt{\dim V} \cdot N_V^H.
	\end{equation}
	From (\ref{eq_log_rel_spec02}), (\ref{eq_log_rel_spec2}), (\ref{eq_triangle}), (\ref{eq_john_ellips}) and the fact that $\dim H^0(X, L^{\otimes k})$ grow polynomially (hence, subexponentially) in $k \in \nat$, we see that $\sim_p$ for $p \in [1, +\infty[$ is indeed an equivalence relation.
	\par 
	From (\ref{eq_log_rel_spec}), $\max \{ |\lambda_1|, |\lambda_{\dim V}| \} = d_{+ \infty}(N_1, N_2)$ is the minimal constant $C \geq 0$, verifying
	\begin{equation}
		\exp(- C) \cdot N_2 \leq N_1 \leq \exp(C) \cdot N_2.
	\end{equation}
	This clearly proves that $\sim_{+ \infty}$ coincides with the equivalence relation $\sim$.
	\par 
	Recall that in a metric space $(\mathcal{M}, d)$, a curve $\gamma_t \in \mathcal{M}$, $t \in [0, 1]$, is called a geodesic if there is $v \geq 0$, verifying $d(\gamma_a, \gamma_b) = v |a - b|$ for any $a, b \in [0, 1]$.
	\par 
	It is possible to prove that the space of Hermitian norms on $H^0(X, L^{\otimes k})$, endowed with the distance $d_2$, is isometric to the space $SL(\dim H^0(X, L^{\otimes k})) / SU(\dim H^0(X, L^{\otimes k}))$, endowed with the distance coming from the standard $SL(\dim H^0(X, L^{\otimes k}))$-invariant metric, cf. \cite[Theorem 1.1]{DarvLuRub}.
	The later space is known to be of non-positive sectional curvature, see \cite[Theorem XI.8.6]{KobNom2}, and contractible (by Cartan decomposition).
	In particular, by Cartan-Hadamard theorem, it is uniquely geodesic.
	This goes in a sharp contrast with the distance $d_1$, as we see below.
	\par 
	\begin{lem}\label{lem_d_1_ident_metr}
		Assume that Hermitian norms $H_1, H_2, H_3$ on a vector space $V$ are ordered as follows $H_1 \leq H_2 \leq H_3$.
		Then the following identity holds
		\begin{equation}\label{eq_d_1_ident_metr}
			d_1(H_1, H_2) + d_1(H_2, H_3) = d_1(H_1, H_3).
		\end{equation}
	\end{lem}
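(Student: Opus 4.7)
The plan is to reduce the identity to the fact that, for Hermitian norms, the sum of the logarithmic relative spectrum is essentially a log-determinant, and log-determinants are additive along any chain by multiplicativity of the determinant.

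First, I would use the hypothesis $H_1 \leq H_2 \leq H_3$ to get rid of absolute values. Indeed, $H_1 \leq H_2$ means $\|w\|_2 / \|w\|_1 \geq 1$ for all nonzero $w \in V$, hence by definition \eqref{eq_log_rel_spec} every $\lambda_j(H_1, H_2) \geq 0$; the same holds for the other two pairs. Writing $r = \dim V$, this reduces \eqref{eq_d_1_ident_metr} to the purely additive statement
\begin{equation}
	\sum_{j = 1}^{r} \lambda_j(H_1, H_2) + \sum_{j = 1}^{r} \lambda_j(H_2, H_3) = \sum_{j = 1}^{r} \lambda_j(H_1, H_3).
\end{equation}

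Next, I would evaluate each of these sums. Fixing an arbitrary basis of $V$ and denoting by $A, B, C$ the positive-definite Hermitian Gram matrices of $H_1, H_2, H_3$, one has $\|w\|_i^2 = w^* \cdot (\text{matrix}) \cdot w$, so the ratio $\|w\|_2^2 / \|w\|_1^2$ is a generalized Rayleigh quotient for the pencil $(B, A)$. By the generalized Courant--Fischer principle, the numbers $\exp(2\lambda_j(H_1, H_2))$ coincide with the eigenvalues of $A^{-1} B$. Taking logarithms and summing gives the key formula
\begin{equation}\label{eq_sum_lambda_det}
	\sum_{j = 1}^{r} \lambda_j(H_1, H_2) = \tfrac{1}{2} \log \det(A^{-1} B) = \tfrac{1}{2} \bigl( \log \det B - \log \det A \bigr),
\end{equation}
and the analogous identities for the pairs $(H_2, H_3)$ and $(H_1, H_3)$.

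The three identities of the form \eqref{eq_sum_lambda_det}, taken together, telescope: the contributions of $\log \det B$ cancel, yielding the desired additivity of $\sum \lambda_j$. Dividing by $r$ and invoking the sign information from the first paragraph produces \eqref{eq_d_1_ident_metr}. There is no serious obstacle here; the only point that needs care is the appeal to generalized Courant--Fischer (or equivalently, simultaneous diagonalization of $A$ and $B$ by a basis that is $H_1$-orthonormal and $H_2$-orthogonal), which ensures that the min-max definition \eqref{eq_log_rel_spec} does deliver the eigenvalues of $A^{-1}B$ rather than merely bounding them.
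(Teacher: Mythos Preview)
Your proof is correct and follows essentially the same approach as the paper's: both reduce the identity to the multiplicativity of the determinant by identifying $d_1$ between ordered Hermitian norms with a log-determinant of the relevant transition/Gram matrix. The only cosmetic differences are that you work with Gram matrices and invoke Courant--Fischer explicitly, while the paper speaks of ``transition matrices'' and appeals directly to diagonalizability; the telescoping $\det(A)\cdot\det(A^{-1}B)=\det(B)$ is the same in both.
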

	\begin{rem}
		Hence, a concatenation of geodesics between $H_1, H_2$ and $H_2, H_3$, ordered as in Lemma \ref{lem_d_1_ident_metr}, is a geodesic.
	\end{rem}
	\begin{proof}
		The result follows from (\ref{eq_triangle}) and Lidskii’s inequality, cf. \cite[Theorem 5.1]{DarvLuRub}.
	\end{proof}
	\par 
	For any $p \in [1, +\infty]$ and graded norms $N = \sum N_k$, $N' = \sum N'_k$ on a section ring $R(X, L)$ of an ample line bundle, we define
	\begin{equation}\label{eq_dp_gr_n_def}
		d_p(N, N')
		:=
		\limsup_{k \to \infty} \frac{1}{k} d_p(N_k, N'_k).
	\end{equation}
	From (\ref{eq_log_rel_spec02}), (\ref{eq_log_rel_spec2}), (\ref{eq_triangle}), (\ref{eq_john_ellips}) and the fact that $\dim H^0(X, L^{\otimes k})$ grow polynomially (hence, subexponentially) in $k \in \nat$, $d_p$ is symmetric and satisfies the triangle inequality.
	\par 
	We say now that graded norms $N, N'$ are \textit{in the same asymptotic class} if $d_1(N, N') < +\infty$ and in (\ref{eq_dp_gr_n_def}), one can put $\lim$ instead of $\limsup$ for $p = 1$.
	\begin{lem}\label{lem_d_1_ident_metr_gr}
		Assume that graded norms $N_1, N_2, N_3$ on $R(X, L)$ are in the same asymptotic class and they are ordered as $N_1 \leq N_2 \leq N_3$.
		Then the following identity holds
		\begin{equation}\label{eq_d_1_ident_metr_gr}
			d_1(N_1, N_2) + d_1(N_2, N_3) = d_1(N_1, N_3).
		\end{equation}
	\end{lem}
	\begin{sloppypar}
	\begin{proof}
		We write $N_i = \sum N_{i, k}$ for $i = 1, 2, 3$, and denote by $N_{i, k}^H$ the Hermitian norm on $H^0(X, L^{\otimes k})$ associated to $N_{i, k}$ as in (\ref{eq_john_ellips}).
		Then for the graded Hermitian norms $N_1^H := \sum \frac{1}{\dim H^0(X, L^{\otimes k})} N_{1, k}^H$, $N_2^H := \sum N_{2, k}^H$, $N_3^H := \sum \dim H^0(X, L^{\otimes k}) \cdot N_{3, k}^H$, by (\ref{eq_log_rel_spec2}), (\ref{eq_john_ellips}) and the fact that $\dim H^0(X, L^{\otimes k})$ grow polynomially (hence, subexponentially) in $k \in \nat$, we have $N_i^H \sim N_i$ for any $i = 1, 2, 3$.
		By (\ref{eq_log_rel_spec02}), we deduce $| d_1(N_{i, k}, N_{j, k}) - d_1(N_{i, k}^H, N_{j, k}^H) | \leq 4 \log \dim H^0(X, L^{\otimes k})$ 
		So we have $d_1(N_i, N_j) = d_1(N_i^H, N_j^H)$ for any $i, j = 1, 2, 3$ and conclude that $N_1^H, N_2^H, N_3^H$ are in the same asymptotic class.
		The proof of Lemma \ref{eq_d_1_ident_metr_gr} follows from this, Lemma \ref{lem_d_1_ident_metr} and the fact that by (\ref{eq_john_ellips}), our norms are ordered as $N_1^H \leq N_2^H \leq N_3^H$.
	\end{proof}
	\end{sloppypar}
	\par 
	Let us now discuss two different constructions of rays of norms associated to a filtration.
	We fix a finitely-dimensional normed vector space $(V, N_V)$, $\| \cdot \|_V := N_V$ and a filtration $\mathcal{F}$ of $V$.
	We construct a ray of norms $N_{V, \mathcal{F}}^t := \| \cdot \|_{V, \mathcal{F}}^t$, $t \in [0, +\infty[$, emanating from $N_V$, as follows
	\begin{equation}\label{eq_ray_norm_defn0}
		\| f \|_{V, \mathcal{F}}^t :=
		\inf 
		\Big\{
			\sum
			e^{- t \mu_i} \cdot 
			\| f_i \|_V
			\,
			:
			\,
			f = \sum f_i, \, f_i \in \mathcal{F}^{\mu_i} V
		\Big\}.
	\end{equation}
	\begin{lem}\label{lem_log_spec_ray}
		For any $t \geq s \geq 0$ and $i = 1, \ldots, \dim V$, the following estimate holds
		\begin{equation}
			\Big| \lambda_i(N_{V, \mathcal{F}}^t, N_{V, \mathcal{F}}^s) - (t - s) \cdot e_{\mathcal{F}}(i) \Big|
			\leq
			\log \dim V,
		\end{equation}
		where $e_{\mathcal{F}}(i)$ are the jumping numbers of the filtration $\mathcal{F}$, defined as
		\begin{equation}\label{eq_defn_jump_numborig}
			e_{\mathcal{F}}(i) := \sup \Big\{ t \in \real : \dim \mathcal{F}^t H^0(X, L^{\otimes k}) \geq i \Big\}.
		\end{equation}
	\end{lem}
	\begin{proof}
		Let us first show that for any $t \geq s \geq 0$, $f \in V$ the following identity holds
		\begin{equation}\label{eq_ray_norm_defn2}
			\| f \|_{V, \mathcal{F}}^t =
			\inf 
			\Big\{
				\sum
				e^{- (t - s) \mu_i} \cdot 
				\| f_i \|_{V, \mathcal{F}}^s
				\,
				:
				\,
				f = \sum f_i, \, f_i \in \mathcal{F}^{\mu_i} V
			\Big\}.
		\end{equation}
		Clearly, once (\ref{eq_ray_norm_defn2}) is established, it would be enough to prove Lemma \ref{lem_log_spec_ray} for $s = 0$.
		\par 
		First of all, the inequality 
		\begin{equation}\label{eq_ray_norm_defn2aa1}
			\| f \|_{V, \mathcal{F}}^t \geq
			\inf 
			\Big\{
				\sum
				e^{- (t - s) \mu_i} \cdot 
				\| f_i \|_{V, \mathcal{F}}^s
				\,
				:
				\,
				f = \sum f_i, \, f_i \in \mathcal{F}^{\mu_i} V
			\Big\},
		\end{equation}
		follows directly from the bound $\| f_i \|_{V, \mathcal{F}}^s \leq e^{- s \mu_i} \cdot \| f_i \|_V$ and (\ref{eq_ray_norm_defn0}).
		\par 
		To establish the inverse inequality, we first remark that it is clear (by the use of projection operator) that if $f  \in \mathcal{F}^{\mu} V$, $\mu \in \real$, then in (\ref{eq_ray_norm_defn0}), it is sufficient to consider the decompositions with $f_i \in \mathcal{F}^{\mu_i} V$, $\mu_i \geq \mu$.
		For any $\epsilon > 0$, we now consider a decomposition $f = \sum f_i^{\epsilon}$, $f_i^{\epsilon} \in \mathcal{F}^{\mu_i} V$, verifying
		\begin{equation}
			\sum_{i}
				e^{- (t - s) \mu_i} \cdot 
				\| f_i^{\epsilon} \|_{V, \mathcal{F}}^s
			\leq
			\inf 
			\Big\{
				\sum_{i}
				e^{- (t - s) \mu_i} \cdot 
				\| f_i \|_{V, \mathcal{F}}^s
				\,
				:
				\,
				f = \sum f_i, \, f_i \in \mathcal{F}^{\mu_i} V
			\Big\}
			+
			\epsilon.
		\end{equation}
		We also consider decompositions $f_i^{\epsilon} = \sum f_{i, j}^{\epsilon}$, $f_{i, j}^{\epsilon} \in \mathcal{F}^{\mu_{i, j}} V$, $\mu_{i, j} \geq \mu_i$, verifying 
		\begin{equation}
			\sum_{j}
				e^{- s \mu_{i, j}} \cdot 
				\| f_{i, j}^{\epsilon} \|_V
			\leq
			\| f_i^{\epsilon} \|_{V, \mathcal{F}}^s
			+
			\epsilon.
		\end{equation}
		Clearly, since $\mu_{i, j} \geq \mu_i$, we then have
		\begin{equation}
			\sum_{i, j}
				e^{- (t - s) \mu_i} 
				\cdot
				e^{- s \mu_{i, j}} \cdot 
				\| f_{i, j}^{\epsilon} \|_V
			\geq
			\sum_{i, j}
				e^{- t  \mu_{i, j}} 
				\cdot 
				\| f_{i, j}^{\epsilon} \|_V
			\geq 
			\| f \|_{V, \mathcal{F}}^t,
		\end{equation}
		which, along with the above estimates, imply the inverse inequality to (\ref{eq_ray_norm_defn2aa1}), when $\epsilon \to 0$.
		\par 
		Let assume first that $N_V$ is Hermitian. We then show the following stronger statement
		\begin{equation}\label{eq_spec_herm}
			\lambda_i(N_{V, \mathcal{F}}^t, N_V) = t \cdot e_{\mathcal{F}}(i).
		\end{equation}
		For simplicity of the presentation, we assume that the numbers $e_{\mathcal{F}}(i)$ are different.
		Remark that if $x \in V$ is orthogonal (with respect to the scalar product associated with $N_V$) to $\mathcal{F}^{e_{\mathcal{F}}(i - 1)} V$, then $\| x \|_{V, \mathcal{F}}^t \geq \| x \|_V \cdot \exp(- t e_{\mathcal{F}}(i))$.
		In particular, since in any $i$-dimensional subspace of $V$, there is an element orthogonal to the $i - 1$-dimensional subspace $\mathcal{F}^{e_{\mathcal{F}}(i - 1)} V$, we have $\lambda_i(N_{V, \mathcal{F}}^t, N_V) \leq t  \cdot e_{\mathcal{F}}(i)$.
		However, by taking a subspace $W := \mathcal{F}^{e_{\mathcal{F}}(i)} V$ in (\ref{eq_log_rel_spec}), we obtain $\lambda_i(N_{V, \mathcal{F}}^t, N_V) \geq t \cdot e_{\mathcal{F}}(i)$.
		A combination of the two estimates imply (\ref{eq_spec_herm}).
		\par 
		Now, for general norms $N_V$, we consider the norm $N_V^H$ as in (\ref{eq_john_ellips}).
		Let us denote by $N_{V, \mathcal{F}}^{H, t}$ the ray of norms emanating from $N_V^H$ as in (\ref{eq_ray_norm_defn0}).
		Clearly, by (\ref{eq_john_ellips}), for any $t \in [0, +\infty[$, we have 
		\begin{equation}
			d_{+ \infty}(N_{V, \mathcal{F}}^{H, t}, N_{V, \mathcal{F}}^t) \leq \frac{1}{2} \log \dim V,
			\qquad
			d_{+ \infty}(N_V, N_V^H) \leq \frac{1}{2} \log \dim V.
		\end{equation}
		By (\ref{eq_log_rel_spec02}) and (\ref{eq_spec_herm}), we conclude the proof of Lemma \ref{lem_log_spec_ray}.
	\end{proof}
	We now fix a \textit{Hermitian} norm $N_H := \| \cdot \|_H$ on $V$. 
	Let us give an alternative construction of a ray of norms emanating from $N_H$.
	Consider an orthonormal basis $s_1, \ldots, s_r$, $r := \dim V$, of $V$, adapted to the filtration $\mathcal{F}$, i.e. verifying $s_i \in \mathcal{F}^{e_{\mathcal{F}}(i)} V$.
	We define the ray of Hermitian norms $N_{H, \mathcal{F}}^{\perp, t} := \| \cdot \|_{H, \mathcal{F}}^{\perp, t}$, $t \in [0, +\infty[$, on $V$ by declaring the basis 
	\begin{equation}\label{eq_bas_st}
		(s_1^t, \ldots, s_r^t) := \big( e^{t e_{\mathcal{F}}(1)} s_1, \ldots, e^{t e_{\mathcal{F}}(r)} s_r \big),
	\end{equation}
	to be orthonormal with respect to $N_{H, \mathcal{F}}^{\perp, t}$.
	The following result compares the two constructions.
	\begin{lem}\label{lem_two_norms_comp0}
		For any (resp. Hermitian) norm $N_V$ (resp. $N_H$) on $V$ and any $t \in [0, +\infty[$, we have
		\begin{equation}
			d_{+ \infty}(N_{H, \mathcal{F}}^{\perp, t}, N_{V, \mathcal{F}}^t)
			\leq
			d_{+ \infty}(N_H, N_V)
			+
			\log \dim V.
		\end{equation}
	\end{lem}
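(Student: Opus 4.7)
The plan is to compare both rays of norms with a common intermediate: the Construction 1 ray $N_{H,\mathcal{F}}^{t}$ emanating from the Hermitian norm $N_H$, and then apply the triangle inequality for $d_{+\infty}$ (valid by the spectral interpretation given after (\ref{eq_dp_defn_norms})). So the estimate splits as
\begin{equation*}
d_{+\infty}\bigl(N_{H,\mathcal{F}}^{\perp,t}, N_{V,\mathcal{F}}^{t}\bigr)
\leq
d_{+\infty}\bigl(N_{V,\mathcal{F}}^{t}, N_{H,\mathcal{F}}^{t}\bigr)
+
d_{+\infty}\bigl(N_{H,\mathcal{F}}^{t}, N_{H,\mathcal{F}}^{\perp,t}\bigr),
\end{equation*}
and I aim to bound the first summand by $d_{+\infty}(N_V,N_H)$ and the second by $\log \dim V$.

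For the first summand, I would observe that the formula (\ref{eq_ray_norm_defn0}) is manifestly monotone in the initial norm: setting $C := d_{+\infty}(N_V,N_H)$, the pointwise inequalities $e^{-C} N_H \leq N_V \leq e^{C} N_H$ pass through the infimum in (\ref{eq_ray_norm_defn0}) termwise on any decomposition $f = \sum f_i$, $f_i \in \mathcal{F}^{\mu_i}V$, giving $e^{-C} N_{H,\mathcal{F}}^{t} \leq N_{V,\mathcal{F}}^{t} \leq e^{C} N_{H,\mathcal{F}}^{t}$, hence the desired bound.

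For the second summand I would use the adapted orthonormal basis $s_1,\ldots,s_r$. A dimension count shows that for every $\mu \in \real$ one has $\mathcal{F}^{\mu} V = \mathrm{span}\{s_i : e_{\mathcal{F}}(i) \geq \mu\}$. For the bound $\|f\|_{H,\mathcal{F}}^{\perp,t} \leq \|f\|_{H,\mathcal{F}}^{t}$, take any decomposition $f = \sum_j f_j$ with $f_j \in \mathcal{F}^{\mu_j} V$; writing $f_j = \sum_{i : e_{\mathcal{F}}(i) \geq \mu_j} c_{j,i}\,s_i$, the definition in (\ref{eq_bas_st}) gives $\|f_j\|_{H,\mathcal{F}}^{\perp,t} = \bigl(\sum_i |c_{j,i}|^2 e^{-2t e_{\mathcal{F}}(i)}\bigr)^{1/2} \leq e^{-t\mu_j} \|f_j\|_H$; summing, using that $N_{H,\mathcal{F}}^{\perp,t}$ is a norm, and passing to the infimum over decompositions yields the bound. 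Conversely, using the single decomposition $f = \sum c_i s_i$ (with $c_i s_i \in \mathcal{F}^{e_{\mathcal{F}}(i)}V$) in (\ref{eq_ray_norm_defn0}) gives $\|f\|_{H,\mathcal{F}}^{t} \leq \sum_i e^{-t e_{\mathcal{F}}(i)} |c_i|$, and Cauchy–Schwarz turns this into $\sqrt{\dim V}\,\|f\|_{H,\mathcal{F}}^{\perp,t}$. Together, these give $d_{+\infty}(N_{H,\mathcal{F}}^{t}, N_{H,\mathcal{F}}^{\perp,t}) \leq \tfrac12 \log \dim V$, and combining all three estimates closes the proof.

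There is no real obstacle here; the only subtlety is noting that the adaptedness $s_i \in \mathcal{F}^{e_{\mathcal{F}}(i)} V$ together with the strict drop of dimension at jumping numbers forces $\mathrm{span}\{s_1,\ldots,s_j\} = \mathcal{F}^{e_{\mathcal{F}}(j)} V$, which is the input that allows expanding arbitrary $f_j \in \mathcal{F}^{\mu_j}V$ in the adapted basis with indices constrained by $e_{\mathcal{F}}(i) \geq \mu_j$. Everything else is Cauchy–Schwarz and termwise manipulation of the infimum formula.
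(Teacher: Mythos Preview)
Your proof is correct and follows essentially the same route as the paper: split via the triangle inequality through the intermediate ray $N_{H,\mathcal{F}}^{t}$, bound the first piece by monotonicity of (\ref{eq_ray_norm_defn0}) in the initial norm, and compare the two Hermitian-based rays directly using the adapted basis. Your argument for the inequality $N_{H,\mathcal{F}}^{\perp,t} \leq N_{H,\mathcal{F}}^{t}$ is actually sharper than the paper's (which proves only $N_{H,\mathcal{F}}^{\perp,t} \leq \dim V \cdot N_{H,\mathcal{F}}^{t}$ by projecting onto filtration steps and summing), so you obtain $d_{+\infty}(N_{H,\mathcal{F}}^{t}, N_{H,\mathcal{F}}^{\perp,t}) \leq \tfrac{1}{2}\log\dim V$ rather than $\log\dim V$; either suffices for the lemma as stated.
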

	\begin{proof}
		Let us denote by $N_{H, \mathcal{F}}^{t}$ the ray of norms emanating from $N_H$ by the construction from (\ref{eq_ray_norm_defn0}).
		Let us establish first that
		\begin{equation}\label{eq_cal_hn_comp2}
			\dim V \cdot N_{H, \mathcal{F}}^{t} \geq  N_{H, \mathcal{F}}^{\perp, t}.
		\end{equation}
		By the definition of $N_{H, \mathcal{F}}^{t}$, we conclude that for any $\lambda \in \real$, $f \in V$, we have
		\begin{equation}\label{eq_cal_hn_comp3}
			\| f \|_{H, \mathcal{F}}^{t}  \geq e^{-t \lambda} \| Q_{\lambda}(f) \|_H,
		\end{equation}
		where $Q_{\lambda}(f) := f - P_{\lambda}(f)$ and $P_{\lambda}(f)$ is the projection of $f$ to $\cup_{\epsilon > 0} \mathcal{F}^{\lambda + \epsilon} V$ with respect to the norm $N_H$.
		We take now the decomposition $f = \sum a_i s_1^t$, $a_i \in \comp$ of $f \in V$ in basis $(s_1^t, \ldots, s_r^t)$ from (\ref{eq_bas_st}).
		Then by the definition of $N_{H, \mathcal{F}}^{\perp, t}$, we have
		\begin{equation}\label{eq_cal_hn_comp001}
			\| f \|_{H, \mathcal{F}}^{\perp, t}  := \sqrt{ \sum |a_i|^2}.
		\end{equation}
		By taking sums of (\ref{eq_cal_hn_comp3}) over all jumping numbers, using (\ref{eq_cal_hn_comp001}) and the fact that for any $i = 1, \ldots, r$, we have
		$ \| Q_{e_{\mathcal{F}}(i)}(f) \|_H
		\geq
		e^{t e_{\mathcal{F}}(i)} \cdot |a_i|$, we deduce (\ref{eq_cal_hn_comp2}).
		\par 
		Now, directly by the definition of $N_{H, \mathcal{F}}^{t}$, we obtain
		$
			\| f \|_{H, \mathcal{F}}^{t} \leq  \sum |a_i|
		$.
		From this, (\ref{eq_cal_hn_comp001}) and mean value inequality, we establish
		\begin{equation}\label{eq_cal_hn_comp0}
			N_{H, \mathcal{F}}^{t} \leq \sqrt{\dim V} \cdot  N_{H, \mathcal{F}}^{\perp, t}.
		\end{equation}
		\par 
		From (\ref{eq_cal_hn_comp2}) and (\ref{eq_cal_hn_comp0}), we conclude that 
		$
			d_{+ \infty}(N_{H, \mathcal{F}}^{\perp, t}, N_{H, \mathcal{F}}^t)
			\leq
			\log \dim V
		$.
		To finish the proof, it is only left to use the following trivial bound
		$
			d_{+ \infty}(N_{V, \mathcal{F}}^t, N_{H, \mathcal{F}}^t)
			\leq
			d_{+ \infty}(N_H, N_V)
		$.
	\end{proof}
	\par 
	While Lemma \ref{lem_two_norms_comp0} says that the two rays are very close, the first construction of the ray is very often easier to work with.
	In particular, it is trivial that the first construction is monotonic with respect to the initial norm and the filtration (for the second construction, it is also true but it is a non-trivial statement, cf. \cite[Proposition 4.12]{FinTits}).
	More importantly, as we shall see in Section \ref{sect_gr_jm}, when both rays of norms are defined on a graded algebra (instead of a vector space), the first ray preserves submultiplicativity. 
	For the second ray, the analogous result is not quite clear; moreover, it is not so trivial to construct a Hermitian submultiplicative norm to initiate the ray.
	\par 
	Now, filtrations $\mathcal{F}$ on $V$ are in one-to-one correspondence with functions $\chi_{\mathcal{F}} : V \to [0, +\infty[$, defined as
	\begin{equation}\label{eq_filtr_norm}
		\chi_{\mathcal{F}}(s) := \exp(- w_{\mathcal{F}}(s)).
	\end{equation}
	where $w_{\mathcal{F}}(s)$ is the weight associated with the filtration, defined as
	$
		w_{\mathcal{F}}(s) := \sup \{ \lambda \in \real : s \in F^{\lambda} V \}
	$.
	An easy verification shows that $\chi_{\mathcal{F}}$ is a non-Archimedean norm on $V$ with respect to the trivial absolute value on $\comp$, i.e. it satisfies the following axioms
	\begin{enumerate}
		\item $\chi_{\mathcal{F}}(f) = 0$ if and only if $f = 0$,
		\item $\chi_{\mathcal{F}}(\lambda f) = \chi_{\mathcal{F}}(f)$, for any $\lambda \in \comp^*$, $k \in \nat^*$, $f \in V$,
		\item $\chi_{\mathcal{F}}(f + g) \leq \max \{ \chi_{\mathcal{F}}(f), \chi_{\mathcal{F}}(g) \}$, for any $k \in \nat^*$, $f, g \in V$.
	\end{enumerate}
	\par 
	Remark the following relation between the non-Archimedean norm $\chi_{\mathcal{F}}$ and the rays of submultiplicative norms $N_{H, \mathcal{F}}^{\perp, t}$, $ N_{V, \mathcal{F}}^t$ associated to $\mathcal{F}$ as above: for any $f \in V$, we have
	\begin{equation}\label{eq_na_nm_interpol}
		\log \chi_{\mathcal{F}}(f) 
		=
		 \lim_{t \to \infty} \frac{\log \| f \|_{V, \mathcal{F}}^{t}}{t} 
		=
		\lim_{t \to \infty} \frac{\log \| f \|_{H, \mathcal{F}}^{\perp, t}}{t}.
	\end{equation}
	Hence, the rays $N_{H, \mathcal{F}}^{\perp, t}$, $ N_{V, \mathcal{F}}^t$, $t \in [1, +\infty[$, should be regarded as interpolations between a fixed norm and a non-Archimedean norm associated to $\mathcal{F}$.
	\par 
	Let us, finally, recall some basic constructions of norms on tensor products.
	Let $V_1, V_2$ be two finite dimensional vector spaces endowed with norms $N_i = \norm{\cdot}_i$, $i = 1, 2$.
	\par 
	The \textit{projective tensor norm} $N_1 \otimes_{\pi} N_2 = \norm{\cdot}_{\otimes_{\pi}}$ on $V_1 \otimes V_2$ is defined for $f \in V_1 \otimes V_2$ as 
	\begin{equation}\label{eq_defn_proj_norm}
		\norm{f}_{ \otimes_{\pi} }
		=
		\inf
		\Big\{
			\sum \| x_i \|_1 \cdot  \| y_i \|_2
			:
			\quad
			f = \sum x_i \otimes y_i
		\Big\},
	\end{equation}
	where the infimum is taken over different ways of partitioning $f$ into a sum of decomposable terms.
	The \textit{injective tensor norm} $N_1 \otimes_{\epsilon} N_2 = \norm{\cdot}_{ \otimes_{\epsilon} }$ on $V_1 \otimes V_2$ is defined as 
	\begin{equation}\label{eq_defn_inf_norm}
		\norm{f}_{ \otimes_{\epsilon} }
		=
		\sup
		\Big\{
			\big|
				(\phi \otimes \psi)(f)
			\big|
			:
			\quad
			\phi \in V_1^*, \psi \in V_2^*, \| \phi \|_{1}^* = \| \psi \|_{2}^* = 1
		\Big\}
	\end{equation}
	where $\| \cdot \|_{i}^*$, $i = 1, 2$, are the dual norms associated with $\| \cdot \|_{i}$.
	Lemma below compares injective and projective tensor norms, see \cite[Proposition 6.1]{RyanTensProd}, \cite[Theorem 21]{SzarekComp} for a proof.
	\begin{lem}\label{lem_inj_proj_bnd_nntr}
		The following inequality between the norms on $V_1 \otimes V_2$ holds
		\begin{equation}\label{eq_bnd_proj_inf}
			 N_1 \otimes_{\epsilon} N_2
			 \leq
			 N_1 \otimes_{\pi} N_2
			 \leq
			 N_1 \otimes_{\epsilon} N_2
			 \cdot
			 \min \{ \dim V_1, \dim V_2 \}.
		\end{equation}
		If, moreover, the norms $N_1$ and $N_2$ are Hermitian, then
		\begin{equation}
			 N_1 \otimes_{\epsilon} N_2
			 \leq
			  N_1 \otimes N_2
			 \leq
			  N_1 \otimes_{\pi} N_2.
		\end{equation}
	\end{lem}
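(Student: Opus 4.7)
The plan is to handle the four inequalities separately, using standard duality for the easy ones and an Auerbach-basis argument for the only non-trivial bound.

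For the inequality $N_1 \otimes_{\epsilon} N_2 \leq N_1 \otimes_{\pi} N_2$, I would argue directly from the definitions: given any decomposition $f = \sum x_i \otimes y_i$ and any unit functionals $\phi \in V_1^*$, $\psi \in V_2^*$ (with respect to the dual norms), we have
\begin{equation*}
    |(\phi \otimes \psi)(f)|
    = \Big| \sum \phi(x_i) \psi(y_i) \Big|
    \leq \sum \| x_i \|_1 \cdot \| y_i \|_2.
\end{equation*}
Taking the infimum over decompositions on the right and the supremum over $\phi, \psi$ on the left gives the bound. The same style of computation, after writing $f = \sum x_i \otimes y_i$ in terms of orthonormal bases and applying Cauchy–Schwarz to the functionals associated to $\phi, \psi$ under the Hermitian isomorphisms $V_i \simeq V_i^*$, gives the Hermitian inequality $N_1 \otimes_{\epsilon} N_2 \leq N_1 \otimes N_2$. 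The remaining Hermitian bound $N_1 \otimes N_2 \leq N_1 \otimes_{\pi} N_2$ follows from the triangle inequality and the multiplicativity of the Hilbert tensor norm on decomposable tensors: for $f = \sum x_i \otimes y_i$,
\begin{equation*}
    \| f \|_{N_1 \otimes N_2}
    \leq \sum \| x_i \otimes y_i \|_{N_1 \otimes N_2}
    = \sum \| x_i \|_1 \cdot \| y_i \|_2.
\end{equation*}

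The main point is the upper bound by $\min\{\dim V_1, \dim V_2\}$ in (\ref{eq_bnd_proj_inf}), where one must pass from an $\epsilon$-control (a supremum over rank-one functionals) back to a $\pi$-control (an infimum over decompositions). Without loss of generality assume $n := \dim V_1 \leq \dim V_2$. I would invoke an Auerbach basis: a basis $e_1, \ldots, e_n$ of $(V_1, N_1)$ whose dual basis $e_1^*, \ldots, e_n^* \in V_1^*$ satisfies $\| e_i \|_1 = \| e_i^* \|_1^* = 1$ for all $i$ (its existence is a classical consequence of a volume-maximization argument on $V_1$). Given $f \in V_1 \otimes V_2$, expand it canonically as $f = \sum_{i = 1}^n e_i \otimes y_i$, where $y_i := (e_i^* \otimes \mathrm{id})(f) \in V_2$. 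Testing $y_i$ against unit functionals on $V_2$ produces
\begin{equation*}
    \| y_i \|_2
    = \sup_{\| \psi \|_2^* = 1} | \psi(y_i) |
    = \sup_{\| \psi \|_2^* = 1} | (e_i^* \otimes \psi)(f) |
    \leq \| f \|_{\otimes_{\epsilon}},
\end{equation*}
since $\| e_i^* \|_1^* = 1$. Plugging this decomposition into the definition of the projective tensor norm yields $\| f \|_{\otimes_{\pi}} \leq \sum_{i = 1}^n \| e_i \|_1 \cdot \| y_i \|_2 \leq n \cdot \| f \|_{\otimes_{\epsilon}}$.

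The only step that is not a matter of unwinding definitions is the existence of an Auerbach basis; everything else is manipulation of the supremum/infimum descriptions of $\otimes_{\epsilon}$ and $\otimes_{\pi}$. Symmetry in $V_1, V_2$ then gives the factor $\min\{\dim V_1, \dim V_2\}$.
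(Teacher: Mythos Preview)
Your proof is correct and complete. The paper does not actually supply its own argument for this lemma; it simply cites \cite[Proposition 6.1]{RyanTensProd} and \cite[Theorem 21]{SzarekComp} for a proof. Your Auerbach-basis argument for the nontrivial bound $N_1 \otimes_{\pi} N_2 \leq \min\{\dim V_1, \dim V_2\} \cdot N_1 \otimes_{\epsilon} N_2$ is precisely the standard route found in those references, and the remaining inequalities are, as you note, immediate from the definitions.
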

	
	\subsection{Pluripotential theory and quantization of distances}\label{sect_pp_thr}
	The main goal of this section is to recall some basic facts from pluripotential theory, emphasising metric and quantization aspects of the theory.
	\par 
	Let us fix a Kähler form $\omega$ on $X$ and consider the space $\mathcal{H}_{\omega}$ of Kähler potentials, consisting of $u \in \ccal^{\infty}(X, \real)$, such that $\omega_u := \omega + \imun \partial \dbar u$ is strictly positive.
	We denote by ${\rm{PSH}}(X, \omega)$ the set of $\omega$-psh potentials; these are upper semi-continuous functions $u \in L^1(X, \real \cup \{ -\infty \})$, such that $\omega_u$ is positive as a $(1, 1)$-current.
	When the De Rham cohomology class $[\omega]$ of $\omega$ satisfies $[\omega] \in 2 \pi H^2(X, \mathbb{Z})$, there is a Hermitian line bundle $(L, h^L_0)$, such that $\omega = 2 \pi c_1(L, h^L_0)$. 
	Hence, upon fixing $h^L_0$ (which is uniquely defined up to a multiplication by a locally constant function), the set $\mathcal{H}_{\omega}$  (resp. ${\rm{PSH}}(X, \omega)$) can be identified with the set of smooth positive (resp. psh) metrics on $L$ through the correspondence
	\begin{equation}\label{eq_pot_to_metr}
		u \mapsto h^L := e^{-u} \cdot h^L_0.
	\end{equation}
	Remark that we then have $\omega_u = 2 \pi c_1(L, h^L)$.
	This identification will be implicit later on, and all the constructions (of distances, geodesics, psh rays, etc.) for elements from $\mathcal{H}_{\omega}$ and ${\rm{PSH}}(X, \omega) \cap \mathcal{L}^{\infty}(X)$ will be implicitly extended to the corresponding sets of metrics on the line bundle $L$.
	\par 
	One can introduce on $\mathcal{H}_{\omega}$ a collection of $L^p$-type Finsler metrics.
	For $u \in \mathcal{H}_{\omega}$, let us first define the Monge-Ampère operator as $MA(u) := \frac{\omega_u^n}{V}$, where $V = \int \omega^n$.
	If $u \in \mathcal{H}_{\omega}$ and $\xi \in T_u \mathcal{H}_{\omega} \simeq \ccal^{\infty}(X, \real)$, then the $L_p$-length of $\xi$ is given by the following expression
	\begin{equation}\label{eq_finsl_dist_fir}
		\| \xi \|_{p, u}
		:=
		\sqrt[p]{
		 \int_X |\xi|^p \cdot MA(u)}.
	\end{equation}
	For $p = 2$, this was introduced by Mabuchi \cite{Mabuchi}, and for $p \in [1, +\infty[$, by Darvas \cite{DarvasFinEnerg}.
	\par 
	Using these Finsler metrics, one can introduce path length metric structures $(\mathcal{H}_{\omega}, d_p)$. 
	In \cite{DarvasFinEnerg}, Darvas studied the completion of these metric spaces, $(\mathcal{E}_{\omega}^p, d_p)$, nowadays called \textit{finite $p$-energy classes}, and proved that these completions are geodesic metric spaces and have a vector space structure.
	It is also well-known, cf. Guedj-Zeriahi \cite[Exercise 10.2]{GuedjZeriahBook}, that
	\begin{equation}\label{eq_inter_ep}
		\cap_{p = 1}^{\infty} \mathcal{E}_{\omega}^p
		=
		{\rm{PSH}}(X, \omega) \cap \mathcal{L}^{\infty}(X).
	\end{equation}
	\par 
	Darvas proved in \cite[Proposition 4.9]{DarvasFinEnerg} that a monotonic sequence of bounded psh metrics $h^L_i$ converges almost everywhere to a bounded psh metric $h^L$, if and only if for any (or for some) $p \in [1, + \infty[$, we have 
	\begin{equation}\label{eq_darvas_conv}
		\lim_{i \to \infty} d_p(h^L_i, h^L) = 0.
	\end{equation}
	\par 
	The distance on $\mathcal{E}_{\omega}^1$ can be alternatively described in terms of the \textit{Monge-Ampère energy} functional $E$.
	Recall that $E$ is explicitly given for $u, v \in \mathcal{H}_{\omega}$ by
	\begin{equation}\label{eq_energy}
		E(u) - E(v)
		=
		\frac{1}{(n + 1) V}
		\sum_{j = 0}^{n} \int_X (u - v) w_u^j \wedge w_v^{n - j}.
	\end{equation}
	By \cite[Proposition 10.14]{GuedjZeriahBook}, $E$ is monotonic, i.e. for any $u \leq v$, we have $E(u) \leq E(v)$.
	From this and Remark \ref{rem_regul_above}b), it is reasonable to extend the domain of the definition of $E$ to ${\rm{PSH}}(X, \omega)$ as
	\begin{equation}\label{eq_energy_ext}
		E(u) :=
		\inf 
		\Big \{ 
		 E(v) : v \in \mathcal{H}_{\omega}, u \leq v
		\Big \}.
	\end{equation}
	Darvas proved in \cite{DarvasFinEnerg} that $\mathcal{E}_{\omega}^1$ coincides with the set of $u \in {\rm{PSH}}(X, \omega)$, verifying $E(u) > - \infty$. 
	Moreover, for any $u, v \in \mathcal{E}_{\omega}^1$, verifying $u \leq v$, according to\cite[Corollary 4.14]{DarvasFinEnerg}, we have
	\begin{equation}\label{eq_d1_ener}
		d_1(u, v) = E(v) - E(u).
	\end{equation}
	In particular, similarly to Lemma \ref{lem_d_1_ident_metr}, $(\mathcal{E}_{\omega}^1, d_1)$ is not a uniquely geodesic space -- a fact originally observed by Darvas \cite[comment after Theorem 4.17]{DarvWeakGeod}.
	\par 
	Certain geodesic segments of $(\mathcal{E}_{\omega}^p, d_p)$ can be constructed as upper envelopes of quasi-psh functions. 
	More precisely, we identify paths $u_t \in \mathcal{E}_{\omega}^p$, $t \in [0, 1]$ with rotationally-invariant $\hat{u}$ over $X \times \mathbb{D}_{e^{-1}, 1}$ by 
	\begin{equation}\label{eq_defn_hat_u}
		\hat{u}(x, s) = u_{- \log |s|}(x).
	\end{equation}
	We say that a curve $[0,1] \ni t \to v_t \in \mathcal{E}_{\omega}^p$ is a \textit{weak subgeodesic} connecting $u_0, u_1 \in \mathcal{E}_{\omega}^p$ if $d_p(v_t, u_i) \to 0$, as $t \to 0$ for $i = 0$ and $t \to 1$ for $i = 1$, and we have
	\begin{equation}\label{eq_subgeod_req}
		\hat{u} \text{ is $\pi^* \omega$-psh on $X \times \mathbb{D}_{e^{-1}, 1}$.}
	\end{equation}	
	As shown in \cite[Theorem 2]{DarvasFinEnerg}, a distinguished $d_p$-geodesic $[0, 1] \ni t \to u_t \in \mathcal{E}_{\omega}^p$ connecting $u_0, u_1$ can be then obtained as the following envelope
	\begin{equation}\label{eq_geod_as_env}
		u_t := \sup \Big\{ 
			v_t \, : \, t \to v_t \,  \text{ is a weak subgeodesic connecting } \, v_0 \leq u_0 \text{ and } v_1 \leq u_1
		\Big\}.
	\end{equation}
	\par 
	When $u_0, u_1 \in {\rm{PSH}}(X, \omega) \cap \mathcal{L}^{\infty}(X)$, Berndtsson \cite[\S 2.2]{BernBrunnMink} in \cite[\S 2.2]{BernBrunnMink} proved that $u_t$, $t \in [0, 1]$, defined by (\ref{eq_geod_as_env}), can be described as the only path connecting $u_0$ to $u_1$, so that $\hat{u}$ is the solution of the following Monge-Ampère equation
	\begin{equation}\label{eq_ma_geod}
		(\pi^* \omega + \imun \partial \dbar \hat{u})^{n + 1} = 0,
	\end{equation}
	where the wedge power is interpreted in Bedford-Taylor sense \cite{BedfordTaylor}.
	\par 
	For smooth geodesic segments in $(\mathcal{H}_{\omega}, d_2)$, Semmes \cite{Semmes} and Donaldson \cite{DonaldSymSp} have made similar observations before.
	The uniqueness of the solution of (\ref{eq_ma_geod}) is assured by \cite[Lemma 5.25]{GuedjZeriahBook}.
	Remark, in particular, that for any $u_0, u_1 \in {\rm{PSH}}(X, \omega) \cap \mathcal{L}^{\infty}(X)$, the distinguished weak geodesic connecting them is the same if we regard $u_0, u_1$ as elements in any of $\mathcal{E}_{\omega}^p$, $p \in [1, + \infty[$.
	\par 
	\begin{thm}[{Darvas-Lu \cite[Theorem 2]{DarLuGeod} }]\label{thm_dar_lu_uniq_geod}
		For any $p \in ]1, + \infty[$, $(\mathcal{E}_{\omega}^p, d_p)$ is uniquely geodesic.
	\end{thm}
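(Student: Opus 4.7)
The plan is to exploit strict convexity of $\mathcal{E}_\omega^p$ for $p > 1$, analogous to uniform convexity of $L^p$ via Clarkson inequalities, together with the already-known existence and uniqueness of the canonical envelope/Monge-Amp\`ere geodesic (\ref{eq_geod_as_env})--(\ref{eq_ma_geod}). First I would handle the case of bounded endpoints $u_0, u_1 \in {\rm{PSH}}(X, \omega) \cap \mathcal{L}^{\infty}(X)$: here the canonical geodesic $u_t^c$ of (\ref{eq_geod_as_env}) is bounded, is a $d_p$-geodesic for every $p \in [1,+\infty[$, and by \cite[Lemma 5.25]{GuedjZeriahBook} is the unique bounded solution of (\ref{eq_ma_geod}). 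It would then remain only to rule out further $d_p$-competitors.

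Given a hypothetical second $d_p$-geodesic $t \mapsto v_t$ with the same endpoints, I would run a midpoint-averaging argument. The midpoint $w := (u_{1/2}^c + v_{1/2})/2$ still lies in $\mathcal{E}_\omega^p$, by convexity of ${\rm{PSH}}(X, \omega)$ and the vector space structure of $\mathcal{E}_\omega^p$. The crux is a Clarkson-type inequality
\begin{equation*}
d_p(u_0, w)^p + d_p(w, u_1)^p + c_p\, d_p(u_{1/2}^c, v_{1/2})^p \leq \tfrac{1}{2}\big( d_p(u_0, u_{1/2}^c)^p + d_p(u_{1/2}^c, u_1)^p + d_p(u_0, v_{1/2})^p + d_p(v_{1/2}, u_1)^p \big),
\end{equation*}
with $c_p > 0$ for $p > 1$. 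Such an estimate should follow from pointwise strict convexity of $t \mapsto t^p$ combined with monotonicity/concavity of the Monge-Amp\`ere energy along pointwise averages of $\omega$-psh potentials (a Pr\'ekopa/Berndtsson-type inequality), applied to the Finsler length (\ref{eq_finsl_dist_fir}). Using the geodesic identities $d_p(u_0, u_{1/2}^c) = d_p(u_{1/2}^c, u_1) = d_p(u_0, v_{1/2}) = d_p(v_{1/2}, u_1) = \tfrac{1}{2} d_p(u_0, u_1)$ together with the triangle inequality along $u_0, w, u_1$, the displayed inequality forces $v_{1/2} = u_{1/2}^c$; iterating at dyadic times and invoking continuity in $t$ yields $v_t = u_t^c$ throughout $[0,1]$.

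To pass from bounded to arbitrary endpoints in $\mathcal{E}_\omega^p$, I would approximate $u_0, u_1$ by decreasing sequences of bounded $\omega$-psh potentials (which converge in $d_p$ by (\ref{eq_darvas_conv})); the corresponding canonical geodesics converge in $d_p$ to the canonical geodesic between $u_0, u_1$, and any competing $d_p$-geodesic likewise arises as a $d_p$-limit, reducing the problem to the bounded case. The main obstacle is the Clarkson-type inequality above: because the Finsler length (\ref{eq_finsl_dist_fir}) is integrated against the curve-dependent measure $MA(u_t)$ rather than a fixed background measure, the desired strict convexity does not follow from uniform convexity of $L^p$ directly. One must combine strict convexity of $t \mapsto t^p$ with genuine pluripotential-theoretic monotonicity of the Monge-Amp\`ere energy under pointwise averaging of potentials; the breakdown of this strict convexity at $p = 1$ is already visible in Lemma \ref{lem_d_1_ident_metr}, so the argument intrinsically requires $p > 1$.
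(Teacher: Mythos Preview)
This theorem is not proved in the paper; it is quoted as \cite[Theorem 2]{DarLuGeod} of Darvas--Lu without argument. So there is no in-paper proof to compare against, only the external reference.

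Your overall strategy---deduce uniqueness from a uniform/strict convexity estimate for $(\mathcal{E}_\omega^p, d_p)$ when $p>1$---is indeed the route Darvas--Lu take, but your sketch has a genuine gap exactly where you flag ``the main obstacle''. The displayed Clarkson-type inequality compares $d_p$-distances from $u_0, u_1$ to the \emph{pointwise} average $w = (u_{1/2}^c + v_{1/2})/2$ of potentials. The metric $d_p$, however, is a path-length metric built from the Finsler structure (\ref{eq_finsl_dist_fir}) with the moving measure $MA(u)$; it has no tractable relationship to linear operations on potentials. Your proposed justification, ``strict convexity of $t \mapsto t^p$ combined with monotonicity/concavity of the Monge-Amp\`ere energy under pointwise averaging'', does not produce the stated inequality: concavity of $E$ under averaging (which is what a Pr\'ekopa/Berndtsson input would give) controls $d_1$-type quantities via (\ref{eq_d1_ener}), not $d_p$ for $p>1$, and says nothing about $d_p(u_0, w)^p + d_p(w, u_1)^p$. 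As written, the inequality is an assertion, not a consequence of the tools you name.

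Darvas--Lu do not work with pointwise averages of potentials. Their uniform convexity is proved by comparing \emph{geodesic} midpoints (i.e.\ midpoints of the distinguished envelopes (\ref{eq_geod_as_env})) and exploiting the explicit formula (\ref{eq_d_p_berndss}) together with fine comparison properties of the Monge-Amp\`ere operator along nearby geodesics; the key step is a Busemann-type $p$-uniform convexity inequality between two geodesics sharing an endpoint, which is substantially more delicate than anything you invoke. Your reduction from general to bounded endpoints and the dyadic iteration are fine, but without the core convexity inequality the argument does not close.
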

	\par 
	\begin{sloppypar}
	Let us now define the spectral measure of a finite geodesic segment.
	We fix $u_0, u_1 \in {\rm{PSH}}(X, \omega) \cap \mathcal{L}^{\infty}(X)$ and consider $u_t$, $t \in [0, 1]$ as in (\ref{eq_geod_as_env}).
	From Berndtsson \cite[\S 2.2]{BernBrunnMink}, we know that then $u_t \in \mathcal{L}^{\infty}(X)$ and the limits $\lim_{t \to 0} u_t = u_0$, $\lim_{t \to 1} u_t = u_1$ hold in the uniform sense.
	Also, remark that the condition (\ref{eq_subgeod_req}) implies that for a fixed $x \in X$, the function $u_t(x)$ is convex in $t \in [0, 1]$, see \cite[Theorem I.5.13]{DemCompl}.
	Hence, one-sided derivatives $\dot{u}_t^{-}$, $\dot{u}_t^{+}$ of $u_t$ are well-defined for $t \in ]0, 1[$ and they increase in $t$.
	We denote $\dot{u}_0 := \lim_{t \to 0} \dot{u}_t^{-} = \lim_{t \to 0} \dot{u}_t^{+}$.
	From \cite[\S 2.2]{BernBrunnMink}, we know that $\dot{u}_0$ is bounded and by Darvas \cite[Theorem 1]{DarvWeakGeod}, we, moreover, have
	\begin{equation}
		\sup |\dot{u}_0| \leq \sup |u_1 - u_0|.
	\end{equation}
	We now assume that $u_0 \in \mathcal{H}_{\omega}$ and define the \textit{spectral measure} $\mu_{u_0, u_1}$ as 
	\begin{equation}\label{eq_spec_meas_defn_seg}
		\mu_{u_0, u_1} = (\dot{u}_0)_* (MA(u_0)),
	\end{equation}
	where $MA(u_0)$ was defined in (\ref{eq_finsl_dist_fir}).
	Clearly, by (\ref{eq_pot_to_metr}), such definition coincides with the one from the introduction.
	Then according to Darvas-Lu-Rubinstein \cite[Lemma 4.5]{DarvLuRub}, for any $u_0 \in \mathcal{H}_{\omega}$, $u_1 \in {\rm{PSH}}(X, \omega) \cap \mathcal{L}^{\infty}(X)$, we have
	\begin{equation}\label{eq_d_p_berndss}
		d_p(u_0, u_t)
		=
		t \cdot \sqrt[p]{ \int_X |\dot{u}_0|^p \cdot MA(u_0) }.
	\end{equation}
	See also Berndtsson \cite{BerndtProb} and Di Nezza-Lu \cite{DiNezzLuGeodDist} for related results.
	\end{sloppypar}
	\par 
	Remark that when $u_1 \geq u_0$, from (\ref{eq_geod_as_env}), the inequality $u_t \geq u_0$ holds for any $t \in [0, 1]$.
	In particular, $\dot{u}_0 \geq 0$ and then the spectral measure is characterized by the following property 
	\begin{equation}\label{eq_char_spec_meas}
		 \text{$p$-moments of $\mu_{u_0, u_1}$ coincide with $d_p(u_0, u_1)^p$ for any $p \in \nat^*$.}
	\end{equation}
	\par 
	For a bounded metric $h^L$ on $L$ and a positive volume form $\mu$ of unit volume on $X$, we denote by ${\rm{Hilb}}_k(h^L, \mu) = \| \cdot \|_{L^2_k(h^K, \mu)}$ the $L^2$-norm on $H^0(X, L^{\otimes k})$, defined for $f \in H^0(X, L^{\otimes k})$ as
	\begin{equation}\label{eq_hilb_defn}
		 \| f \|_{L^2_k(h^K, \mu)}^2
		 =
		 \int_X |f(x)|_{h^L}^2 d \mu(x).
	\end{equation}
	The following result says that the metrics $d_p$ on the space of Hermitian norms on $H^0(X, L^{\otimes k})$ are quantisations of Darvas metrics $d_p$ on the space of bounded psh metrics on $L$.
	\begin{thm}[{Darvas-Lu-Rubinstein \cite[Theorem 1.2]{DarvLuRub}}]\label{thm_darvlurub_quant}
		For any bounded psh metrics $h^L, h^L_0, h^L_1$ on an ample line bundle $L$ and any $p \in [1, +\infty[$, we have
		\begin{equation}
		\begin{aligned}
			&
			d_p \Big( {\rm{Hilb}}(h^L_0, \mu), {\rm{Hilb}}(h^L_1, \mu) \Big)
			=
			d_p(h^L_0, h^L_1),
			\\
			\lim_{k \to \infty} & d_p \Big( FS \big( {\rm{Hilb}}_k(h^L, \mu) \big)^{\frac{1}{k}}, h^L \Big) = 0.
		\end{aligned}
		\end{equation}
	\end{thm}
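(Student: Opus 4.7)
The plan is to quantize both assertions using Bergman kernel asymptotics, reducing them to spectral statements about Toeplitz-type operators on $H^0(X, L^k)$ for $k$ large.

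For the second statement, I would combine the Tian-Zelditch-Catlin expansion (extended to bounded psh metrics by Berman and others) with Darvas' characterization (\ref{eq_darvas_conv}). For continuous psh $h^L$, pointwise Bergman kernel asymptotics give $\tfrac{1}{k}\log B_k \to 0$, where $B_k$ is the Bergman density of ${\rm{Hilb}}_k(h^L, \mu)$; Lemma \ref{lem_fs_inf_d} then yields $FS({\rm{Hilb}}_k(h^L,\mu))^{1/k} = h^L \cdot B_k^{-1/k} \to h^L$ pointwise, with uniform two-sided bounds by smooth positive metrics. Dominated convergence on potentials promotes this to $d_p \to 0$ via (\ref{eq_darvas_conv}), and the general bounded psh case follows from Demailly regularization from below (Remark \ref{rem_regul_above}b) combined with monotonicity of ${\rm{Hilb}}_k$ in the metric.

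For the first equality, fix $h^L_0, h^L_1$ bounded psh and consider the logarithmic relative spectrum $\lambda_1(k) \geq \cdots \geq \lambda_{N_k}(k)$ from (\ref{eq_log_rel_spec}) of the pair of Hilbert norms on $H^0(X, L^k)$. These are the logarithms of eigenvalues of a self-adjoint Toeplitz-type operator on $(H^0(X, L^k), {\rm{Hilb}}_k(h_0^L, \mu))$ whose multiplicative symbol is built from $h^L_0/h^L_1$; by Berndtsson's subharmonicity theorem combined with envelope quantization of Berman, the empirical distribution $\tfrac{1}{N_k}\sum_j \delta_{\lambda_j(k)/k}$ converges weakly, as $k \to \infty$, to the spectral measure $\mu_{u_0, u_1}$ defined in (\ref{eq_spec_meas_defn_seg}). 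Testing against $|\cdot|^p$ gives
\begin{equation*}
\lim_{k \to \infty} \tfrac{1}{k}\, d_p\bigl({\rm{Hilb}}_k(h^L_0,\mu), {\rm{Hilb}}_k(h^L_1,\mu)\bigr) = \Bigl(\int_X |\dot{u}_0|^p\, MA(u_0)\Bigr)^{1/p},
\end{equation*}
and the right-hand side equals $d_p(h^L_0, h^L_1)$ by (\ref{eq_d_p_berndss}).

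The main obstacle is promoting weak convergence of the empirical spectra to convergence of $p$-th moments uniformly in $k$. For smooth strictly positive $h^L_0, h^L_1$ this is a classical Berezin-Toeplitz statement. In general one approximates monotonically from below, exploits continuity of both sides in the $d_p$-topology on $\mathcal{E}_\omega^p$ (so that the completion structure described after (\ref{eq_finsl_dist_fir}) allows passage to the limit), and uses an $L^\infty$ bound on $u_1 - u_0$ together with Lemma \ref{lem_fs_inf_d}-type envelope estimates to obtain uniform tightness of the spectra. Finally, the $\limsup$ in the definition (\ref{eq_dp_gr_n_def}) of $d_p$ on graded norms becomes an honest limit thanks to the submultiplicativity of ${\rm{Hilb}}(h^L, \mu)$, which yields the subadditivity of $k \mapsto k \cdot d_p({\rm{Hilb}}_k(h^L_0, \mu), {\rm{Hilb}}_k(h^L_1, \mu))$ up to lower order corrections.
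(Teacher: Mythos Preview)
The paper does not prove this theorem: it is quoted verbatim from Darvas--Lu--Rubinstein \cite[Theorem 1.2]{DarvLuRub} and used as a black box, with the remark after it recalling the earlier smooth cases due to Chen--Sun, Berndtsson, and Tian. There is therefore no ``paper's own proof'' to compare your proposal against.

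That said, a few points in your sketch deserve caution if you intend to flesh it out. First, your final step invokes submultiplicativity of ${\rm{Hilb}}(h^L,\mu)$ to force the $\limsup$ in (\ref{eq_dp_gr_n_def}) to be a limit; but for a general volume form $\mu$ this is not automatic (the paper's own multiplicative generation result \cite[Theorem 1.5]{FinSecRing} is for the specific Monge--Amp\`ere volume and a continuous psh metric, and even then ``multiplicatively generated'' is weaker than submultiplicative). Second, you appeal to (\ref{eq_d_p_berndss}) to identify the limit with $d_p(h^L_0,h^L_1)$, but that formula is stated in the paper only for $u_0\in\mathcal{H}_\omega$; the extension to merely bounded psh $u_0$ is part of what needs proving. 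Third, the passage from weak convergence of empirical spectra to convergence of $p$-th moments does follow from uniform boundedness of the spectra (which you do mention), so that part is fine once the smooth case is in hand; but the reduction to the smooth case via monotone approximation requires continuity of \emph{both} sides in $d_p$, and on the Hilbert-norm side this is not obvious without already knowing the result you are proving. The actual proof in \cite{DarvLuRub} handles these approximation issues through a more direct route via Lidskii-type inequalities and quantized maximum principles.
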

	\begin{rem}
		a) When $h^L, h^L_0, h^L_1$  are smooth and positive, the first result was established by Chen-Sun \cite{ChenSunQuant} for $p = 2$ and by Berndtsson \cite{BerndtProb} for $p \in [1, +\infty[$; the second result in this more regular setting is a direct consequence of Tian's theorem \cite{TianBerg}. See also Catlin \cite{Caltin}, Zelditch \cite{ZeldBerg}, Dai-Liu-Ma \cite{DaiLiuMa} and Ma-Marinescu \cite{MaHol}, for refinements of the latter statement.
		These results go in line with the general philosophy that the geometry of the space of psh metrics on $L$ can be approximated by the geometry of the space of norms on $H^0(X, L^{\otimes k})$, as $k \to \infty$, see Donaldson \cite{DonaldSymSp} and Phong-Sturm \cite{PhongSturm}.
		\par
		b) In particular, ${\rm{Hilb}}(h^L_0, \mu) \not\sim_p {\rm{Hilb}}(h^L_1, \mu)$ for any $p \in [1, +\infty]$ and $h^L_0 \neq h^L_1$ bounded psh.
		This goes in sharp contrast with sup-norms, as we shall see below.
	\end{rem}
	Define now, following Boucksom-Eriksson \cite[\S 7.5]{BouckErik21}, the Fubini-Study envelope $Q(h^L)$ of a fixed bounded (not necessarily psh) metric $h^L$ as follows
	\begin{equation}\label{eq_defn_fs_env}
		Q(h^L)(x) := \inf \Big\{ h^L_0(x) \text{ continous psh on $L$, $h^L \leq h^L_0$} \Big\}, \text{ for any } x \in X.
	\end{equation}
	Remark that by \cite[Proposition I.4.24]{DemCompl}, $Q(h^L)_*$ is psh and regularizable from above.
	In particular, $h^L = Q(h^L)_*$ if and only if $h^L$ is psh and regularizable from above.
	\begin{thm}[{\cite[Theorem 7.26 and Corollary 7.27]{BouckErik21}}]\label{thm_bouck_erikss}
		For any bounded metric $h^L$ on $L$
		\begin{equation}
			\lim_{k \to \infty} FS \big({\rm{Ban}}^{\infty}_k(h^L) \big)^{\frac{1}{k}} = Q(h^L), \qquad {\rm{Ban}}^{\infty}(h^L) = {\rm{Ban}}^{\infty}(Q(h^L)).
		\end{equation}
	\end{thm}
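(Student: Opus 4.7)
The plan is to establish the two identities almost independently: the sup-norm equality ${\rm{Ban}}^{\infty}(h^L) = {\rm{Ban}}^{\infty}(Q(h^L))$ falls out of the envelope definition of $Q(h^L)$ through a quasi-psh weight comparison, while the Fubini-Study convergence $\lim_k FS({\rm{Ban}}^{\infty}_k(h^L))^{1/k} = Q(h^L)$ is a quantization result proved via Ohsawa-Takegoshi extension combined with Demailly's regularization.

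For the sup-norm identity, one inequality follows immediately from the envelope definition, which forces $h^L \leq Q(h^L)$ pointwise and hence ${\rm{Ban}}^{\infty}(h^L) \leq {\rm{Ban}}^{\infty}(Q(h^L))$. For the reverse, I would fix $f \in H^0(X, L^k)$ and set $c := \| f \|_{L^{\infty}_k(X, h^L)}$. The function $\psi(y) := \tfrac{1}{k} \log (|f(y)|/c)$ is then a psh weight pointwise bounded above by $\phi_{h^L}$, so any continuous psh $h^L_0 \geq h^L$ satisfies $\psi \leq \phi_{h^L_0}$ everywhere. Passing to the infimum over such $h^L_0$ gives $\psi \leq \phi_{Q(h^L)}$, which translates to $|f(y)|_{Q(h^L)^k} \leq c$ pointwise, hence $\| f \|_{L^{\infty}_k(X, Q(h^L))} \leq c$.

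For the Fubini-Study convergence, Lemma \ref{lem_fs_inf_d} identifies the weight of $FS({\rm{Ban}}^{\infty}_k(h^L))^{1/k}$ at $x$ with $\phi_k(x) = \sup \{ \tfrac{1}{k} \log |s(x)| : s \in H^0(X, L^k), \| s \|_{L^{\infty}_k(X, h^L)} \leq 1 \}$. The upper bound $\phi_k \leq \phi_{Q(h^L)}$ is identical to the argument above: each admissible $s$ contributes the psh weight $\tfrac{1}{k}\log|s|$, which is bounded by $\phi_{h^L}$ and hence by $\phi_{Q(h^L)}$. For the lower bound, I would approximate $\phi_{Q(h^L)}$ from below by smooth strictly psh weights $\psi_j \leq \phi_{h^L}$ using Demailly's regularization (Remark \ref{rem_regul_above}b)), and then for each $\psi_j$ and each $x_0 \in X$ invoke the Ohsawa-Takegoshi extension theorem applied to the zero-dimensional submanifold $\{ x_0 \}$, producing for every $k$ a section $s_k \in H^0(X, L^k)$ with $s_k(x_0) \neq 0$ and $\| s_k \|_{L^2_k(X, \psi_j)} \leq C_k \cdot |s_k(x_0)|_{\psi_j^k}$ with $C_k$ polynomial in $k$. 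Converting to an $L^{\infty}$-estimate via the sub-mean-value inequality for psh functions, and then using $\psi_j \leq \phi_{h^L}$ to drop from the $\psi_j$-sup-norm to the $h^L$-sup-norm, yields $\tfrac{1}{k} \log |s_k(x_0)| \geq \psi_j(x_0) - O(\log k / k)$ after normalization. Sending $k \to \infty$ and then $j \to \infty$ gives $\liminf_k \phi_k(x_0) \geq \phi_{Q(h^L)}(x_0)$.

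The principal obstacle is the lower bound in the Fubini-Study convergence: constructing peak sections at an arbitrary point $x_0$ with $L^{\infty}$-constants that grow only polynomially in $k$, in the setting where the available psh weight $\psi_j$ need not be smooth or strictly positive initially. This requires the semiclassical form of Ohsawa-Takegoshi with explicit tracking of the extension constant, together with a uniform sub-mean-value inequality valid across all $x_0 \in X$. A secondary but nontrivial point is handling the upper semicontinuous regularization when passing from the pointwise psh envelope to $\phi_{Q(h^L)}$; this is where the assumption that $Q(h^L)$ is realized as an infimum over \emph{continuous} psh metrics, and the fact that it is regularizable from above, become essential.
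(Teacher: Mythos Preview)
The paper does not prove this statement; Theorem \ref{thm_bouck_erikss} is quoted from Boucksom--Eriksson \cite{BouckErik21} and used as a black box. So there is no in-paper proof to compare against, and I evaluate your sketch on its own merits.

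Your lower bound for the Fubini--Study convergence (approximate $\phi_{Q(h^L)}$ from below by smooth strictly psh $\psi_j \leq \phi_{h^L}$, then build Ohsawa--Takegoshi peak sections, then pass $L^2$-to-$L^\infty$) is the standard route and is sound. One quibble: the citation of Remark \ref{rem_regul_above}b) is off --- that remark gives Demailly's \emph{decreasing} smooth psh approximation of a fixed psh weight, i.e.\ approximation from \emph{above} in weights. What you actually need is that the envelope $\phi_{Q(h^L)} = \sup\{\psi \text{ continuous psh}: \psi \leq \phi_{h^L}\}$ is unchanged when one restricts to smooth strictly psh competitors, which follows from Richberg-type regularization of each continuous competitor (shift down by $\epsilon$, then smooth).

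Your argument for the sup-norm identity and for the upper bound $\phi_k \leq \phi_{Q(h^L)}$, however, has a genuine direction error. In the weight convention making $\psi_f := \tfrac{1}{k}\log(|f|/c)$ psh and $\psi_f \leq \phi_{h^L}$ (namely $|f|_{h^L} = |f|e^{-k\phi_{h^L}}$), a competitor $h^L_0 \geq h^L$ in the envelope has weight $\phi_{h^L_0} \leq \phi_{h^L}$, and $\phi_{Q(h^L)}$ is the \emph{supremum} of such $\phi_{h^L_0}$, not the infimum. So from $\psi_f \leq \phi_{h^L}$ you cannot deduce $\psi_f \leq \phi_{h^L_0}$ for each competitor, and ``passing to the infimum'' goes the wrong way. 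The clean fix is already implicit in the paper: by Lemma \ref{lem_fs_inf_d} and the discussion at the end of Section \ref{sec_fs_bdem}, the metric $FS({\rm{Ban}}^\infty_k(h^L))^{1/k}$ is itself continuous, psh, and $\geq h^L$ (since $|l|_{FS(N_k)} = \inf_{s(x)=l}\|s\|_{L^\infty_k(h^L)} \geq |l|_{h^L}$). It is therefore a competitor in the definition of $Q(h^L)$, giving $Q(h^L) \leq FS({\rm{Ban}}^\infty_k(h^L))^{1/k}$, i.e.\ $\phi_k \leq \phi_{Q(h^L)}$; combined with (\ref{eq_norm_with_fs_compar}) this also yields $\|f\|_{L^\infty_k(Q(h^L))} \leq \|f\|_{L^\infty_k(FS(N_k)^{1/k})} \leq \|f\|_{L^\infty_k(h^L)}$, which is the missing inequality in the sup-norm identity. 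Alternatively, and closer to what you wrote: $\max(\psi_f,\psi_0)$ for any fixed continuous psh $\psi_0 \leq \phi_{h^L}$ is \emph{continuous} psh (near $\{f=0\}$ it equals $\psi_0$) and $\leq \phi_{h^L}$, hence a legitimate competitor, giving $\psi_f \leq \phi_{Q(h^L)}$.
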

	\begin{rem}\label{rem_bouck_erikss}
		From Theorems \ref{thm_darvlurub_quant}, \ref{thm_bouck_erikss}, a bounded psh metric $h^L$ is regularizable from above if and only if $\lim_{k \to \infty} d_p \big( FS \big( {\rm{Hilb}}_k(h^L, \mu) \big)^{\frac{1}{k}}, FS \big({\rm{Ban}}^{\infty}_k(h^L) \big)^{\frac{1}{k}} \big) = 0$ for any (some) $p \in [1, +\infty[$.
	\end{rem}
	\begin{prop}\label{prop_ban_hilb_eq}
		For any regularizable from above psh metric $h^L$ and $p \in [1, + \infty[$, we have 
		$
			{\rm{Hilb}}(h^L, \mu) \sim_p {\rm{Ban}}^{\infty}(h^L)
		$.
		If, moreover, $h^L$ is continuous, then one can take $p = + \infty$.
	\end{prop}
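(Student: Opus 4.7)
The trivial inequality ${\rm{Hilb}}_k(h^L, \mu) \le {\rm{Ban}}^\infty_k(h^L)$ (valid because $\mu$ is a probability measure) shows that all logarithmic relative spectrum values $\lambda_j := \lambda_j({\rm{Hilb}}_k(h^L, \mu), {\rm{Ban}}^\infty_k(h^L))$ are non-negative by (\ref{eq_log_rel_spec2}). Only the upper bound on $\frac{1}{k} d_p({\rm{Hilb}}_k, {\rm{Ban}}^\infty_k)$ requires work. The obstacle to directly applying Theorem \ref{thm_char2} is that ${\rm{Hilb}}(h^L, \mu)$ is manifestly \emph{not} submultiplicative. The strategy is to sandwich ${\rm{Hilb}}(h^L, \mu)$ between ${\rm{Ban}}^\infty(h^L)$ and a submultiplicative auxiliary $\tilde N$ whose Fubini-Study metric is exactly $h^L$, and then exploit Theorem \ref{thm_char2} applied to $\tilde N$.

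\textbf{The auxiliary norm.} Take $\tilde N$ to be the largest submultiplicative graded norm on $R(X, L)$ dominated by ${\rm{Hilb}}(h^L, \mu)$, namely
\begin{equation*}
\tilde N_k(f) := \inf \Big\{ \prod_i \| f_i \|_{L^2_{k_i}(h^L, \mu)} \, : \, f = \prod_i f_i,\ f_i \in H^0(X, L^{k_i}),\ \sum_i k_i = k \Big\}.
\end{equation*}
By construction $\tilde N$ is submultiplicative with $\tilde N \le {\rm{Hilb}}(h^L, \mu) \le {\rm{Ban}}^\infty(h^L)$, and hence bounded.

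\textbf{The Fubini-Study identity $FS(\tilde N) = h^L$.} The upper bound $FS(\tilde N) \le h^L$ is immediate: $\tilde N \le {\rm{Ban}}^\infty(h^L)$ gives $FS(\tilde N_k) \le FS({\rm{Ban}}^\infty_k(h^L))$, and by Theorem \ref{thm_bouck_erikss} the right-hand side converges (after taking $k$-th roots) to $Q(h^L) = h^L$, the last equality coming from the regularization-from-above assumption. The opposite bound $FS(\tilde N) \ge h^L$ is the delicate step; I would establish it via a semi-classical Ohsawa-Takegoshi-type extension, controlling the loss of the product norm along arbitrary factorizations. Concretely: for any $x \in X$, $l \in L^k_x$, and any decomposition $f = \prod f_i$ with $f(x) = l$ and $f_i \in H^0(X, L^{k_i})$, a uniform OT bound in the psh setting should give $\prod_i \| f_i \|_{L^2_{k_i}(h^L, \mu)} \ge |l|_{h^L_k} \cdot e^{-o(k)}$, the $o(k)$ factor absorbing both the sub-exponential Bergman-kernel loss in each factor and the combinatorial cost of arbitrarily fine decompositions. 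Taking the infimum yields $|l|_{FS(\tilde N_k)} \ge |l|_{h^L_k} \cdot e^{-o(k)}$, so $FS(\tilde N)(x) \ge |\cdot|_{h^L}(x)$ after extracting the $k$-th root and passing to the limit guaranteed by Lemma \ref{lem_fs_dini}.

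\textbf{Conclusion and the continuous case.} Once $FS(\tilde N) = h^L$ is available, Theorem \ref{thm_char2} yields $\tilde N \sim_p {\rm{Ban}}^\infty(FS(\tilde N)) = {\rm{Ban}}^\infty(h^L)$ for every $p \in [1, +\infty[$. Since $\tilde N \le {\rm{Hilb}}(h^L, \mu) \le {\rm{Ban}}^\infty(h^L)$, the min-max formula (\ref{eq_log_rel_spec}) gives $\lambda_j({\rm{Hilb}}_k, {\rm{Ban}}^\infty_k) \le \lambda_j(\tilde N_k, {\rm{Ban}}^\infty_k)$, and since both are non-negative, $d_p({\rm{Hilb}}_k, {\rm{Ban}}^\infty_k) \le d_p(\tilde N_k, {\rm{Ban}}^\infty_k)$, proving ${\rm{Hilb}}(h^L, \mu) \sim_p {\rm{Ban}}^\infty(h^L)$. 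For the continuous case at $p = +\infty$, the auxiliary construction can be bypassed: the Bergman kernel $B_k(x) = \sum_j |e_j(x)|^2_{h^L_k}$ of ${\rm{Hilb}}_k$ satisfies $\frac{1}{k} \log \sup_X B_k \to 0$ uniformly when $h^L$ is continuous psh (via Demailly regularization from below combined with Tian-Zelditch-type asymptotics), so $\|f\|^2_{L^\infty_k(h^L)} \le \sup_X B_k \cdot \|f\|^2_{L^2_k(h^L, \mu)}$ forces $\lambda_1/k \to 0$, giving the $\sim = \sim_{+\infty}$ equivalence.

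\textbf{Main obstacle.} The crux is the lower bound $FS(\tilde N) \ge h^L$: a sub-exponential extension constant must survive the infimum over arbitrarily fine product decompositions in the regularizable-from-above (not necessarily continuous) psh setting. This is a quantitative semi-classical OT statement whose stability under refinement of the decomposition is the non-trivial technical point.
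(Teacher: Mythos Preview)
Your proposal has a fatal circularity: you invoke Theorem \ref{thm_char2} to prove Proposition \ref{prop_ban_hilb_eq}, but in this paper the proof of Theorem \ref{thm_char2} explicitly \emph{uses} Proposition \ref{prop_ban_hilb_eq} (see the line ``From Theorem \ref{thm_darvlurub_quant}, Proposition \ref{prop_ban_hilb_eq} and (\ref{eq_darvas_conv2})\ldots'' in the proof of Theorem \ref{thm_char2}). So the argument, as placed in this paper, is circular. Independently of that, the step you yourself flag as the main obstacle --- the lower bound $FS(\tilde N)\ge h^L$ --- is not actually established: the sketched OT estimate must survive decompositions into arbitrarily many factors (e.g.\ $k$ factors each of degree $1$), and a per-factor loss that is $o(k_i)$ does not sum to $o(k)$ without a uniformity you have not supplied.

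The paper's proof is far more direct and avoids any auxiliary submultiplicative construction. The continuous case $p=+\infty$ is cited as known (essentially the Bergman kernel estimate you sketch, from \cite{FinSecRing}). For regularizable-from-above $h^L$, one picks a decreasing sequence of \emph{continuous} psh metrics $h^L_r\searrow h^L$ a.e., chooses $r_0$ with $d_p(h^L,h^L_{r_0})<\epsilon$ via (\ref{eq_darvas_conv}), and then sandwiches using the chain
\[
{\rm{Hilb}}(h^L,\mu)\le{\rm{Ban}}^\infty(h^L)\le{\rm{Ban}}^\infty(h^L_{r_0})\le e^{\epsilon k}\,{\rm{Hilb}}_k(h^L_{r_0},\mu),
\]
where the last inequality is the already-known continuous case applied to $h^L_{r_0}$. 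The $d_p$-gap between the two Hilbert norms is controlled by Theorem \ref{thm_darvlurub_quant} (Darvas--Lu--Rubinstein), and (\ref{eq_log_rel_spec2}) closes the sandwich. No submultiplicativity, no Theorem \ref{thm_char2}, no OT for singular metrics is needed.
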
	
	\begin{rem}
		By Theorems \ref{thm_darvlurub_quant}, \ref{thm_bouck_erikss} and Proposition \ref{prop_ban_hilb_eq}, regularizable from above psh metrics is the biggest subclass of bounded psh metrics for which the $p$-equivalence above holds.
	\end{rem}
	\begin{proof}
		Since the equivalence relation $\sim_{+ \infty}$ equals to $\sim$, the second part is well-known, cf. \cite[Proposition 2.10]{FinSecRing}.
		To establish the first part, take a decreasing sequence of continuous psh metrics $h^{L}_r$, $r \in \nat$,  as in Definition \ref{defn_regul_above}.
		From (\ref{eq_darvas_conv}), for any $\epsilon > 0$, there is $r_0 \in \nat$, such that
		\begin{equation}\label{eq_ban_hilb_appro0}
			d_p \big( h^L, h^{L}_{r_0} \big)  < \epsilon.
		\end{equation}	
		Remark that we trivially have
		\begin{equation}\label{eq_ban_hilb_appro2}
			{\rm{Hilb}}(h^L, \mu) \leq {\rm{Ban}}^{\infty}(h^L) \leq {\rm{Ban}}^{\infty}(h^{L}_{r_0}).
		\end{equation}
		Since $h^{L}_r$ are continuous and psh, by the second part of Proposition \ref{prop_ban_hilb_eq}, we can find $k_0 \in \nat$, such that for any $k \geq k_0$, we have
		\begin{equation}\label{eq_ban_hilb_appro3}
			{\rm{Ban}}^{\infty}_k(h^{L}_{r_0}) < \exp(\epsilon k) \cdot {\rm{Hilb}}_k(h^{L}_{r_0}, \mu)
		\end{equation}
		From Theorem \ref{thm_darvlurub_quant} and (\ref{eq_ban_hilb_appro0}), we see that
		\begin{equation}\label{eq_ban_hilb_appro4}
			d_p \Big( {\rm{Hilb}}(h^L, \mu), {\rm{Hilb}}(h^{L}_{r_0}, \mu) \Big)  < \epsilon.
		\end{equation}
		From (\ref{eq_log_rel_spec2}), (\ref{eq_ban_hilb_appro2}), (\ref{eq_ban_hilb_appro3}) and (\ref{eq_ban_hilb_appro4}), we conclude that 
		\begin{equation}
			d_p \Big( {\rm{Hilb}}(h^L, \mu), {\rm{Ban}}^{\infty}(h^L) \Big) < 2 \epsilon.
		\end{equation}
		Since $\epsilon > 0$ was chosen arbitrary, this finishes the proof.
	\end{proof}
	
	\begin{cor}\label{cor_baninf_dist}
		For two distinct regularizable from above psh metrics $h^L_i$, $i = 0, 1$, and any $p \in [1, + \infty]$, we have ${\rm{Ban}}^{\infty}(h^L_0) \not\sim_p {\rm{Ban}}^{\infty}(h^L_1)$.
	\end{cor}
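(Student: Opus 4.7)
The plan is to deduce the corollary as an essentially formal consequence of Proposition \ref{prop_ban_hilb_eq} combined with the fact that the $L^2$-norms ${\rm{Hilb}}(h^L_i, \mu)$ associated with distinct bounded psh metrics are not $p$-equivalent. The latter will be extracted from Theorem \ref{thm_darvlurub_quant}.

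First, I would dispose of the case $p \in [1, +\infty[$. Fix a volume form $\mu$ of unit mass on $X$. By Proposition \ref{prop_ban_hilb_eq}, we have ${\rm{Hilb}}(h^L_i, \mu) \sim_p {\rm{Ban}}^{\infty}(h^L_i)$ for $i = 0, 1$. Since $\sim_p$ is an equivalence relation by the discussion following (\ref{eq_john_ellips}), it suffices to show that ${\rm{Hilb}}(h^L_0, \mu) \not\sim_p {\rm{Hilb}}(h^L_1, \mu)$. By Theorem \ref{thm_darvlurub_quant} and the definition (\ref{eq_dp_gr_n_def}), we have
\begin{equation}
d_p\bigl( {\rm{Hilb}}(h^L_0, \mu), {\rm{Hilb}}(h^L_1, \mu) \bigr) = d_p(h^L_0, h^L_1),
\end{equation}
so it remains to argue that the right-hand side is strictly positive when $h^L_0 \neq h^L_1$. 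This uses the fact that $d_p$ is a genuine metric on the finite $p$-energy class $\mathcal{E}_{\omega}^p$, which contains all bounded psh metrics by (\ref{eq_inter_ep}); this is part of Darvas' completion construction of $(\mathcal{E}_{\omega}^p, d_p)$ recalled in Section \ref{sect_pp_thr}. Alternatively, one could invoke (\ref{eq_darvas_conv}) after first reducing to a monotonic pair via $\max(h^L_0, h^L_1)$.

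For the remaining case $p = +\infty$, I would reduce to the previous one. From the definitions in (\ref{eq_dp_defn_norms}) and the subsequent characterization of $d_{+\infty}$, the pointwise inequality $d_p(N, N') \leq d_{+\infty}(N, N')$ holds for any pair of norms on a finite-dimensional vector space. Consequently $\sim_{+\infty}$ implies $\sim_p$ for every $p \in [1, +\infty[$. Hence if ${\rm{Ban}}^{\infty}(h^L_0) \sim_{+\infty} {\rm{Ban}}^{\infty}(h^L_1)$ held, it would contradict the case already established.

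There is no real obstacle: the corollary is a pure consequence of already-proven results, and the only point requiring a sentence of care is the non-degeneracy $d_p(h^L_0, h^L_1) > 0$ for distinct bounded psh metrics, which is immediate from the metric structure of $(\mathcal{E}_{\omega}^p, d_p)$.
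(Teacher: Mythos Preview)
Your proposal is correct and follows essentially the same approach as the paper, which simply cites Theorem \ref{thm_darvlurub_quant} and Proposition \ref{prop_ban_hilb_eq}; you have written out the details (including the reduction of $p=+\infty$ to finite $p$ and the non-degeneracy of $d_p$ on $\mathcal{E}_{\omega}^p$) that the paper leaves implicit.
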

	\begin{proof}
		It follows from Theorem \ref{thm_darvlurub_quant} and Proposition \ref{prop_ban_hilb_eq}.
	\end{proof}	
	
	\begin{cor}\label{cor_psh_appr_below}
		Assume that a decreasing sequence of continuous psh metrics $h^L_i$, $i \in \nat$, is uniformly bounded from below.
		Then for any $p \in [1, +\infty[$, for $h^L := \lim h^L_i$,
		$
			{\rm{Ban}}^{\infty}(h^L) \sim_p {\rm{Ban}}^{\infty}(h^L_*)
		$.
		In particular, for a bounded metric $h^L$ on $L$, we have ${\rm{Ban}}^{\infty}(Q(h^L)) \sim_p {\rm{Ban}}^{\infty}(Q(h^L)_*).$
	\end{cor}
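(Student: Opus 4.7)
The plan is to identify $h^L_*$ as a regularizable-from-above psh metric and then to reduce the statement to the quantization results of Section \ref{sect_pp_thr} by sandwiching ${\rm{Ban}}^\infty(h^L)$ between ${\rm{Ban}}^\infty(h^L_*)$ and ${\rm{Ban}}^\infty(h^L_i)$. First I would fix a smooth positive reference metric $h^L_0$ and write $h^L_i = e^{-\phi_i} h^L_0$; since $h^L_i$ is decreasing and uniformly bounded from below, the weights $\phi_i$ form an increasing uniformly bounded sequence of $\omega$-psh functions. By the classical fact that the upper semi-continuous regularization of an increasing limit of $\omega$-psh functions is $\omega$-psh and coincides with the limit outside a pluripolar set, the metric $h^L_*$ is a bounded psh metric with $h^L = h^L_*$ almost everywhere. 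Hence $h^L_i$ is a decreasing sequence of continuous psh metrics converging a.e. to $h^L_*$, which is therefore regularizable from above in the sense of Definition \ref{defn_regul_above}.

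Next, I would fix a positive volume form $\mu$ of unit volume on $X$ and apply Proposition \ref{prop_ban_hilb_eq} both to the continuous psh metrics $h^L_i$ and to $h^L_*$, obtaining ${\rm{Hilb}}(h^L_i, \mu) \sim_p {\rm{Ban}}^\infty(h^L_i)$ for each $i$ and ${\rm{Hilb}}(h^L_*, \mu) \sim_p {\rm{Ban}}^\infty(h^L_*)$. Darvas's convergence criterion (\ref{eq_darvas_conv}) applied to the monotone sequence $h^L_i$ converging a.e. to $h^L_*$ yields $d_p(h^L_i, h^L_*) \to 0$, which by Theorem \ref{thm_darvlurub_quant} translates into $d_p({\rm{Hilb}}(h^L_i, \mu), {\rm{Hilb}}(h^L_*, \mu)) \to 0$. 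The triangle inequality for $d_p$ on graded norms then gives $d_p({\rm{Ban}}^\infty(h^L_i), {\rm{Ban}}^\infty(h^L_*)) \to 0$ as $i \to \infty$. Since $h^L_* \leq h^L \leq h^L_i$ pointwise, the same ordering holds for the associated sup-norms, and the monotonicity of the logarithmic relative spectrum (\ref{eq_log_rel_spec2}) sandwiches
\begin{equation*}
	d_p({\rm{Ban}}^\infty(h^L_*), {\rm{Ban}}^\infty(h^L)) \leq d_p({\rm{Ban}}^\infty(h^L_*), {\rm{Ban}}^\infty(h^L_i)) \longrightarrow 0,
\end{equation*}
yielding the first assertion.

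For the ``in particular'' statement, a Choquet-type lemma produces a decreasing sequence $g_i$ of continuous psh metrics with $g_i \geq h^L$ whose pointwise limit $g_\infty$ satisfies $(g_\infty)_* = Q(h^L)_*$; applying the first part to this sequence and then sandwiching $Q(h^L)_* \leq Q(h^L) \leq g_\infty$ as above yields the claim. The main subtlety is the pluripotential-theoretic identification of $h^L_*$ as psh and regularizable from above at the beginning; once that is established, the argument is a direct assembly of Proposition \ref{prop_ban_hilb_eq}, Theorem \ref{thm_darvlurub_quant}, Darvas's $d_p$-convergence criterion, and the monotonicity of the logarithmic relative spectrum.
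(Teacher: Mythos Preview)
Your proof is correct and follows essentially the same approach as the paper's: the paper says the result ``follows directly from the proof of Proposition \ref{prop_ban_hilb_eq} and the fact that $h^L_*$ coincides with $h^L$ almost everywhere,'' and you have spelled out exactly that argument---identifying $h^L_*$ as regularizable from above, invoking Darvas's $d_p$-convergence and Theorem \ref{thm_darvlurub_quant}, and sandwiching ${\rm{Ban}}^\infty(h^L)$ via (\ref{eq_log_rel_spec2}). Your treatment of the ``in particular'' clause via a Choquet-type sequence is also the natural way to unpack the paper's remark after (\ref{eq_defn_fs_env}).
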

	\begin{proof}
		It follows directly from the proof of Proposition \ref{prop_ban_hilb_eq} and the fact that $h^L_*$ coincides with $h^L$ almost everywhere, cf. \cite[Proposition I.4.24]{DemCompl}.
	\end{proof}

	\section{Study of the set of submultiplicative norms}\label{sect_class_sn}
	The main goal of this section is to establish a classification of submultiplicative norms on section rings of ample line bundles.
	More precisely, in Section \ref{sect_subm}, we prove Theorems \ref{thm_char},  \ref{thm_char2}, modulo a certain statement, which will be interpreted in Section \ref{sect_norm_proj} in a functional-analytic language.
	In Section \ref{sect_hol_ext_pp}, we discuss an application to holomorphic extension theorem.
	In Section \ref{sect_homog}, as another application of our methods, we give an explicit formula for the spectral radius seminorm associated with a submultiplicative norm and discuss the connection between the current work and the previous works in the non-Archimedean setting.
	
	\subsection{Classification of submultiplicative norms and applications}\label{sect_subm}
		The main goal of this section is to establish Theorems \ref{thm_char}, \ref{thm_char2} giving a characterization of submultiplicative norms in terms of sup-norms, and then to deduce Theorem \ref{thm_ns_conv}.
		We conserve the notation from the introduction.
		Throughout the whole section we assume that $L$ is ample.
	\par 
	For any $r \in \nat^*$, $k; k_1, \ldots, k_r \in \nat$, $k_1 + \cdots + k_r = k$, we define the multiplication map
	\begin{equation}\label{eq_mult_map}
		{\rm{Mult}}_{k_1, \cdots, k_r} : H^0(X, L^{k_1}) \otimes \cdots \otimes H^0(X, L^{k_r}) 
		\to
		H^0(X, L^{\otimes k}),
	\end{equation}	
	as follows $f_1 \otimes \cdots \otimes f_r \mapsto f_1 \cdots f_r$.
	It is standard that there is $p_0 \in \nat^*$, such that for any $k_1, \cdots, k_r \geq p_0$, the map ${\rm{Mult}}_{k_1, \cdots, k_r}$ is surjective, cf. \cite[Proposition 3.1]{FinSecRing}.
	\par 
	Assume now that $k, l \in \nat^*$ are big enough so that ${\rm{Mult}}_{k, l}$ is surjective.
	As we shall later apply the following result only for sufficiently high tensor powers of ample line bundles, we could always reduce to this case.
	A central idea of our approach to Theorems \ref{thm_char}, \ref{thm_char2} is to interpret the submultiplicativity condition in terms of projective tensor norms, see (\ref{eq_defn_proj_norm}).
	In fact, using notations (\ref{eq_defn_quot_norm}), (\ref{eq_defn_proj_norm}), the submultiplicativity condition can be reformulated in terms of inequalities between the norms on $H^0(X, L^{\otimes (k + l)})$ as follows
	\begin{equation}\label{eq_reform_subm_cond}
		N_{k + l} \leq [N_k \otimes_{\pi} N_l].
	\end{equation}
	\par 
 	Assume now for simplicity that $L$ is very ample and all the multiplication maps are surjective.
 	Let $N_1$ be a norm on $H^0(X, L)$.
 	By the surjectivity of the multiplication maps, we endow $H^0(X, L^{\otimes k})$ with the norms $N_k^{\pi} = [N_1 \otimes_{\pi} \cdots \otimes_{\pi} N_1]$ and $N_k^{\epsilon} = [N_1 \otimes_{\epsilon} \cdots \otimes_{\epsilon} N_1]$, where the tensor powers are repeated $k$ times.
 	We denote by $N^{\pi} = \sum N_k^{\pi}$ and $N^{\epsilon} = \sum N_k^{\epsilon}$ the induced graded norms on $R(X, L)$.
 	According to (\ref{eq_reform_subm_cond}), the norm $N^{\pi}$ is the biggest submultiplicative norm on $R(X, L)$, coinciding with $N_1$ on $H^0(X, L)$.
 	Next result, established in Sections \ref{sect_norm_proj} and \ref{sec_norm_poly_big}, lies in the core of our approach to the proofs of Theorems \ref{thm_char} and \ref{thm_char2}.
 	\begin{thm}\label{thm_induc}
 		The norms $N^{\pi}$, $N^{\epsilon}$ and ${\rm{Ban}}^{\infty}(FS(N_1))$ are equivalent.
 	\end{thm}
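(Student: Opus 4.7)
My plan is to reduce Theorem \ref{thm_induc} to the functional-analytic Theorem \ref{thm_sym_equiv}, splitting the argument into an easy chain of inequalities and a converse direction.

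I would first establish the sandwich
\begin{equation*}
{\rm{Ban}}^{\infty}(FS(N_1)) \leq N^{\epsilon} \leq N^{\pi}.
\end{equation*}
The rightmost inequality is immediate from Lemma \ref{lem_inj_proj_bnd_nntr} combined with the monotonicity of quotient norms under the common surjection ${\rm{Mult}}_{1, \ldots, 1}: H^0(X,L)^{\otimes k} \to H^0(X, L^k)$. For the leftmost inequality, given $f \in H^0(X, L^k)$ and any preimage $g$ of $f$ under ${\rm{Mult}}_{1, \ldots, 1}$, the Kodaira functional $\phi_x \in H^0(X, L)^*$ at a point $x \in X$ satisfies
\begin{equation*}
|f(x)|_{FS(N_1)^k} = \frac{|g(\phi_x, \ldots, \phi_x)|}{(\|\phi_x\|^*)^k} \leq \|g\|_{\epsilon}
\end{equation*}
by the defining supremum of the injective tensor norm; taking $\sup_{x \in X}$ and $\inf_{g}$ yields the claim.

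For the converse direction $N^{\pi} \leq e^{\epsilon k} \cdot {\rm{Ban}}^{\infty}(FS(N_1))$, I would use the Kodaira embedding $X \hookrightarrow \mathbb{P}(H^0(X, L)^*)$, together with techniques from \cite{FinSecRing} that allow subexponential comparison of restriction norms between ample subvarieties and ambient projective spaces, to reduce to the model case $X = \mathbb{P}(H^0(X, L)^*)$, $L = \mathscr{O}(1)$. Here $H^0(X, L^k)$ identifies with ${\rm{Sym}}^{k} H^0(X, L)$, the norm ${\rm{Ban}}^{\infty}_k(FS(N_1))$ becomes the polynomial sup-norm on $H^0(X, L)^*$ with respect to $N_1^*$, and the required bound is essentially the content of Theorem \ref{thm_sym_equiv}: the quotient projective tensor norm on the symmetric algebra is subexponentially bounded by the polynomial sup-norm (equivalently, by the injective counterpart).

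The main obstacle, and the subject of Section \ref{sec_norm_poly_big}, is Theorem \ref{thm_sym_equiv} itself. The classical polarization identity yields only an exponential factor $k^{k}/k!$ between the symmetric projective and injective norms, falling short of the subexponential equivalence required. The plan there is to identify ${\rm{Sym}}^{k} H^0(X, L)$ with $H^0(\mathbb{P}(H^0(X, L)^*), \mathscr{O}(k))$ and invoke the Ohsawa-Takegoshi extension theorem -- somewhat paradoxically, a complex-geometric tool in service of a purely functional-analytic statement -- to construct, for each $f$, a decomposition whose projective tensor norm is subexponentially controlled by the sup-norm of $f$.
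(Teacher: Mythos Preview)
Your proposal is correct and follows essentially the same route as the paper: establish the chain ${\rm{Ban}}^{\infty}(FS(N_1)) \leq N^{\epsilon} \leq N^{\pi}$, then for the reverse bound factor ${\rm{Mult}}_{1,\ldots,1}$ through the Kodaira embedding as $\res_{\rm{Kod}} \circ \sym$, invoke Theorem~\ref{thm_sym_equiv} on $\mathbb{P}(H^0(X,L)^*)$, and pass back to $X$ via the semiclassical extension theorem (Theorem~\ref{thm_ot_semi}, your ``techniques from \cite{FinSecRing}''). Your direct Kodaira-functional justification of the leftmost inequality is a minor rephrasing of the paper's citation to \cite[Lemma~4.3]{FinSecRing}, and your reading of where Ohsawa--Takegoshi enters (both in the reduction step and inside the proof of Theorem~\ref{thm_sym_equiv}) matches the paper.
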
 
 	\begin{rem}
 		In \cite[Theorem 4.18]{FinSecRing}, we established a similar statement, where we assumed that $N_1$ is Hermitian and projective/injective tensor norms were replaced by the Hermitian tensor norm.
 		Since according to Lemma \ref{lem_inj_proj_bnd_nntr}, the Hermitian tensor norm is pinched between the injective and projective tensor norms, Theorem \ref{thm_induc} refines \cite[Theorem 4.18]{FinSecRing}.
 		The Hermitian assumption in \cite{FinSecRing} simplified substantially the proof, as it allowed us to do explicit calculations on the projective space, see \cite[the first part of the proof of Theorem 4.18]{FinSecRing}.
 		Circumventing these calculations is exactly the content of Section \ref{sec_norm_poly_big} of this article.
 	\end{rem}
 	To establish Theorem \ref{thm_char}, recall the following basic lemma.
		\begin{lem}\label{lem_fs_dini}
			The sequence of Fubini-Study metrics $FS(N_k)$, $k \in \nat^*$, is submultiplicative for any submultiplicative graded norm $N = \sum N_k$.
			In particular, by Fekete's lemma, the sequence of metrics $FS(N_k)^{\frac{1}{k}}$ on $L$ converges, as $k \to \infty$, to a (possibly only bounded from above and even null) upper semi-continuous metric, which we denote by $FS(N)$.
			We, moreover, have 
			\begin{equation}\label{eq_fek_inf}
				FS(N) = \inf FS(N_k)^{\frac{1}{k}}.
			\end{equation}
			If $N$ is bounded, then $FS(N)_*$ is a regularizable from above psh metric.
			If $FS(N)$ is lower semi-continuous and everywhere non-null, the convergence is uniform and $FS(N)$ is psh.
		\end{lem}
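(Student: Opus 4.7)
I will fix a smooth reference metric $h^L_0$ on $L$ and write $FS(N_k)^{1/k} = e^{-\psi_k} \cdot h^L_0$, where $\psi_k$ is a continuous quasi-psh function by the positivity discussion in Section \ref{sec_fs_bdem}. My plan is to reduce all four assertions of the lemma to statements about the sequence $(\psi_k)$.

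For the submultiplicativity of $FS(N_k)$, I would appeal directly to the infimum characterization of Lemma \ref{lem_fs_inf_d}: given $l_1 \in L^k_x$ and $l_2 \in L^l_x$ and near-minimizing extensions $s_i$ with $s_i(x) = l_i$, the product $s_1 s_2$ extends $l_1 \cdot l_2$, and the submultiplicativity (\ref{eq_subm_s_ring}) of $N$ yields $|l_1 l_2|_{FS(N_{k+l})} \leq |l_1|_{FS(N_k)} \cdot |l_2|_{FS(N_l)}$. Equivalently, the sequence $k \psi_k$ is pointwise superadditive in $k$, so classical Fekete's lemma gives the pointwise convergence $\psi_k \to \psi := \sup_k \psi_k$, translating back to $FS(N_k)^{1/k} \to FS(N) := \inf_k FS(N_k)^{1/k}$ and establishing (\ref{eq_fek_inf}). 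Since $\psi$ is lower semi-continuous as a pointwise supremum of continuous functions, $FS(N) = e^{-\psi} \cdot h^L_0$ is upper semi-continuous, with $\psi = +\infty$ corresponding to the null case.

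Suppose now that $N$ is bounded, i.e.\ $N \geq {\rm{Ban}}^{\infty}(h^L)$ for some bounded metric $h^L$. Monotonicity of Fubini-Study in the norm (immediate from Lemma \ref{lem_fs_inf_d}) together with the trivial bound $FS({\rm{Ban}}^{\infty}_k(h^L)) \geq (h^L)^k$ (pick $y = x$ in the supremum) yields $FS(N_k)^{1/k} \geq h^L$ for all $k$, equivalently a uniform upper bound on $\psi_k$. Choquet's lemma then implies that $\psi^*$ is psh, i.e.\ $FS(N)_*$ is a psh metric. To obtain regularizability from above, I would pass to the dyadic subsequence $FS(N_{2^k})^{1/2^k}$: iterating the submultiplicativity established above yields $FS(N_{2^{k+1}}) \leq FS(N_{2^k})^2$, so this is a decreasing sequence of continuous psh metrics on $L$ converging pointwise, and therefore almost everywhere, to $FS(N)$, which agrees with $FS(N)_*$ outside a pluripolar set.

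Finally, assume $FS(N)$ is lower semi-continuous and everywhere non-null, so $\psi$ is upper semi-continuous and finite-valued. Combined with the lower semi-continuity from the second paragraph, $\psi$ is continuous, hence bounded on the compact manifold $X$. To upgrade the pointwise Fekete convergence to uniform convergence, I would use compactness: for fixed $\epsilon > 0$ and each $x \in X$ pick $k_x$ with $\psi_{k_x}(x) > \psi(x) - \epsilon/2$; by continuity this holds on a neighborhood of $x$, and a finite subcover yields indices $k_1, \ldots, k_N$. Set $K := \mathrm{lcm}(k_1, \ldots, k_N)$; iterated superadditivity gives $\psi_{K} \geq \psi_{k_i}$ everywhere, whence $\psi_K > \psi - \epsilon/2$ globally. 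For $k = qK + r$ with $0 \leq r < K$, one more application of superadditivity gives $\psi_k \geq \psi_K - (K/k) \cdot M$ for a uniform constant $M$ depending only on $\psi_0, \ldots, \psi_{K-1}$ and $\psi_K$, so $\psi_k \to \psi$ uniformly. As $\psi$ is continuous, $\psi = \psi^*$ is psh either directly as a uniform limit of psh functions, or by the third paragraph. The main technical hurdle, in my view, is the regularizability clause of assertion (4): the naive sequence $FS(N_k)^{1/k}$ is not monotonic, so one cannot conclude directly, and both the uniform upper bound on $\psi_k$ from boundedness and the dyadic trick to produce a genuinely decreasing approximating sequence are required.
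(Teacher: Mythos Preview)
Your proof is correct and follows essentially the same route as the paper's: submultiplicativity via Lemma \ref{lem_fs_inf_d}, Fekete for the convergence and infimum formula, the classical fact that the upper regularization of a bounded-above supremum of quasi-psh functions is quasi-psh (the paper cites \cite[Proposition I.4.24]{DemCompl} rather than ``Choquet's lemma'', which is a slight misnomer on your part), and the subadditive Dini argument for uniform convergence. Your explicit dyadic sequence for regularizability from above is a nice concrete instantiation of what the paper only gestures at via the Demailly reference.
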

		\begin{proof}
			The first part follows easily from Lemma \ref{lem_fs_inf_d}.
			The second part follows from Lemma \ref{lem_fs_inf_d} and some classical results, cf. \cite[Proposition I.4.24]{DemCompl}.
			The third part is a consequence of the well-known subadditive analogue of Dini's theorem and a statement asserting that a pointwise limit of subadditive sequence of continuous functions is upper semi-continuous, cf. \cite[Appendix A]{FinSecRing}.
		\end{proof}
	\begin{proof}[Proof of Theorem \ref{thm_char}.]
		Let us fix $\epsilon > 0$.
		By our assumption on the continuity of $FS(N)$ and Lemma \ref{lem_fs_dini}, there is $k_0 \in \nat$, such that for any $k \geq k_0$, we have
		\begin{equation}\label{eq_fs_unif}
			FS(N_k)^{\frac{1}{k}}
			\leq
			\exp(\epsilon / 3) \cdot FS(N).
		\end{equation}
		\par 
		Recall that in \cite[Theorem 1.5]{FinSecRing}, we proved that for any continuous psh metric $h^L$ and a smooth volume form $\mu$, the graded norm ${\rm{Hilb}}(h^L, \mu)$ is multiplicatively generated in the sense of \cite[Definition 1.3]{FinSecRing}.
		This means, in particular, that there is $k_1 \in \nat$, such that for any $k, l \geq k_1$, we have
		\begin{multline}
			\exp(- \epsilon (k + l) / 6) \cdot {\rm{Hilb}}_{k + l}(h^L, \mu) \leq [{\rm{Hilb}}_k(h^L, \mu) \otimes {\rm{Hilb}}_l(h^L, \mu)] 
			\\
			\leq \exp(\epsilon (k + l) / 6) \cdot {\rm{Hilb}}_{k + l}(h^L, \mu),
		\end{multline}
		where ${\rm{Hilb}}_k(h^L, \mu) \otimes {\rm{Hilb}}_l(h^L, \mu)$ is the Hermitian norm on $H^0(X, L^{\otimes k}) \otimes H^0(X, L^{\otimes l})$ induced by ${\rm{Hilb}}_k(h^L, \mu)$ and ${\rm{Hilb}}_l(h^L, \mu)$.
		Remark, however, that by Proposition \ref{prop_ban_hilb_eq}, the graded norms ${\rm{Hilb}}(h^L, \mu)$ and ${\rm{Ban}}^{\infty}(h^L)$ are equivalent.
		Applying this for $h^L := FS(N)$ with the use of Lemma \ref{lem_inj_proj_bnd_nntr}, we see that there is $k_2 \in \nat$, such that for any $k, l \geq k_2$, we have
		\begin{multline}\label{eq_ban_inf_proj}
			\exp(- \epsilon (k + l) / 3) \cdot {\rm{Ban}}^{\infty}_{k + l}(FS(N)) \leq [{\rm{Ban}}^{\infty}_{k}(FS(N)) \otimes_{\pi} {\rm{Ban}}^{\infty}_{l}(FS(N))] 
			\\
			\leq \exp(\epsilon (k + l) / 3) \cdot {\rm{Ban}}^{\infty}_{k + l}(FS(N)).
		\end{multline}
		We fix from now on $k' \geq \max\{ k_0, k_1, k_2 \}$.
		\par 
		Directly from Lemma \ref{lem_fs_inf_d}, we see that for any $k \in \nat^*$, we have
		\begin{equation}\label{eq_norm_with_fs_compar}
			N_k \geq {\rm{Ban}}^{\infty}_{k}(FS(N_k)).
		\end{equation}		 
		In conjunction with (\ref{eq_fek_inf}), we see that for any $k \in \nat^*$, we have
		\begin{equation}\label{eq_pf_fina_thm_10}
			N_k \geq  {\rm{Ban}}^{\infty}_{k}(FS(N)).
		\end{equation}
		\par 
		Now, through iteration of the submultiplicativity condition, (\ref{eq_reform_subm_cond}), for any $l \in \nat^*$, we have
		\begin{equation}\label{eq_nkl_bnd_fs}
			N_{k' l} \leq [N_{k'} \otimes_{\pi} \cdots \otimes_{\pi} N_{k'}],
		\end{equation}
		where the tensor product is repeated $l$ times.
		By the application of Theorem \ref{thm_induc}, (\ref{eq_fs_unif}) and (\ref{eq_nkl_bnd_fs}), we see that there is $l_0 \in \nat^*$, such that for any $l \geq l_0$, we have
		\begin{equation}\label{eq_nkl_bnd_fsa}
			N_{k' l} \leq \exp(2 \epsilon k'l / 3) \cdot {\rm{Ban}}^{\infty}_{k'l}(FS(N)).
		\end{equation}
		\par 
		Remark that since the spaces $H^0(X, L^p)$, $p = k', \ldots, 2k' - 1$, are finite dimensional, the norms $N_p$ and ${\rm{Ban}}^{\infty}_{p}(FS(N))$ are comparable up to a uniform constant. 
		From this and (\ref{eq_ban_inf_proj}), we deduce that there is $l_1 \in \nat$, such that for any $0 \leq r \leq k' - 1$, $l \geq l_1$, we have
		\begin{equation}\label{eq_pf_fina_thm_8}
			[{\rm{Ban}}^{\infty}_{k' l}(FS(N)) \otimes_{\pi} N_{k' + r}] \leq \exp(\epsilon k'l / 4) \cdot {\rm{Ban}}^{\infty}_{k'(l + 1) + r}(FS(N)).
		\end{equation}		 
		A combination of (\ref{eq_reform_subm_cond}), (\ref{eq_nkl_bnd_fsa}) and (\ref{eq_pf_fina_thm_8}) yields for $k \geq 2 k' \max \{ l_0, l_1\}$ the following estimate
		\begin{equation}\label{eq_pf_fina_thm_9}
			N_{k} \leq \exp(\epsilon k) \cdot {\rm{Ban}}^{\infty}_{k}(FS(N)).
		\end{equation}
		The result now follows directly from (\ref{eq_pf_fina_thm_10}) and (\ref{eq_pf_fina_thm_9}).
	\end{proof}
	\begin{rem}
		Similarly to \cite[Definition 1.3]{FinSecRing}, one can lighten the submultiplicativity assumption by requiring that there is $p_0 \in \nat$ and $f : \nat_{\geq p_0} \to \real$, verifying $f(k) = o(k)$, as $k \to \infty$, such that for any $r \in \nat^*$, $k; k_1, \ldots, k_r \geq p_0$, $k_1 + \cdots + k_r = k$, $f_i \in H^0(X, L^{k_i})$, $i = 1, \cdots, r$, we have
		\begin{equation}\label{eq_mult_gen}
			\| f_1 \cdots f_r \|_{k} \leq 
			\| f_1 \|_{k_1} \cdots \| f_r \|_{k_r}
			 \cdot
			 \exp \Big(
			 f(k_1) + \cdots + f(k_r) + f(k)
			 \Big).
		\end{equation}
		The proof in this case remains the same with only one modification: instead of the usual Fekete's lemma for the proof of the convergence of $FS(N_k)^{\frac{1}{k}}$, one needs to rely on \cite[Appendix A]{FinSecRing}.	
	\end{rem}
	\begin{proof}[Proof of Theorem \ref{thm_char2}]
		By our boundness assumption, the fact that $FS(N_{2^k})^{\frac{1}{2^k}}$, $k \in \nat$, decrease and Lemma \ref{lem_fs_dini}, we conclude from (\ref{eq_darvas_conv}) that for any $\epsilon > 0$, there is $r \in \nat^*$, such that
		\begin{equation}\label{eq_darvas_conv2}
			d_p \Big( FS(N)_*, FS(N_r)^{\frac{1}{r}} \Big) \leq \epsilon / 2.
		\end{equation}
		From Theorem \ref{thm_darvlurub_quant}, Proposition \ref{prop_ban_hilb_eq} and (\ref{eq_darvas_conv2}), we conclude that 
		\begin{equation}\label{eq_nkl_bnd_fsa2244}
			d_p \Big( {\rm{Ban}}^{\infty}(FS(N)_*), {\rm{Ban}}^{\infty}(FS(N_r)^{\frac{1}{r}}) \Big) \leq \epsilon / 2.
		\end{equation}
		From the proof of Theorem \ref{thm_char}, there is $k_0 \in \nat$, such that for any $k \geq k_0$, we have
		\begin{equation}\label{eq_nkl_bnd_fsa22}
			N_k \leq \exp(\epsilon k / 2) \cdot {\rm{Ban}}^{\infty}_{k}(FS(N_r)^{\frac{1}{r}}).
		\end{equation}
		From (\ref{eq_log_rel_spec2}), (\ref{eq_pf_fina_thm_10}), (\ref{eq_nkl_bnd_fsa2244}) and (\ref{eq_nkl_bnd_fsa22}), we conclude that  $d_p({\rm{Ban}}^{\infty}(FS(N)_*), N) \leq \epsilon$. 
		This finishes the proof by Corollary \ref{cor_psh_appr_below}, as $\epsilon > 0$ was chosen arbitrary.
	\end{proof}
	\par 
	We define for two bounded metrics $h^L_0$, $h^L_1$ on $L$, the distance $d_{+\infty} ( h^L_0, h^L_1)$ as the minimal constant $C \geq 0$ verifying $h^L_0 \leq e^C \cdot h^L_1$ and $h^L_1 \leq e^C \cdot h^L_0$.
	The following corollary establishes the relation between distances of submultiplicative graded norms and their Fubini-Study potentials.
	\begin{cor}\label{cor_d_p_norm_fs_rel}
		For any bounded submultiplicative graded norms $N, N'$ on a section ring $R(X, L)$, and any $p \in [1, +\infty[$, we have
		\begin{equation}
			d_p(N, N') 
			=
			d_p \big( FS(N)_*, FS(N')_* \big). 
		\end{equation}
		If, moreover, $FS(N)$, $FS(N')$ are continuous, then one can take $p = +\infty$ above.
	\end{cor}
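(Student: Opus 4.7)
\emph{Proof proposal.} The plan is to reduce the identity to the quantisation result of Darvas--Lu--Rubinstein (Theorem \ref{thm_darvlurub_quant}) via the classification results already established.

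First, since both $N$ and $N'$ are bounded and submultiplicative, Theorem \ref{thm_char2} gives $N \sim_p {\rm{Ban}}^\infty(FS(N))$ and $N' \sim_p {\rm{Ban}}^\infty(FS(N'))$ for any $p \in [1, + \infty[$. Because $d_p$ on graded norms is symmetric and satisfies the triangle inequality (as recalled after (\ref{eq_dp_gr_n_def})), the relation $\sim_p$ has $d_p$-pseudodistance zero, so
\begin{equation}
	d_p(N, N') = d_p \big( {\rm{Ban}}^\infty(FS(N)), {\rm{Ban}}^\infty(FS(N')) \big).
\end{equation}

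Next, by Lemma \ref{lem_fs_dini}, $FS(N)_*$ and $FS(N')_*$ are regularizable from above psh metrics on $L$. Since the pointwise norms of holomorphic sections are continuous and $FS(N) = FS(N)_*$ outside a pluripolar (hence nowhere dense) set, we have the identity of sup-norms ${\rm{Ban}}^\infty(FS(N)) = {\rm{Ban}}^\infty(FS(N)_*)$, and similarly for $N'$. Proposition \ref{prop_ban_hilb_eq} then yields
\begin{equation}
	{\rm{Ban}}^\infty(FS(N)_*) \sim_p {\rm{Hilb}}(FS(N)_*, \mu),
	\qquad
	{\rm{Ban}}^\infty(FS(N')_*) \sim_p {\rm{Hilb}}(FS(N')_*, \mu),
\end{equation}
for any $p \in [1, +\infty[$, where $\mu$ is any fixed positive volume form of unit mass. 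Applying the triangle inequality once more reduces the statement to the equality
\begin{equation}
	d_p \big( {\rm{Hilb}}(FS(N)_*, \mu), {\rm{Hilb}}(FS(N')_*, \mu) \big) = d_p \big( FS(N)_*, FS(N')_* \big),
\end{equation}
which is precisely the first part of Theorem \ref{thm_darvlurub_quant}.

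For the case $p = + \infty$ under the continuity assumption on $FS(N), FS(N')$, the same scheme works but we invoke Theorem \ref{thm_char} (giving $N \sim {\rm{Ban}}^\infty(FS(N))$) in place of Theorem \ref{thm_char2}. The reduction leaves us to check that for continuous psh $h^L_0, h^L_1$, one has $d_{+ \infty}({\rm{Ban}}^\infty(h^L_0), {\rm{Ban}}^\infty(h^L_1)) = d_{+ \infty}(h^L_0, h^L_1)$. The inequality $\leq$ is immediate from the definition of sup-norms. For the opposite direction, monotonicity of the Fubini-Study operator (from Lemma \ref{lem_fs_inf_d}) together with Theorem \ref{thm_bouck_erikss} gives $FS({\rm{Ban}}^\infty_k(h^L_i))^{1/k} \to h^L_i$, so any multiplicative bound between the graded norms passes to the limiting metrics.

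The main potential subtlety is the identification ${\rm{Ban}}^\infty(FS(N)) = {\rm{Ban}}^\infty(FS(N)_*)$, which is needed to align the sup-norm side of the argument with the psh metric $FS(N)_*$ appearing in the statement; this relies on the pluripolarity of the locus where the upper semicontinuous $FS(N)$ differs from its lower semicontinuous regularization, combined with the continuity of $|f|_{h^L}$ for holomorphic $f$. Everything else is a bookkeeping exercise in the triangle inequality for $d_p$.
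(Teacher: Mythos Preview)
Your overall strategy matches the paper's proof exactly: reduce via Theorem~\ref{thm_char2} (resp.\ Theorem~\ref{thm_char}) to sup-norms, pass to Hilbert norms via Proposition~\ref{prop_ban_hilb_eq}, and invoke Theorem~\ref{thm_darvlurub_quant}. The $p=+\infty$ case is also handled correctly; your derivation of $d_{+\infty}({\rm{Ban}}^\infty(h^L_0),{\rm{Ban}}^\infty(h^L_1))=d_{+\infty}(h^L_0,h^L_1)$ is a valid substitute for the paper's citation of \cite[Theorem 1.7]{FinSecRing}.

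There is, however, a genuine gap in your step~3. You claim the \emph{equality} ${\rm{Ban}}^\infty(FS(N)) = {\rm{Ban}}^\infty(FS(N)_*)$, arguing that ``the pointwise norms of holomorphic sections are continuous''. This is false: $|f|_{FS(N)}$ is only upper semi-continuous in general, since $FS(N)$ itself is only usc. While it is true that $FS(N)=FS(N)_*$ outside a pluripolar (hence nowhere dense) set, this does \emph{not} force the suprema to agree: an usc function can achieve its supremum precisely on that pluripolar set. Concretely, writing $FS(N)=e^{-\phi}h^L_0$ with $\phi=\sup_k\phi_k$ an increasing limit of continuous $\omega$-psh functions, the set $\{\phi<\phi^*\}$ can be nonempty (this is exactly the failure of $L$-regularity in Siciak-type extremal constructions), and at such points the sup-norm with respect to $FS(N)$ can be strictly larger than with respect to $FS(N)_*$.

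The fix is exactly what the paper does: replace your claimed equality by the weaker $\sim_p$-equivalence ${\rm{Ban}}^\infty(FS(N)) \sim_p {\rm{Ban}}^\infty(FS(N)_*)$ for $p\in[1,+\infty[$, which is the content of Corollary~\ref{cor_psh_appr_below}. This suffices for the triangle-inequality reduction, and the rest of your argument goes through unchanged.
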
	
	\begin{proof}
		For $p = [1, +\infty[$, the statement follows directly from Theorems \ref{thm_char2}, \ref{thm_darvlurub_quant}, Proposition \ref{prop_ban_hilb_eq} and Corollary \ref{cor_psh_appr_below}.
		For $p = +\infty$, the statement is a consequence of Theorem \ref{thm_char}, Proposition \ref{prop_ban_hilb_eq} and \cite[Theorem 1.7]{FinSecRing}.
	\end{proof}
	\par 
	One can verify, cf. \cite[Lemma 4.12]{FinSecRing}, that for any graded norms $N, N'$, for which the sequences $FS(N_k)^{\frac{1}{k}}$, $FS(N'_k)^{\frac{1}{k}}$, $k \in \nat$, converge uniformly to some metrics $FS(N)$, $FS(N')$ on $L$, we have $d_{+\infty} ( FS(N), FS(N')) \leq  d_{+ \infty}(N, N')$.
	It is tempting to think that a similar conclusion holds for $d_p$-distances, $p \in [1, +\infty[$, or even the submultiplicativity assumption in Corollary \ref{cor_d_p_norm_fs_rel} is superfluous.
	It is not the case.
	The following example shows that the $d_p$-distances between norms and their Fubini-Study metrics are essentially unrelated. 
	\begin{prop}\label{prop_ex_dp_nnwork}
		There is a bounded graded Hermitian norm $H = \sum H_k$ on $R(\mathbb{P}^1, \mathscr{O}(1))$, such that $FS(H_k)^{\frac{1}{k}}$ converge uniformly, as $k \to \infty$, to a (continuous psh) metric $FS(H)$ on $L$, and there is a continuous psh metric $h^L \neq FS(H)$, for which $H \sim_p {\rm{Ban}}^{\infty}(h^L)$ for any $p \in [1, +\infty[$.
	\end{prop}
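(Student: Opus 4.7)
The plan is to perturb the $L^2$-norm ${\rm{Hilb}}_k(h^L, \mu)$ in a single orthogonal direction at each level $k$. Take $h^L$ to be the standard Fubini-Study metric on $\mathscr{O}(1) \to \mathbb{P}^1$ and $\mu$ its induced volume form, and set $H_k^{(0)} := {\rm{Hilb}}_k(h^L, \mu)$; the monomials $z_0^j z_1^{k-j}$ are orthogonal under $H_k^{(0)}$. Fix a constant $c > 0$ and define $H_k$ to be the Hermitian norm preserving the decomposition $\comp z_0^k \oplus (\comp z_0^k)^{\perp}$, agreeing with $H_k^{(0)}$ on the orthogonal complement, and scaling $\|z_0^k\|_{H_k} := e^{-ck}\|z_0^k\|_{H_k^{(0)}}$. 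Then $H_k \geq e^{-ck} H_k^{(0)}$, so comparing with ${\rm{Ban}}^{\infty}_k(a h^L)$ for $a > 0$ sufficiently small shows that $H := \sum H_k$ is a bounded graded Hermitian norm.

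For the $p$-equivalence, the logarithmic relative spectrum of $H_k$ versus $H_k^{(0)}$ has a single nonzero entry $ck$ among its $k+1$ eigenvalues, so $d_p(H_k, H_k^{(0)}) = ck(k+1)^{-1/p}$ for every $p \in [1,+\infty[$. Hence $\tfrac{1}{k} d_p(H_k, H_k^{(0)}) \to 0$ and $H \sim_p H^{(0)}$. Applying Proposition \ref{prop_ban_hilb_eq} to the continuous psh metric $h^L$ yields $H^{(0)} \sim_p {\rm{Ban}}^{\infty}(h^L)$, so by the triangle inequality $H \sim_p {\rm{Ban}}^{\infty}(h^L)$.

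To identify the Fubini-Study limit, I use the Bergman formula: for a Hermitian norm on $H^0(X, L^k)$ with orthonormal basis $(s_j)$, Lemma \ref{lem_fs_inf_d} combined with Cauchy-Schwarz gives $|e_L^k|_{FS(H)}^{-2}(x) = B_H(x) := \sum_j |s_j(x)|^2$. Passing from the $H_k^{(0)}$-orthonormal basis of normalised monomials to the $H_k$-orthonormal one multiplies $s_0 = \sqrt{k+1}\, z_0^k$ by $e^{ck}$, giving
\begin{equation*}
	B_{H_k}(x) = (e^{2ck} - 1)|s_0(x)|^2 + B_{H_k^{(0)}}(x).
\end{equation*}
A direct binomial calculation on $\mathbb{P}^1$ yields $B_{H_k^{(0)}} \equiv k+1$ and $|s_0(x)|^2 = (k+1)|z_0(x)|_{FS}^{2k}$, so with $u := |z_0(x)|_{FS}^2 \in [0,1]$ one reduces to uniform convergence on $[0,1]$ of $\bigl[e^{2ck} u^k + 1 - u^k\bigr]^{1/k}$ to $\max(1, e^{2c} u)$. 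This then yields uniform convergence of $FS(H_k)^{1/k}$ to a continuous psh metric $FS(H)$ (psh being preserved under uniform limits of psh metrics) that visibly disagrees with $h^L$ on the nonempty open set $\{|z_0|_{FS}^2 > e^{-2c}\}$, completing the proof.

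The main technical point is the uniform convergence of the $k$-th roots above. The upper bound follows from $A + B \leq 2\max(A, B)$, giving $\bigl[e^{2ck} u^k + 1 - u^k\bigr]^{1/k} \leq 2^{1/k}\max(1, e^{2c} u) \to \max(1, e^{2c} u)$. For the lower bound, on $\{u \geq e^{-2c}\}$ one uses $e^{2ck} u^k \geq (e^{2c} u)^k = \max(1, e^{2c} u)^k$, while on $\{u \leq e^{-2c}\}$ one uses $1 - u^k \geq 1 - e^{-2ck}$ together with $(1 - e^{-2ck})^{1/k} \to 1$ uniformly; continuity of $u \mapsto \max(1, e^{2c} u)$ takes care of the transition zone where both summands are comparable.
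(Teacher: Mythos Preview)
Your proof is correct and follows essentially the same route as the paper: perturb the $L^2$-norm ${\rm{Hilb}}_k(h^{FS})$ on $R(\mathbb{P}^1,\mathscr{O}(1))$ in the single monomial direction $z_0^k$ by an exponential factor, then read off the $p$-equivalence from the one nonzero eigenvalue of the relative spectrum and compute the Fubini-Study limit via the Bergman kernel. The only cosmetic differences are that you use a fixed scaling $e^{-ck}$ while the paper takes the specific value $\|x^k\|_{H_k}^2 = (2^k+k+1)^{-1}$, and you spell out the uniform convergence of $[e^{2ck}u^k+1-u^k]^{1/k}$ more carefully than the paper does.
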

	\begin{sloppypar}
	\begin{rem}
		In particular, by Corollary \ref{cor_psh_appr_below}, for any $p \in [1, +\infty[$, we have $d_p(H, {\rm{Ban}}^{\infty}(FS(H))) \neq 0$ and $d_p(H, {\rm{Ban}}^{\infty}(h^L)) = 0$, while $d_p(FS(H), h^L) \neq 0$.
	\end{rem}
	\end{sloppypar}
	\begin{proof}
		Our proof is a slight modification of \cite[Proposition 4.16]{FinSecRing}.
		Let us identify $\mathbb{P}^1$ to $\mathbb{P}(V^*)$, where $V$ is a vector space generated by two elements: $x$ and $y$.
		Let us consider a metric $H$ on $V$, which makes $x$ and $y$ an orthonormal basis, and denote by $h^{FS}$ the induced Fubini-Study metric on $\mathscr{O}(1)$.
		For any $k \in \nat^*$, $a, b \in \nat$, $a + b = k$, under the isomorphism ${\rm{Sym}}^k(V) \to H^0(\mathbb{P}(V^*), \mathscr{O}(k))$, an easy calculation shows that we have
		\begin{equation}\label{eq_l2_norm_calc}
			\big\| x^a \cdot y^b \big\|_{{\rm{Hilb}}_k(h^{FS})}^2
			=
			\frac{a! b!}{(k + 1)!}.
		\end{equation}
		\par
		Let us consider a Hermitian norm $H_k$ on $H^0(\mathbb{P}(V^*), \mathscr{O}(k))$, for which the basis $x^a \cdot y^b$ is orthogonal and in the above notations, we have
		\begin{equation}
			\big\| x^a \cdot y^b \big\|_{H_k}^2
			=
			\begin{cases}
				\frac{1}{2^k + k + 1}, &\text{if } b = 0 \\
				\big\| x^a \cdot y^b \big\|_{{\rm{Hilb}}_k(h^{FS})}^2 ,  &\text{otherwise}.
			\end{cases}
		\end{equation}
		We will now verify that $H_k$ satisfies the assumptions of the proposition.
		\par 
		First of all, from Proposition \ref{prop_ban_hilb_eq}, it is trivial to verify that for any $p \in [1, +\infty[$, we have $H \sim_p {\rm{Ban}}^{\infty}(h^L)$.
		Remark also that $H$ is bounded by Proposition \ref{prop_ban_hilb_eq}, since we have $H \geq {\rm{Hilb}}(h^{FS} / 3)$.
		From Lemma \ref{lem_fs_inf_d}, for any $a, b \in \comp$, not simultaneously equal to zero, we have
	 	\begin{equation}\label{eq_expl_exmpl11}
	 		\frac{FS({\rm{Hilb}}_{k}(h^{FS}))^2}{FS(H_{k})^2} \Big([a x^* + b y^*] \Big)
	 		=
	 		\frac{2^k |a|^k + (k + 1) (|a| + |b|)^{k}}{(k + 1) (|a| + |b|)^{k}}.
	 	\end{equation}
	 	In particular, we conclude that 
	 	\begin{equation}
	 		\lim_{k \to \infty} \bigg( \frac{FS(H_{k})}{FS({\rm{Hilb}}_{k}(h^{FS}))} \bigg)^{\frac{2}{k}} \Big([a x^* + b y^*] \Big)
	 		=
	 		\frac{\max\{2 |a|, |a| + |b|\}}{|a| + |b|},
	 	\end{equation}
	 	and the convergence is uniform. 
	 	This finishes the proof by Tian's theorem, cf. Theorem \ref{thm_bouck_erikss}.
	\end{proof}
	\par
	We will now show that one cannot take $p = + \infty$ in Theorem \ref{thm_char2}.
	Recall that to any complex normed commutative ring $(A, \| \cdot \|_A)$, one can associate the seminorm $\| \cdot \|_A^{{\rm{hom}}}$, sometimes called the \textit{homogenization} or \textit{spectral radius seminorm}, defined as follows
	\begin{equation}\label{eq_homog_defn}
		\| f \|_A^{{\rm{hom}}}
		:=
		\lim_{k \to \infty} \| f^k \|_A^{\frac{1}{k}}.
	\end{equation}
	The existence of the limit above is assured by Fekete's lemma.
	
	\begin{prop}\label{prop_example}
		There is a bounded submultiplicative graded norm $N = \sum N_k$ on a section ring $R(X, L)$, such that $N \not\sim {\rm{Ban}}^{\infty}(h^L)$ for any bounded metric $h^L$ on $L$.
	\end{prop}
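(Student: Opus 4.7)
The plan is to exhibit an explicit bounded submultiplicative norm on $R(\mathbb{P}^1, \mathscr{O}(1))$ that involves first-jet data of sections at a point, so that it cannot be captured up to $\sim$-equivalence by any sup-norm, which only reads pointwise values.

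Fix the Fubini-Study metric $h^{FS}$ on $\mathscr{O}(1)$, a point $p := [1:0] \in \mathbb{P}^1$, a constant $A > 1$, and let $\nabla$ denote the Chern connection of $h^{FS}$ extended to tensor powers. Endow $T^*_p \mathbb{P}^1$ with any fixed Hermitian metric. For $f \in H^0(\mathbb{P}^1, \mathscr{O}(k))$ I would set
\begin{equation*}
	N_k(f) := \sup_{x \in X} |f(x)|_{h^{FS}} + A^k \Big( |f(p)|_{h^{FS}} + |(\nabla f)(p)|_{h^{FS}} \Big).
\end{equation*}
The inequality $N_k \geq {\rm{Ban}}^{\infty}_k(h^{FS})$ gives boundedness, and the triangle inequality and absolute homogeneity are clear. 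Submultiplicativity $N_{k+l}(fg) \leq N_k(f) N_l(g)$ follows by expansion: the Leibniz rule $(\nabla(fg))(p) = (\nabla f)(p) \otimes g(p) + f(p) \otimes (\nabla g)(p)$ shows that the three "exponential" contributions to $N_{k+l}(fg)$, namely $A^{k+l}|f(p)||g(p)|$, $A^{k+l}|\nabla f(p)||g(p)|$ and $A^{k+l}|f(p)||\nabla g(p)|$, all appear with unit coefficient in the expansion of $N_k(f)N_l(g)$, accompanied by further non-negative terms.

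Suppose for contradiction that $N \sim {\rm{Ban}}^{\infty}(\tilde h^L)$ for some bounded $\tilde h^L$. Writing $x, y$ for homogeneous coordinates with $y(p) = 0$, I would test the purported equivalence on the two sequences $s_1^k := y\, x^{k-1}$ and $s_2^k := y^2\, x^{k-2}$. A direct computation gives $N_k(s_1^k) \sim A^k$, since $(\nabla s_1^k)(p) = (\nabla y)(p) \otimes x^{k-1}(p)$ has positive constant norm, while $N_k(s_2^k) = \sup_X |s_2^k|_{h^{FS}} = O(1/k)$, because the vanishing of $y^2$ to order two at $p$ forces both $s_2^k(p)$ and $(\nabla s_2^k)(p)$ to be zero. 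Hence $N_k(s_1^k)/N_k(s_2^k) \sim k A^k$ grows exponentially in $k$.

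By Theorem \ref{thm_bouck_erikss} one may assume $\tilde h^L = Q(\tilde h^L)$ is regularizable from above psh. A Laplace-method analysis of $\sup|y^a x^{k-a}|_{\tilde h^L}$, localized near the point where $|x|_{\tilde h^L}$ attains its maximum, shows that both suprema concentrate around a common limit point at rate $\sim 1/\sqrt{k}$ and are driven by the same local value of $\tilde h^L$, so that $\sup|s_1^k|_{\tilde h^L}/\sup|s_2^k|_{\tilde h^L} = O(\sqrt{k})$, polynomial in $k$. Consequently
\begin{equation*}
	\frac{N_k(s_1^k)/N_k(s_2^k)}{\sup|s_1^k|_{\tilde h^L}/\sup|s_2^k|_{\tilde h^L}} \gtrsim \sqrt{k}\, A^k
\end{equation*}
grows exponentially, contradicting the subexponential gap required by $\sim$-equivalence. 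The main obstacle is executing the Laplace asymptotics uniformly over the class of regularizable-from-above psh $\tilde h^L$: I would exploit the pluripolarity of the discontinuity set of $Q(\tilde h^L)$ (Remark \ref{rem_regul_above}a) together with its upper semi-continuity, which together imply that the suprema are governed by the locally-bounded behavior of $\tilde h^L$ away from a negligible set, making the asymptotic comparison rigorous.
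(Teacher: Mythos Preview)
Your construction of $N$ is correct and is a genuinely different example from the paper's. The paper instead takes the ray of norms (\ref{eq_ray_norm_defn0}) associated to the filtration by order of vanishing along a divisor (truncated at order~$2$), but both examples exploit the same phenomenon: first-jet data at a point cannot be captured by a sup-norm. Your verification of boundedness and submultiplicativity via the Leibniz rule is fine, and the computations $N_k(s_1^k)\sim A^k$, $N_k(s_2^k)=\|s_2^k\|_{L^\infty_{h^{FS}}}$ are correct.

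The gap is in the final step. Your claimed bound $\|s_1^k\|_{\tilde h}/\|s_2^k\|_{\tilde h}=O(\sqrt{k})$ is already false for smooth positive metrics: if $|x|^2_{\tilde h}$ has a maximum at $p$ with local behaviour $M(1-c|w|^{2\alpha})$, a routine Laplace computation gives the ratio $\sim k^{1/(2\alpha)}$, which can have arbitrarily large polynomial order. What you actually need is merely that the ratio is \emph{subexponential}, but the sketch you give (``Laplace asymptotics uniformly over regularizable-from-above psh metrics, using pluripolarity of the discontinuity set'') is not a proof, and you yourself flag it as the main obstacle. For continuous $\tilde h$ one can indeed show $\|s_i^k\|_{\tilde h}^{1/k}\to \|x\|_{\tilde h}$ for $i=1,2$ by an elementary $\epsilon$--$\delta$ argument, but pushing this to general regularizable-from-above metrics is not automatic.

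There is a two-line fix that avoids all of this, and it is exactly the paper's argument. Sup-norms are power-multiplicative, $\|f^m\|_{L^\infty(\tilde h)}=\|f\|_{L^\infty(\tilde h)}^m$, so ${\rm Ban}^\infty(\tilde h)^{\rm hom}={\rm Ban}^\infty(\tilde h)$; hence $N\sim{\rm Ban}^\infty(\tilde h)$ would force $N\sim N^{\rm hom}$. Now take your section $f=s_1^k=yx^{k-1}$: since $f^m$ vanishes to order $m\geq 2$ at $p$ for every $m\geq 2$, the jet terms in $N_{mk}(f^m)$ vanish and $N_{mk}(f^m)=\|f\|_{L^\infty_{h^{FS}}}^m$, whence $N_k^{\rm hom}(f)=\|f\|_{L^\infty_{h^{FS}}}\leq 1$. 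But $N_k(f)\sim A^k$, so $N_k(f)/N_k^{\rm hom}(f)\gtrsim A^k$, contradicting $N\sim N^{\rm hom}$. This uses only $s_1^k$ (your $s_2^k$ is unnecessary) and dispenses with any analysis of $\tilde h$.
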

	\begin{proof}
		We fix a bounded metric $h^L$ on $L$, an effective divisor $D \subset X$ and consider the ray of norms $N_k^{t} = \| \cdot \|_k^{t}$, $t \in [0, +\infty[$, constructed by the procedure (\ref{eq_ray_norm_defn2}) from the norm ${\rm{Ban}}^{\infty}(h^L)$ and the filtration, for which jumping numbers are given by $ik$, $i = 0, 1, 2$, and such that $\mathcal{F}^{ik} H^0(X, L^{\otimes k}) = H^0(X, L^{\otimes k} \otimes \mathcal{J}_D^i)$, where $\mathcal{J}_D$ is a sheaf of holomorphic germs vanishing along $D$.
		\par 
		Let us verify that the graded norm $N := \sum N_k^{1}$ provides an example for Proposition \ref{prop_example}.
		An easy verification shows that it is submultiplicative.
		It is also trivially bounded from below by ${\rm{Ban}}^{\infty}(e^{-2}h^L)$.
		Let us show that $N \not\sim {\rm{Ban}}^{\infty}(h^L)$ for any metric $h^L$ on $L$.
		\par 
		Indeed, let us consider a sequence of elements $f_k \in H^0(X, L^{\otimes k} \otimes \mathcal{J}_D) \setminus H^0(X, L^{\otimes k} \otimes \mathcal{J}_D^2)$. 
		The existence of such $f_k$ for $k$ large enough is assured by ampleness of $L$ and effectivity of $D$.
		We denote $g_k := f_k - P_k(f_k)$, where $P_k(f_k)$ is a projection (with respect to the norm ${\rm{Ban}}^{\infty}_k(h^L)$) of $f_k$ to $H^0(X, L^{\otimes k} \otimes \mathcal{J}_D^2)$.
		We then see that $
			\| g_k \|_k^{1}
			=
			e^{- k} \cdot 
			\| g_k \|_{L^{\infty}_{k}(X, h^L)}
		$.
		However, in the notations of Section \ref{sect_homog}, we obviously have $\| g_k \|_k^{1, \rm{hom}}
			\leq
			\sqrt{
				\| g_k^2 \|^{1}_{2k}
			}$. 
		Since $g_k^2 \in H^0(X, L^{2k} \otimes \mathcal{J}_D^2)$, we deduce that
		$
			\| g_k \|_k^{1, \rm{hom}}
			\leq
			e^{- 2k} \cdot 
			\| g_k \|_{L^{\infty}_{k}(X, h^L)}
		$.
		Hence, $N \not\sim N^{{\rm{hom}}}$, which implies that $N \not\sim {\rm{Ban}}^{\infty}(h^L)$ for any bounded metric $h^L$ on $L$.
	\end{proof}
	\begin{rem}\label{rem_ex_non_cont_ray}
		Our example is given by a ray of submultiplicative norms constructed using a non-Archimedean submultiplicative norm from Boucksom-Jonsson \cite[Example 2.25]{BouckJohn21} through a general procedure outlined in (\ref{eq_ray_norm_defn0}).
		As it follows from Theorems \ref{thm_char} and \ref{thm_ray_geod_bergm}, the reason why we have $N \not\sim {\rm{Ban}}^{\infty}(FS(N))$ is that the Fubini-Study geodesic ray associated with our filtration turns out to be non-continuous, see Section \ref{sect_pf_filt} for details.
	\end{rem}
	
	\subsection{Semiclassical holomorphic extension theorem and pluripolar sets}\label{sect_hol_ext_pp}
	The main goal of this section is to deduce from Theorem \ref{thm_char2} a characterization of submanifolds for which a weak version of semiclassical Ohsawa-Takegoshi extension theorem holds.
	\par 
	Recall that Ohsawa-Takegoshi in \cite{OhsTak1} gave a sufficient condition under which a holomorphic section of a vector bundle on a submanifold extends to a holomorphic section over an ambient manifold with a reasonable bound on the $L^2$-norm of the extension in terms of the $L^2$-norm of the section. 
	Later in \cite{FinOTAs}, the author proved a more precise statement in the semiclassical limit, i.e. when the vector bundle is given by a sufficiently high tensor power of a fixed positive line bundle.
	In particular, in \cite[Theorem 1.1]{FinOTAs}, we established an asymptotically optimal semiclassical version of Ohsawa-Takegoshi extension theorem and in \cite[Theorem 1.10]{FinOTAs} we proved its version for sup-norms. See also \cite[Theorems 1.1 and 1.3]{FinOTRed} for a more general statement about jet extensions and Zhang \cite{ZhangPosLinBun}, Bost \cite{BostDwork} for related previous works.
	\par 
	The regularity of the line bundle and its strict positivity were crucial in latter developments. 
	Since many constructions in complex geometry (as those arising from envelopes) yield non-regular metrics with weak positivity, it is natural to ask to which extent our results remain valid in these circumstances. 
	As an application of Theorem \ref{thm_char2}, we give in Theorem \ref{thm_ot_weak} a characterization of submanifolds for which a weak analogue of the semiclassical extension theorem holds.
	\par 
	Let us first set up the notations.
	Let $Y$ be a closed submanifold of a compact complex manifold $X$ and $L$ be an ample line bundle over $X$.
	It is classical that there is $k_0 \in \nat$, such that for any $k \geq k_0$, the map	
	\begin{equation}\label{eq_res_y_mor}
	 	\res_Y : H^0(X, L^{\otimes k}) \to H^0(Y, L|_Y^{\otimes k}),
	 \end{equation}
	is surjective. Hence, a norm on $H^0(X, L^{\otimes k})$ induces a norm on $H^0(Y, L|_Y^{\otimes k})$.
	In this language, Ohsawa-Takegoshi extension theorem basically compares the two norms on $H^0(Y, L|_Y^{\otimes k})$: one induced from the metric on $Y$, another one is the quotient norm induced from the metric on $X$.
	Let us now recall a semiclassical version of holomorphic extension theorem for continuous metrics.
 	\begin{thm}\label{thm_ot_semi}
 		For any continuous psh metric $h^L$ on an ample line bundle $L$ over a compact complex manifold $X$, under surjection (\ref{eq_res_y_mor}), the following equivalence of norms on $R(Y, L)$ holds
		$
			[{\rm{Ban}}^{\infty}_{X}(h^L)] \sim {\rm{Ban}}^{\infty}_{Y}(h^L)
		$.
	\end{thm}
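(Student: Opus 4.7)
The inequality ${\rm{Ban}}^{\infty}_{Y}(h^L) \leq [{\rm{Ban}}^{\infty}_{X}(h^L)]$ is immediate from the definitions: for any $s \in H^0(Y, L|_Y^k)$ and any lift $g \in H^0(X, L^k)$ with $\res_Y g = s$, one has the pointwise bound $\sup_Y |s|_{h^L} \leq \sup_X |g|_{h^L}$, and infimizing over $g$ yields the claimed estimate by the definition of the quotient norm in (\ref{eq_defn_quot_norm}). The content of the theorem therefore lies in producing, for every $\epsilon > 0$, an integer $k_0$ such that for all $k \geq k_0$ and every $s \in H^0(Y, L|_Y^k)$ there is an extension $\tilde{s} \in H^0(X, L^k)$ satisfying
\begin{equation*}
\|\tilde{s}\|_{L^{\infty}_{k}(X, h^L)} \leq e^{\epsilon k} \cdot \|s\|_{L^{\infty}_{k}(Y, h^L)}.
\end{equation*}

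The plan is to reduce to the case of a smooth positive Hermitian metric. Since $L$ is ample and $h^L$ is continuous and psh, one can couple Demailly regularization (cf. Remark \ref{rem_regul_above}b)) with the uniform continuity of $h^L$ to produce, for every $\epsilon > 0$, a smooth positive metric $h^L_{\epsilon}$ on $L$ with the uniform two-sided bound $e^{-\epsilon} \cdot h^L \leq h^L_{\epsilon} \leq e^{\epsilon} \cdot h^L$ on $X$. Under such a comparison, for any $f \in H^0(X, L^k)$ the sup-norms with respect to $h^L$ and $h^L_{\epsilon}$ differ by at most a factor $e^{\epsilon k/2}$; the analogous statement holds for restrictions to $Y$ and for the induced quotient norms. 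Hence it suffices to establish the reverse bound for $h^L_{\epsilon}$ with a merely subexponential (in fact polynomial) loss, and then let $\epsilon \to 0$.

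For a smooth positive metric, the required estimate is precisely the asymptotic sup-norm Ohsawa-Takegoshi extension theorem established in \cite[Theorem 1.10]{FinOTAs}, following earlier works of Zhang, Bost and Randriambololona: there exist constants $C, m > 0$ depending on $h^L_{\epsilon}$ but not on $k$ or $s$, such that every $s \in H^0(Y, L|_Y^k)$ extends to $\tilde{s} \in H^0(X, L^k)$ with $\|\tilde{s}\|_{L^{\infty}_{k}(X, h^L_{\epsilon})} \leq C k^m \cdot \|s\|_{L^{\infty}_{k}(Y, h^L_{\epsilon})}$. Combining with the two-sided comparison above gives
\begin{equation*}
[{\rm{Ban}}^{\infty}_{X}(h^L)]_k \leq C k^m e^{2 \epsilon k} \cdot {\rm{Ban}}^{\infty}_{Y}(h^L)_k,
\end{equation*}
and since $\epsilon$ was arbitrary and $C k^m$ is subexponential, the $\sim$-equivalence follows.

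The principal obstacle is the smooth-metric sup-norm Ohsawa-Takegoshi statement itself, which requires upgrading the classical $L^2$-extension to pointwise $L^{\infty}$-control via a peak-section or Bernstein-type argument, together with a careful choice of weights near $Y$ producing only a polynomial-in-$k$ loss; this delicate analysis is carried out in \cite{FinOTAs} and we invoke it as a black box. The remaining work in the present proof is the elementary regularization step that transfers these smooth estimates to the continuous psh setting.
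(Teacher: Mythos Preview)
Your argument is correct, and its structure---the trivial direction plus regularization to a smooth strictly positive metric followed by an appeal to the smooth sup-norm extension theorem---is exactly the standard route. The paper itself does not give a self-contained proof of this statement: it simply records that the result is due to Zhang \cite{ZhangPosLinBun}, Bost \cite[Theorem A.1]{BostDwork} (under strict positivity), and Randriambololona \cite{RandriamCrelle}, and points to \cite[Corollary 2.12]{FinSecRing} for the precise version used here. Your sketch is thus a faithful unpacking of what lies behind those citations; the only cosmetic point is that you invoke \cite{FinOTAs} for the smooth case whereas the paper's reference chain goes through \cite{FinSecRing}, but the underlying mechanism is the same.
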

	\begin{proof}
		This result was proved by Bost \cite[Theorem A.1]{BostDwork} with stronger assumption of strict positivity on the curvature of $(L, h^L)$, refining previous result of Zhang \cite{ZhangPosLinBun}.
		See also Randriambololona \cite{RandriamCrelle} for a statement which requires laxer assumptions on the manifolds $X$ and $Y$.
		The proof of exactly this version of the theorem can be found in \cite[Corollary 2.12]{FinSecRing}.
	\end{proof}
	We will now show that for non-continuous metrics $h^L$, things become far more complicated, and the analogue of Theorem \ref{thm_ot_semi} only holds generically.
	Recall that a subset $E \subset X$ is called \textit{pluripolar} if it is a subset of a \textit{complete pluripolar} set, where the latter is defined as $\{ x \in X : u(x) = - \infty \}$ for a certain $u \in {\rm{PSH}}(X, \omega)$ and a Kähler form $\omega$.
	These definitions do not depend on the choice of the form $\omega$, cf. \cite[Proposition 2.3]{GuedZeriGeomAnal}.
	\par 
	The main result of this section goes as follows.
 	\begin{thm}\label{thm_ot_gen}
 		A bounded psh metric $h^L$ on $L$ is regularizable from above if and only if for any $p \in [1, +\infty[$, under (\ref{eq_res_y_mor}), for generic submanifolds $Y$ of $X$, we have
		\begin{equation}\label{eq_ot_gen}
			[{\rm{Ban}}^{\infty}_{X}(h^L)] \sim_p {\rm{Ban}}^{\infty}_{Y}(h^L).
		\end{equation}
		Generic here means that it holds for $Y$ not contained in a certain pluripolar subset of $X$.
	\end{thm}
	\par 
	In order to prove Theorem \ref{thm_ot_gen}, recall that Bedford-Taylor in \cite[Theorem 7.1]{BedfordTaylor} proved in local setting that for a uniformly bounded sequence $\phi_i$, $i \in \nat$ of psh functions, we have $\sup \phi_i = (\sup \phi_i)^*$ away from a pluripolar subset.
	Since on a Kähler manifold $X$, any locally pluripolar subset is pluripolar by a theorem of Josefson, cf. \cite[Theorem 7.2]{GuedZeriGeomAnal}, the same conclusion holds for functions from ${\rm{PSH}}(X, \omega)$.
	\par 
	Now, from Theorem \ref{thm_bouck_erikss}, we see that to study sup-norms, it is enough to consider regularizable from above psh metrics $h^L$.
	For such $h^L$, we define the \textit{contact subset} $E(h^L) \subset X$ as follows
	\begin{equation}\label{eq_e_sub_defn}
		E(h^L) := \big\{ x \in X : h^L_x \neq Q(h^L)_x \big\},
	\end{equation}	 
	where $Q(h^L)$ is the Fubini-Study envelope of $h^L$, defined in (\ref{eq_defn_fs_env}).
	From the above results of Bedford-Taylor and Josefson, the set $E(h^L)$ is pluripolar.
	The following result gives a criteria for a weaker version of semiclassical Ohsawa-Takegoshi extension theorem to hold.
	It classifies submanifolds for which the asymptotic contribution of holomorphic sections over the submanifold, which cannot be effectively extended to the ambient manifold, is “negligible".
	\begin{thm}\label{thm_ot_weak}
		For any regularizable from above psh metrics $h^L$ on an ample line bundle over a compact complex manifold $X$, the following conditions are equivalent.
		\par 
		1) A submanifold $Y \subset X$ intersects $E(h^L)$ over a pluripolar subset (of $Y$).
		\par 
		2) For any $p \in [1, +\infty[$, under (\ref{eq_res_y_mor}), the equivalence of norms (\ref{eq_ot_gen}) holds.
		\par 
		Moreover, (\ref{eq_ot_gen}) holds for $p = + \infty$ if $Y \cap E(h^L) = \emptyset$.
	\end{thm}
	\begin{sloppypar}
	\begin{proof}
		The norm $N := [{\rm{Ban}}^{\infty}_{X}(h^L)]$ on $R(Y, L)$ is submultiplicative as a quotient of a submultiplicative norm.
		From Lemma \ref{lem_fs_dini}, we have $FS(N) = FS({\rm{Ban}}^{\infty}_{X}(h^L))|_Y$.
		From Theorem \ref{thm_bouck_erikss}, this yields $FS(N)  = Q(h^L)|_Y$.
		Hence, by Theorem \ref{thm_char2} and Corollaries \ref{cor_baninf_dist}, \ref{cor_psh_appr_below}, we conclude that  (\ref{eq_ot_gen}) holds if and only if $(Q(h^L)|_Y)_* = h^L|_Y$.
		Remark, however, that by the already mentioned result \cite[Theorem 7.1]{BedfordTaylor} of Bedford-Taylor, this happens if and only if $Q(h^L)|_Y = h^L|_Y$ away from a pluripolar subset (of $Y$).
		This concludes the proof of the first part of Theorem \ref{thm_ot_weak} by the definition of the subset $E(h^L)$.
		\par 
		Now, points of discontinuity of $h^L$ are contained in $E(h^L)$.
		This is due to the fact that $h^L$ is lower semi-continuous, $Q(h^L)$ is upper semi-continuous and $Q(h^L) \geq h^L$.
		In particular, $h^L|_Y$ is continuous if $Y \cap E(h^L) = \emptyset$.
		By the above, we also have $FS(N)  = h^L|_Y$ under the assumption $Y \cap E(h^L) = \emptyset$.
		The second part of Theorem \ref{thm_ot_weak} now follows from Theorem \ref{thm_char}.
	\end{proof}
	\end{sloppypar}
	\begin{proof}[Proof of Theorem \ref{thm_ot_gen}]
		We first assume that $h^L$ is a regularizable from above psh metric.
		 Let $E^*(h^L)$ be the \textit{pluripolar hull} of $E(h^L)$, i.e. the intersection of all complete pluripolar subsets in $X$ containing $E(h^L)$.
		The subset $E^*(h^L)$ is clearly pluripolar, and when $Y$ is not contained in $E^*(h^L)$, it intersects $E^*(h^L)$ over a pluripolar subset (of $Y$).
		Hence, one direction of Theorem \ref{thm_ot_gen} follows from Theorem \ref{thm_ot_weak}.
		\par 
		Let us establish the opposite direction.
		We fix a bounded psh metric $h^L$ on $L$.
		From the proof of Theorem \ref{thm_ot_weak}, we see that (\ref{eq_ot_gen}) holds for generic points $Y := \{ x \}$, $x \in X$, if and only if the identity $h^L_x = Q(h^L)_x$ holds away from a pluripolar set.
		The latter is clearly equivalent to the fact that $h^L$ is regularizable from above.
	\end{proof}

	\subsection{Projective geometry and norms on symmetric algebras}\label{sect_norm_proj}
	In this section, we reduce the proof of Theorem \ref{thm_induc} to a functional-analytic statement about the norms on the symmetric algebra of complex vector spaces. 
	We also show that the latter statement can be seen as a special case of Theorem \ref{thm_induc}, applied for the projective space.
	\par 
	\begin{sloppypar}
	We fix a finite dimensional complex vector space $V$ with a norm $N_V := \| \cdot \|_V$.
	Recall that for any $k \in \nat^*$, we have the \textit{polarisation} map ${\rm{Pol}} : \sym^k(V) \to V^{\otimes k}$ and the \textit{symmetrization} map $\sym : V^{\otimes k} \to \sym^k(V)$.
	Consider two norms $\sym^k_{\epsilon}(N_V) := \| \cdot \|^{\sym, \epsilon}_{N_V, k}$ and $\sym^k_{\pi}(N_V) := \| \cdot \|^{\sym, \pi}_{N_V, k}$ on symmetric tensors $\sym^k(V)$, induced by the polarisation map, and the norms $N_V \otimes_{\epsilon} \cdots \otimes_{\epsilon} N_V$, $N_V \otimes_{\pi} \cdots \otimes_{\pi} N_V$ on $V^{\otimes k}$.
	Define the norm $\sym^k_{{\rm{ev}}}(N_V) := \| \cdot \|^{{\rm{ev}}}_{N_V, k}$ on $\sym^k(V)$ as
	\begin{equation}\label{eq_sup_norm_polll}
		\| P \|^{{\rm{ev}}}_{N_V, k}
		:=
		\sup_{\substack{v \in V^* \\ \| v \|_V^* \leq 1}} |P(v)|, 
		\qquad
		P \in \sym^k(V).
	\end{equation}
	We construct from these norms the graded norms $\sym_{{\rm{ev}}}(N_V)$, $\sym_{\epsilon}(N_V)$ and $\sym_{\pi}(N_V)$ on the symmetric algebra $\sym(V)$.
	Similarly to (\ref{defn_equiv_rel}), we define the equivalence relation on the set of graded norms over $\sym(V)$.
	The following result will be established in Sections \ref{sect_bohn_hill} and \ref{sect_proj_t_ot}.
	\end{sloppypar}
	\begin{thm}\label{thm_sym_equiv}
		The norms $\sym_{\pi}(N_V)$, $\sym_{\epsilon}(N_V)$ and $\sym_{{\rm{ev}}}(N_V)$ are equivalent.
	\end{thm}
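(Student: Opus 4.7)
The plan is to establish the two-sided comparison by proving the chain
$\sym^k_{\mathrm{ev}}(N_V) \leq \sym^k_{\epsilon}(N_V) \leq \sym^k_{\pi}(N_V) \leq e^{\epsilon k}\sym^k_{\mathrm{ev}}(N_V)$ for every $\epsilon > 0$ and $k$ large enough, from which the three graded norms coincide up to subexponential factors. The first two inequalities are essentially immediate: restricting the supremum in (\ref{eq_defn_inf_norm}) to the diagonal $\phi_1 = \cdots = \phi_k$ and comparing with (\ref{eq_sup_norm_polll}) yields $\sym^k_{\mathrm{ev}}(N_V) \leq \sym^k_{\epsilon}(N_V)$, while $\sym^k_{\epsilon}(N_V) \leq \sym^k_{\pi}(N_V)$ is the first inequality of Lemma \ref{lem_inj_proj_bnd_nntr}.

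The heart of the theorem is the reverse bound $\sym^k_{\pi}(N_V) \leq e^{\epsilon k} \sym^k_{\mathrm{ev}}(N_V)$. The strategy is to recast the functional-analytic statement in a geometric form by identifying $\sym^k V = H^0(\mathbb{P}(V^*), \mathscr{O}(k))$, so that, by the discussion of Section \ref{sec_fs_bdem}, $\sym^k_{\mathrm{ev}}(N_V)$ becomes the $L^\infty$-norm of sections with respect to the Fubini-Study metric $FS(N_V)$ on $\mathscr{O}(1)$ induced by $N_V$. The projective tensor norm, by contrast, measures the cost of expressing a symmetric tensor as a linear combination of elementary products $v_1 \otimes \cdots \otimes v_k \in V^{\otimes k}$. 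The problem therefore becomes: produce an economical such decomposition for every $P \in \sym^k V$ of small sup-norm on $\mathbb{P}(V^*)$.

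I would combine two ingredients of rather different nature, corresponding to Sections \ref{sect_bohn_hill} and \ref{sect_proj_t_ot}. The first is a Bohnenblust-Hille type inequality, controlling an $\ell^1$-type norm of the coefficient sequence of a homogeneous polynomial on $V^*$ by its sup-norm on the unit ball of $N_V^*$, with a constant growing only like $K^{\sqrt{k \log k}}$ and therefore subexponentially in $k$. The second is the Ohsawa-Takegoshi extension theorem applied to the diagonal embedding $\mathbb{P}(V^*) \hookrightarrow \mathbb{P}(V^*)^k$: every section of $\mathscr{O}(k)$ on the diagonal lifts to a section of $\mathscr{O}(1,\ldots,1)$ on the product with controlled $L^\infty$-norm, equivalently with controlled injective tensor norm on $V^{\otimes k}$. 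A symmetrization argument, exploiting that $\frac{1}{k!}\sum_\sigma \sigma$ is a convex combination of permutations which act as isometries for any tensor norm, then transfers this extension to a decomposition whose symmetric part has controlled projective tensor norm.

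The principal obstacle is the genuine gap between projective and injective tensor norms on $V^{\otimes k}$ for general tensors: iterating Lemma \ref{lem_inj_proj_bnd_nntr} gives a multiplicative gap of order $(\dim V)^{k-1}$, which is exponential in $k$ and cannot yield the required subexponential equivalence. The symmetric structure of $\sym^k V$ must therefore be exploited non-trivially, and this is exactly where the two tools are irreplaceable: the Bohnenblust-Hille inequality is precisely tailored to produce subexponential polarisation-type constants on polynomials (and is known to fail on generic multilinear forms, in accordance with Pisier's counterexample), while the Ohsawa-Takegoshi theorem furnishes the complex-analytic flexibility needed to translate the abstract coefficient control into concrete tensor decompositions in $V^{\otimes k}$.
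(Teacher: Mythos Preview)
You have correctly identified the easy chain $\sym_{\mathrm{ev}}\leq\sym_\epsilon\leq\sym_\pi$ and the two crucial ingredients, Bohnenblust--Hille and Ohsawa--Takegoshi, but the way you propose to combine them does not close the gap, and differs in an essential way from the paper's argument.

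Your proposed Ohsawa--Takegoshi step applies the extension theorem to the diagonal $\mathbb{P}(V^*)\hookrightarrow\mathbb{P}(V^*)^k$. This is not the setting of Theorem~\ref{thm_ot_semi}: there the pair $Y\subset X$ is \emph{fixed} and one sends the power of the line bundle to infinity, obtaining subexponential constants. In your setup the ambient manifold $\mathbb{P}(V^*)^k$ and the codimension both vary with $k$, while the line bundle $\mathscr{O}(1,\ldots,1)$ stays in degree one; no subexponential extension constant is available in that regime. More seriously, even granting such an extension, a lift $T\in V^{\otimes k}$ with small $L^\infty$-norm on $\mathbb{P}(V^*)^k$ has small \emph{injective} tensor norm, and symmetrising $T$ does not produce any control on the \emph{projective} tensor norm of $\mathrm{Sym}(T)=P$: permutations are isometries for both norms, so the symmetrisation operator contracts each norm separately but does not compare them. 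The argument as written is therefore circular --- it assumes the very $\epsilon$-to-$\pi$ comparison on symmetric tensors that the theorem asserts.

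The paper's route avoids this by first treating the single special case $(V,N_V)=(\mathbb{C}^r,\ell_1)$, where both norms are explicit: $\sym_\pi(\ell_1)$ is the $\ell^1$-norm of the monomial coefficients (since $\ell_1\otimes_\pi\ell_1=\ell_1$), while $\sym_{\mathrm{ev}}(\ell_1)$ is the sup-norm on the polydisc. Here Bohnenblust--Hille (in the refined form of Bayart--Pellegrino--Seoane-Sep\'ulveda) gives the subexponential comparison directly. The general case is then reduced to $\ell_1$ by approximating $(V,N_V)$ as a quotient of some $(\mathbb{C}^l,\ell_1)$ (Lemma~\ref{lem_proj_fun_an}); the Ohsawa--Takegoshi theorem is applied to the \emph{fixed} embedding $\mathbb{P}(V^*)\hookrightarrow\mathbb{P}((\mathbb{C}^l)^*)$, which is precisely the semiclassical situation of Theorem~\ref{thm_ot_semi}. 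Thus Bohnenblust--Hille handles the $\epsilon$--$\pi$ gap only in the one setting where it is explicitly computable, and Ohsawa--Takegoshi transports that result to arbitrary $N_V$ via quotients --- the two tools act in series, not in parallel as your sketch suggests.
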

	\begin{rem}\label{rem_sym_equiv}
		a) Restriction to symmetric tensors is absolutely necessary for this statement. In fact, as it follows from the work of Pisier \cite[Théorème 3.1]{PisierDecart}, see also more recent result of Aubrun-M{\"u}ller-Hermes \cite[Theorem 1.1]{AuburnMull}, in the full tensor algebra $T(V) := \sum_{k = 1} V^{\otimes i}$, the gap between injective and projective tensor norms on the graded pieces is exponential for any normed vector space $(V, N_V)$ of dimension bigger than $1$. 
		\par 
		b) Surprisingly, the corresponding statement for real vector spaces is false.
		In fact, if we consider a polynomial $P(x, y) = xy(x^2 - y^2)$ and view it as a polynomial on $(\mathbb{R}^2, \textit{l}_1)$, then an easy calculation shows that for any $k \in \nat^*$, we have $\| P^k \|^{{\rm{ev}}}_{\textit{l}_1, 4k} = \sup_{-1 \leq x, y \leq 1} |P^k(x, y)| = \big( \frac{2 \sqrt{3}}{9} \big)^k$, cf. \cite[proof of Theorem 4.2]{RealBohnHil}. 
		But from the proof of Theorem \ref{thm_sym_equiv}, we know that $\sym_{\pi}(\textit{l}_1)$ corresponds to the sum of the absolute values of the coefficients.
		Hence, we have $\| P^k \|^{\sym, \pi}_{\textit{l}_1, 4k} = 2^k$.
	\end{rem}
	\par 
	We now explain that Theorem \ref{thm_sym_equiv} is in fact a special case of Theorem \ref{thm_induc}.
	For this, we give geometric interpretations for some of the above norms. 
	First of all, directly from (\ref{eq_defn_inf_norm}), we have
	\begin{equation}\label{eq_pol_epsnm}
		\| P \|^{\sym, \epsilon}_{k}
		:=
		\sup_{\substack{v_1, \cdots, v_k \in V^* \\ \| v_i \|_V^* \leq 1}} \Big| {\rm{Pol}}(P) \big(v_1, \cdots, v_k \big) \Big|, 
		\qquad
		P \in \sym^k(V).
	\end{equation}
	Hence, from Lemma \ref{lem_inj_proj_bnd_nntr}, (\ref{eq_sup_norm_polll}) and (\ref{eq_pol_epsnm}), the following chain of inequalities holds
	\begin{equation}\label{eq_compar_3_nmrs}
		\sym_{{\rm{ev}}}(N_V) \leq \sym_{\epsilon}(N_V) \leq \sym_{\pi}(N_V).
	\end{equation}
	In particular, we see that for the proof of Theorem \ref{thm_sym_equiv}, it is enough to establish the equivalence of the norms $\sym_{{\rm{ev}}}(N_V)$ and $\sym_{\pi}(N_V)$.
	\begin{sloppypar}
	\begin{rem}	
		By (\ref{eq_pol_epsnm}), the equivalence of $\sym_{{\rm{ev}}}(N_V)$ and $\sym_{\epsilon}(N_V)$ from Theorem \ref{thm_sym_equiv} means exactly that the \textit{polarisation constant}, cf. \cite[(4)]{PolarConst} for a definition, for finite dimensional complex normed vector spaces is equal to $1$.
		This fact was recently established by Dimant-Galicer-Rodr{\'{\i}}guez \cite[Theorem 1.1]{PolarConst} through different methods.
	\end{rem}
	\end{sloppypar}
	\par 
	Let us now give an interpretation of the norm $\sym_{{\rm{ev}}}(N_V)$ through projective spaces.
	We view the symmetric algebra $\sym(V)$ as the section ring $R(\mathbb{P}(V^*), \mathscr{O}(1))$ through the identification
	\begin{equation}\label{eq_sym_isom}
		\sym^k(V) = H^0(\mathbb{P}(V^*), \mathscr{O}(k)).
	\end{equation}
	Under this isomorphisms, we have the following identification of norms
	\begin{equation}\label{eq_sum_ev_norm}
		\sym_{{\rm{ev}}}(N_V) = {\rm{Ban}}^{\infty}_{\mathbb{P}(V^*)}(FS(N_V)).
	\end{equation}
	\par 
	We now consider the norms $\sym^k_{\epsilon, 0}(N_V)$ and $\sym^k_{\pi, 0}(N_V)$ on $\sym^k(V)$, given by the quotients of $N_V \otimes_{\epsilon} \cdots \otimes_{\epsilon} N_V$, $N_V \otimes_{\pi} \cdots \otimes_{\pi} N_V$ on $V^{\otimes k}$ through the symmetrization map, ${\rm{Sym}}$. 
	\begin{lem}\label{lem_sym_quot_inj}
		The norms $\sym^k_{\epsilon, 0}(N_V)$ (resp. $\sym^k_{\pi, 0}(N_V)$) and $\sym^k_{\epsilon}(N_V)$ (resp. $\sym^k_{\pi}(N_V)$) over $\sym^k(V)$ coincide.
	\end{lem}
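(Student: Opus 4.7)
The plan is to reduce both equalities to the single observation that the linear map $\mathrm{Pol}\circ\sym : V^{\otimes k} \to V^{\otimes k}$, which is the averaging projection onto the subspace of $S_k$-invariant tensors, acts as a contraction with respect to both the injective and projective tensor norms on $V^{\otimes k}$. Once this is in hand, the lemma is a short duality argument between subspaces and quotients.

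First I would record the structural identity $\sym\circ \mathrm{Pol}=\mathrm{id}_{\sym^k(V)}$ (so that $\mathrm{Pol}$ embeds $\sym^k(V)$ as the symmetric subspace of $V^{\otimes k}$, and $\sym$ projects onto it via $\mathrm{Pol}\circ\sym(T)=\frac{1}{k!}\sum_{\sigma\in S_k}\sigma\cdot T$). I would then observe that each permutation $\sigma\in S_k$ acts by isometries on $V^{\otimes k}$ in both $\|\cdot\|_{\otimes_\pi}$ and $\|\cdot\|_{\otimes_\epsilon}$: for the projective norm this is immediate from the definition (\ref{eq_defn_proj_norm}), since permuting factors in a decomposition $T=\sum v_1^i\otimes\cdots\otimes v_k^i$ gives a decomposition of $\sigma T$ with the same sum $\sum\prod\|v_j^i\|_V$; for the injective norm it follows from (\ref{eq_defn_inf_norm}) since permuting the test functionals $\phi_j\in V^*$ is a bijection on the set of $k$-tuples of unit-norm functionals. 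Averaging over $S_k$ and applying the triangle inequality yields $\|\mathrm{Pol}\circ\sym(T)\|\leq\|T\|$ for both tensor norms.

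With this contraction property, the two inequalities follow formally. For any $P\in\sym^k(V)$ the relation $\sym(\mathrm{Pol}(P))=P$ exhibits $\mathrm{Pol}(P)$ as a preimage of $P$ under $\sym$, hence $\|P\|^{\sym,*}_{V,k,0}\leq\|\mathrm{Pol}(P)\|=\|P\|^{\sym,*}_{V,k}$ for $*\in\{\pi,\epsilon\}$. Conversely, for any $T\in V^{\otimes k}$ with $\sym(T)=P$, the contraction property gives
\begin{equation*}
\|P\|^{\sym,*}_{V,k}=\|\mathrm{Pol}(P)\|=\|\mathrm{Pol}\circ\sym(T)\|\leq\|T\|,
\end{equation*}
and taking the infimum over such $T$ yields the reverse inequality $\|P\|^{\sym,*}_{V,k}\leq\|P\|^{\sym,*}_{V,k,0}$.

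There is no real obstacle here; the only conceptual point is the isometric action of $S_k$, and even that is a direct consequence of the symmetric role played by the tensor factors in the two universal definitions. The proof is essentially an observation, but it is the observation that lets us henceforth pass freely between the subspace and quotient viewpoints on $\sym^k(V)$, which will be convenient in the sequel (notably when combining this with Lemma~\ref{lem_inj_proj_bnd_nntr} in the proof of Theorem~\ref{thm_sym_equiv}).
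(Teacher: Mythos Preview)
Your proof is correct and takes essentially the same approach as the paper: the paper's one-line proof (``It follows directly from the fact that permutations of coordinates are isometries for both norms'') is exactly your key observation, and you have simply supplied the standard details of how the isometric $S_k$-action forces the subspace norm via $\mathrm{Pol}$ and the quotient norm via $\sym$ to agree.
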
 
	\begin{proof}
		It follows directly from the fact that permutations of coordinates are isometries for both norms $N_V \otimes_{\epsilon} \cdots \otimes_{\epsilon} N_V$, $N_V \otimes_{\pi} \cdots \otimes_{\pi} N_V$. 
	\end{proof}
	Remark that symmetrization and multiplication maps (\ref{eq_mult_map}) can be put under the isomorphisms (\ref{eq_sym_isom}) into the following commutative diagram 
	\begin{equation}\label{eq_comm_diag222}
	\begin{CD}
		H^0(\mathbb{P}(V^*), \mathscr{O}(1))^{\otimes k}  @> {\rm{Mult}}_{1, \cdots, 1} >> H^0(\mathbb{P}(V^*), \mathscr{O}(k)) 
		\\
		@| {}  @| {} 
		\\
		V^{\otimes k}  @> \sym >> \sym^k(V).
	\end{CD}
	\end{equation}
	\par 
	Lemma \ref{lem_sym_quot_inj}, (\ref{eq_sum_ev_norm}) and (\ref{eq_comm_diag222}) imply that Theorem \ref{thm_sym_equiv} is a specialisation of Theorem \ref{thm_induc} to $X = \mathbb{P}(V^*)$, $L = \mathscr{O}(1)$ and $N_1 := N_V$.
	Remark, however, that our proof proceeds in another direction: we first establish Theorem \ref{thm_sym_equiv} and then prove Theorem \ref{thm_induc}.
	\begin{proof}[Proof of Theorem \ref{thm_induc} assuming Theorem \ref{thm_sym_equiv}.]
		Let us first prove the following inequalities
		\begin{equation}\label{eq_norms_order_ep_ev}
			{\rm{Ban}}^{\infty}(FS(N_1)) \leq N^{\epsilon} \leq N^{\pi}.
		\end{equation}
		The first inequality is a direct consequence of (\ref{eq_norm_with_fs_compar}) and \cite[Lemma 4.3]{FinSecRing}.
		The second inequality follows directly from Lemma \ref{lem_inj_proj_bnd_nntr}.
		\begin{sloppypar}
		From (\ref{eq_norms_order_ep_ev}), it is enough to establish that the norm $N^{\pi}$ can be bounded from above by ${\rm{Ban}}^{\infty}(FS(N_1))$, considered up to a subexponential factor.
		The proof of this result is essentially a word-to-word repetition of the proof of the second part of \cite[Theorem 4.18]{FinSecRing}.
		We only need to replace the use of the first part of the proof of \cite[Theorem 4.18]{FinSecRing} by Theorem \ref{thm_sym_equiv}.
		For the convenience of the reader, we reproduce the argument below.
		\end{sloppypar}
		\begin{sloppypar}
		Let us consider the Kodaira embedding ${\rm{Kod}}_1$ from (\ref{eq_kod}).
		We denote by $\res_{{\rm{Kod}}} : R(\mathbb{P}(H^0(X, L)^*), \mathscr{O}(1)) \to R(X, L)$ the associated restriction operator, and by $\res_{{\rm{Kod}}, k}$, $k \in \nat^*$, the restriction operators on the associated graded pieces.
		The multiplication operator ${\rm{Mult}}_{1, \cdots, 1}$ from (\ref{eq_mult_map}) factorizes under the identification (\ref{eq_sym_isom}) through symmetrization and restriction as
		\begin{equation}\label{eq_kod_map_comm_d}
		\begin{tikzcd}
			H^0(X, L)^{\otimes k} \arrow[swap, rrdd, "{\rm{Mult}}_{1, \cdots, 1}"] \arrow[r, "\sym"] & {\rm{Sym}}^k(H^0(X, L)) \arrow[rd, equal] &  \\
			&  & H^0(\mathbb{P}(H^0(X, L)^*), \mathscr{O}(k)) \arrow[d, "\res_{{\rm{Kod}}, k}"]  \\
 	 		&  & H^0(X, L^{\otimes k}).
    	\end{tikzcd}
		\end{equation}
		\end{sloppypar}
		\par 
		Now, from (\ref{eq_kod_map_comm_d}), it is sufficient to show that by a subsequent quotient of the projective tensor norm induced by $N_1$ through the symmetrization map $\sym$ and the map $\res_{{\rm{Kod}}}$, we get the norm ${\rm{Ban}}^{\infty}_{X}(FS(N_1))$ on $H^0(X, L^{\otimes k})$.
		From Theorem \ref{thm_sym_equiv}, this quotient norm on ${\rm{Sym}}(H^0(X, L))$ is equivalent to $\sym_{{\rm{ev}}}(N_V)$, which by (\ref{eq_sum_ev_norm}) coincides with ${\rm{Ban}}^{\infty}_{\mathbb{P}(H^0(X, L)^*)}(FS(N_1))$ under the identification (\ref{eq_sym_isom}).
		But by Theorem \ref{thm_ot_semi}, the quotient of the norm ${\rm{Ban}}^{\infty}_{\mathbb{P}(H^0(X, L)^*)}(FS(N_1))$ under the map $\res_{{\rm{Kod}}}$ is equivalent to ${\rm{Ban}}^{\infty}_{X}(FS(N_1))$. 
		This finishes the proof.
	\end{proof}

	\subsection{Homogenization of submultiplicative norms on section rings}\label{sect_homog}
	The main goal of this section is to give an explicit formula for the spectral radius seminorm, defined in (\ref{eq_homog_defn}), associated with a submultiplicative graded norm on a section ring.
	We also discuss more precisely the connection between the current work and the previous works on the non-Archimedean analogues of Theorems \ref{thm_char}, \ref{thm_char2}.
	\par 
	Let us first recall why the spectral radius seminorm is a seminorm. 
	We assume that $A$ has a unit (otherwise one can formally add it).
	Gelfand’s spectral radius formula says, cf. \cite[Theorems I.4.4, I.4.6]{GelfCommNormRing}, that $\| \cdot \|_A^{{\rm{hom}}}$ can be alternatively described as follows
	\begin{equation}\label{eq_spec_hom}
		\| f \|_A^{{\rm{hom}}} 
		=
		\max \big\{ 
			|\lambda| : \lambda \in {\rm{spec}}(f)
		\big\},
	\end{equation}
	where ${\rm{spec}}(f)$ is the spectrum of $f$, given by $\lambda \in \comp$ such that $\lambda - f$ is not invertible in the completion of  $(A, \| \cdot \|_A)$.
	The formula (\ref{eq_spec_hom}) implies that $\| \cdot \|_A^{{\rm{hom}}}$ is a seminorm.
	\begin{thm}\label{thm_hom}
		Assume that a graded norm $N = \sum N_k$ over the section ring $R(X, L)$ of an ample line bundle $L$ is bounded and submultiplicative. Then $
			N^{{\rm{hom}}} = {\rm{Ban}}^{\infty}(FS(N))$.
	\end{thm}
	\begin{rem}
		In non-Archimedean setting, where a submultiplicative norm is replaced by a submultiplicative filtration, a result analogous to Theorem \ref{thm_hom}, was established by Rees \cite[Corollary on p.168]{ReesValI}, cf. also \cite[\S 4.1]{ReesBook}, for Noetherian filtrations, and by Boucksom-Jonsson \cite[Theorem 2.16]{BouckJohn21} for bounded submultiplicative filtrations.
	\end{rem}
	\begin{proof}
		From (\ref{eq_fek_inf}), (\ref{eq_norm_with_fs_compar}) and (\ref{eq_nkl_bnd_fsa22}), we conclude that for any $l \in \nat^*$, we have
		\begin{equation}
			{\rm{Ban}}^{\infty}(FS(N)) \leq N^{{\rm{hom}}} \leq {\rm{Ban}}^{\infty}(FS(N_l)^{\frac{1}{l}})
		\end{equation}
		It is, hence, enough to establish that $
			{\rm{Ban}}^{\infty}(FS(N)) = \inf_{l \in \nat} {\rm{Ban}}^{\infty}(FS(N_l)^{\frac{1}{l}})
		$.
		\par 
		Clearly, by considering a subsequence $l = 2^k$, $k \in \nat$, from (\ref{eq_fek_inf}), it is enough to prove that for a decreasing sequence of upper semi-continuous functions $\phi_i$ on a compact manifold $X$, the following identity holds
		$
			\lim_{i \to \infty} \sup_{x \in X} \phi_i(x)
			=
			\sup_{x \in X} \lim_{i \to \infty} \phi_i(x)
		$.
		It is trivial that $\lim_{i \to \infty} \sup_{x \in X} \phi_i(x) \geq \sup_{x \in X} \lim_{i \to \infty} \phi_i(x)$.
		To argue in another direction, we mimic the proof of Dini's theorem.
		Let us denote $M = \sup_{x \in X} \lim_{i \to \infty} \phi_i(x)$. 
		Then for any $\epsilon > 0$, the sets $U_i = \{ x \in X : \phi_i(x) < M + \epsilon \}$ provide a cover of $X$.
		By our assumption on upper semi-continuity of $\phi_i$, this cover is open.
		By compactness of $X$, there is a finite subcover.
		But since $\phi_i$ decrease, the sets $U_i$ are nested, and hence there is $i_0 \in \nat$, such that $U_{i_0} = X$. 
		Hence, $\sup_{x \in X} \phi_{i_0}(x) < M + \epsilon$.
		Since $\epsilon > 0$ was chosen arbitrary and $\phi_i$ decrease, we deduce the inverse direction  $\lim_{i \to \infty} \sup_{x \in X} \phi_i(x) \leq M$, which finishes the proof.
	\end{proof}
	\begin{rem}
		We learned from Sébastien Boucksom that one can alternatively prove Theorem \ref{thm_hom} relying only on the Gelfand’s spectral radius formula and the interpretation of the spectrum of a section ring as an affine cone, very much in spirit of Fang \cite[Proposition 3.15]{FangNonArch}.
	\end{rem}
	In particular, directly from Theorems \ref{thm_char}, \ref{thm_char2} and \ref{thm_hom}, we deduce the following result.
	\begin{cor}\label{cor_str_val}
		For any bounded submultiplicative graded norm $N$ over the section ring $R(X, L)$ of an ample line bundle $L$ and any $p \in [1, +\infty[$, we have
		$
			N \sim_p N^{{\rm{hom}}}
		$.
		If, moreover, $FS(N)$ is continuous, then one can even take $p = +\infty$ above.
	\end{cor}
	\begin{rem}
		In non-Archimedean setting, a result analogous to Corollary \ref{cor_str_val}, was established by Rees \cite[Theorem 3.4]{ReesValII}, cf. also \cite[\S 5.3]{ReesBook}, \cite[Theorem 2.3]{BouckJohn21}, for Noetherian filtrations on rings such that their localizations over maximal ideals are analytically unramified (in particular, the result applies to finitely generated submultiplicative filtrations on section rings, see \cite[\S 9]{SwanHuneke}).
		\par 
		Unlike in this article, in \cite{ReesValII}, \cite{BouckJohn21}, the analogues of Theorems \ref{thm_char},  \ref{thm_char2} follow from the analogues of Theorem \ref{thm_hom} and Corollary \ref{cor_str_val}.
		The techniques of the proofs of the analogue of Corollary \ref{cor_str_val} from \cite{ReesValII}, \cite{BouckJohn21} rely on the non-Archimedean analysis, which do not seem to adapt in our setting.
		It is interesting if one can prove Corollary \ref{cor_str_val} without having established Theorems \ref{thm_char}, \ref{thm_char2} beforehand, providing an alternative approach to the main results of this article.
	\end{rem}

	\section{Norms on spaces of polynomials}\label{sec_norm_poly_big}
	The main goal of this self-contained section is to establish Theorem \ref{thm_sym_equiv}.
	In Section \ref{sect_bohn_hill}, we establish Theorem \ref{thm_sym_equiv} in the special case  $V = \comp^r$, $r \in \nat^*$, and $N_V := \| \cdot \|_V := \textit{l}_1$, and in Section \ref{sect_proj_t_ot}, we prove Theorem \ref{thm_sym_equiv} in its full generality by relying on some tools from complex geometry.
	
	\subsection{Bohnenblust-Hille inequality as ratio of injective and projective norms}\label{sect_bohn_hill}
	The main goal of this section is to establish Theorem \ref{thm_sym_equiv} in the special case  $V = \comp^r$, $r \in \nat^*$, and $N_V := \| \cdot \|_V := \textit{l}_1$.
	To establish this, we rely on a recent result about the optimal estimate in Bohnenblust-Hille inequality, which we now recall.
	Consider a vector space $V_{r, k}$ of homogeneous complex polynomials of degree $k$ in $r$ variables.
	We represent an element $P \in V_{r, k}$ as
	\begin{equation}\label{eq_poly}
		P(x_1, \cdots, x_r)
		=
		\sum_{|\alpha| = k} a_{\alpha} x^{\alpha}.
	\end{equation}
	Since $\dim V_{r, k} = \binom{r + k}{r} < +\infty$, any two norms on $V_{r, k}$ are equivalent.
	In particular, for any $\beta \geq 1$, there is a constant $B_{r, k}^{\beta} > 0$, such that for any $P \in V_{r, k}$ as in (\ref{eq_poly}), we have
	\begin{equation}\label{eq_boh_hil}
		\Big( 
			\sum_{|\alpha| = k} |a_{\alpha}|^{\beta}
		\Big)^{\frac{1}{\beta}}
		\leq
		B_{r, k}^{\beta}
		\cdot
		\| P \|,
	\end{equation}
	where the sup-norm $\| P \|$ is defined as follows 
	\begin{equation}
		\| P \| := \sup_{\substack{x_i \in \comp \\ |x_i| \leq 1}} \big| P(x_1, \cdots, x_r) \big|.
	\end{equation}
	We assume that the constants $B_{r, k}^{\beta}$ for $r, k \in \nat^*$, $\beta \geq 1$, are the minimal constants verifying the inequality (\ref{eq_boh_hil}).
	The main result of this section goes as follows.
	\begin{prop}\label{prop_subexp}
		For any fixed $r \in \nat^*$, the sequence $B_{r, k}^{1}$, $k \in \nat$, grows subexponentially in $k$.
	\end{prop}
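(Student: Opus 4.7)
My plan is to prove the far stronger statement that $B_{r,k}^1 \leq \sqrt{\dim V_{r,k}}$, which is polynomial in $k$ for $r$ fixed, hence certainly subexponential. The argument proceeds through two standard steps.

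First, I would apply Parseval's identity on the distinguished boundary of the polydisk. Let $\mathbb{T}^r := \{z \in \comp^r : |z_i| = 1, \, i = 1, \ldots, r\}$, equipped with the normalized Haar measure $d\mu$. Since the monomials $z \mapsto z^{\alpha}$ are mutually orthogonal in $L^2(\mathbb{T}^r, d\mu)$ and of norm $1$, for any $P \in V_{r,k}$ written as in (\ref{eq_poly}), we have
\begin{equation*}
	\sum_{|\alpha| = k} |a_\alpha|^2
	=
	\int_{\mathbb{T}^r} |P(z)|^2 \, d\mu(z)
	\leq
	\sup_{z \in \mathbb{T}^r} |P(z)|^2
	\leq
	\| P \|^2,
\end{equation*}
the final inequality following since $\mathbb{T}^r \subset \{|x_i| \leq 1\}$.

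Second, I would apply Cauchy-Schwarz to pass from the $\ell^2$-norm to the $\ell^1$-norm of coefficients. Denoting $M_{r,k} := \#\{\alpha \in \nat^r : |\alpha| = k\} = \dim V_{r,k}$, this yields
\begin{equation*}
	\sum_{|\alpha| = k} |a_\alpha|
	\leq
	\sqrt{M_{r,k}} \cdot \Big( \sum_{|\alpha| = k} |a_\alpha|^2 \Big)^{\frac{1}{2}}
	\leq
	\sqrt{M_{r,k}} \cdot \| P \|.
\end{equation*}
Hence $B_{r,k}^1 \leq \sqrt{M_{r,k}}$. For $r$ fixed, $M_{r,k}$ is a polynomial in $k$ of degree $r - 1$, so $\sqrt{M_{r,k}}$ is polynomial and in particular satisfies $\sqrt{M_{r,k}} \leq \exp(\epsilon k)$ eventually, for any $\epsilon > 0$.

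There is essentially no obstacle here: the sharp Bohnenblust-Hille phenomenon (optimal exponent $\beta = \frac{2k}{k+1}$, hypercontractive constants, independence from $r$) is delicate, but the statement required in Proposition \ref{prop_subexp} is tame precisely because $r$ is held fixed and $\dim V_{r,k}$ grows only polynomially in $k$. All the subtlety is absorbed in this polynomial factor, and the naive $\ell^1 \leq \sqrt{\dim} \cdot \ell^2$ bound suffices.
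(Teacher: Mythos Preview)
Your proof is correct and is genuinely more elementary than the paper's. The paper invokes the deep Bohnenblust--Hille result of Bayart--Pellegrino--Seoane-Sep\'ulveda (Theorem~\ref{thm_boh_hil_subx}), which says that the constants $B_k := \sup_{r} B_{r,k}^{2k/(k+1)}$ are subexponential in $k$ uniformly in $r$, and then passes from the $\ell^{2k/(k+1)}$-norm to the $\ell^1$-norm via the generalized mean inequality, picking up a factor $\binom{r+k}{r}^{(k-1)/(2k)}$. You instead use the trivial case $\beta = 2$: Parseval on the torus already gives $B_{r,k}^2 \leq 1$, and Cauchy--Schwarz converts this to $B_{r,k}^1 \leq \sqrt{\dim V_{r,k}}$. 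Since $r$ is fixed, the dimension is polynomial in $k$, and you are done without any appeal to Bohnenblust--Hille. Your route gives the cleaner, purely polynomial bound $B_{r,k}^1 \leq \sqrt{\dim V_{r,k}}$, whereas the paper's bound $B_k \cdot (\dim V_{r,k})^{(k-1)/(2k)}$ carries the a priori only subexponential factor $B_k$. The paper's detour through Bohnenblust--Hille makes contact with the uniform-in-$r$ theory, but for the statement actually needed here (fixed $r$) this is overkill, and your argument is both simpler and self-contained.
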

	Recall that Bohnenblust–Hille in \cite{BohnHill} showed that for $\beta := \frac{2k}{k + 1}$, the constant 
	\begin{equation}
		B_k := \sup_{r \in \nat} B_{r, k}^{\beta}
	\end{equation}	
	is finite.
	In other words, for this choice of $\beta$, the bound like (\ref{eq_boh_hil}) can be made uniform in the number of variables.
	We need the following recent result about the asymptotics of $B_k$.
	\begin{thm}[{Bayart-Pellegrino-Seoane-Sep{\'u}lveda \cite[Corollary 5.3]{BohrRad} }]\label{thm_boh_hil_subx}
		The constants $B_k$ grow subexponentially in $k$. 
	\end{thm}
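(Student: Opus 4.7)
The plan is to reduce the polynomial Bohnenblust-Hille inequality to the multilinear Bohnenblust-Hille inequality and then defeat the exponential loss inherent in polarization via a refined comparison between symmetric multilinear forms and polynomials on the polydisc. Given an $m$-homogeneous polynomial $P(x) = \sum_{|\alpha|=m} a_\alpha x^\alpha$ in $n$ variables, I would associate to it the unique symmetric $m$-linear form $T_P$ satisfying $P(x)=T_P(x,\ldots,x)$, with coefficients $c_{i_1,\ldots,i_m}$ related to $a_\alpha$ by $a_\alpha=\binom{m}{\alpha}c_{i(\alpha)}$, where $i(\alpha)$ is any ordered tuple with multiplicity profile $\alpha$.

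The classical multilinear Bohnenblust-Hille inequality, together with the bounds of Defant-Popa-Schwarting and subsequent refinements, yields
\begin{equation*}
\Big(\sum_{(i_1,\ldots,i_m)\in[n]^m}|c_{i_1,\ldots,i_m}|^{\frac{2m}{m+1}}\Big)^{\frac{m+1}{2m}}\leq K_m\|T_P\|,
\end{equation*}
with multilinear constants $K_m$ growing at most polynomially in $m$ and, crucially, independently of $n$. Rearranging the multilinear sum over ordered tuples as a sum over multi-indices weighted by the multinomials $\binom{m}{\alpha}$ transfers this to an $\ell^{2m/(m+1)}$ bound on the polynomial coefficients $a_\alpha$, up to combinatorial factors which a straightforward entropy estimate shows grow at worst subexponentially in $m$.

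The main obstacle is the passage from $\|T_P\|$ to $\|P\|$: the standard polarization formula gives only $\|T_P\|\leq (m^m/m!)\|P\|$, whose right-hand side is exponential in $m$ by Stirling, and would destroy any hope of a subexponential bound on $B_m$. To overcome this, the heart of the proof relies on a subexponential comparison $\|T_P\|\leq A^m\|P\|$ with $A^m=e^{o(m)}$ valid on the complex polydisc, uniformly in the number of variables. This is to be achieved by a probabilistic argument of Kahane-Salem-Zygmund type comparing uniform norms on the polydisc to expected values against Steinhaus random variables, combined with hypercontractive estimates of Bonami type for polynomials in complex random signs, whose constants enjoy a subexponential dependence in the degree.

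Combining these three ingredients---the multilinear Bohnenblust-Hille bound, the combinatorial rearrangement, and the subexponential polarization estimate---produces an upper bound on $B_k$ that grows subexponentially in $k$. The hard part is the last step, historically requiring several successive refinements and culminating in the sharpest available estimates used by Bayart-Pellegrino-Seoane-Sep\'ulveda to derive the precise rate.
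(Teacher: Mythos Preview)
The paper does not prove this theorem: it is quoted as a black box from Bayart--Pellegrino--Seoane-Sep\'ulveda and then immediately used to deduce Proposition~\ref{prop_subexp}. So there is no proof in the paper to compare against; you are sketching an argument for a cited external result.

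That said, your outline has a genuine gap. You propose to pass from the multilinear Bohnenblust--Hille inequality to the polynomial one via the relation $a_\alpha = \binom{m}{\alpha} c_{i(\alpha)}$, and you claim the resulting combinatorial factors are subexponential by a ``straightforward entropy estimate''. But the rearrangement actually gives
\[
\Big(\sum_\alpha \binom{m}{\alpha}^{-\frac{m-1}{m+1}}|a_\alpha|^{\frac{2m}{m+1}}\Big)^{\frac{m+1}{2m}}\le K_m\|T_P\|,
\]
and stripping the multinomial weight to recover a clean $\ell^{2m/(m+1)}$ bound on the $a_\alpha$ costs a factor of order $(\max_\alpha \binom{m}{\alpha})^{\frac{m-1}{2m}}$, which can be as large as $(m!)^{c}$ and is \emph{exponential} in $m$. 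So you face two exponential losses, not one: the polarization constant $m^m/m!$ and this combinatorial weight. Beating them separately, as you suggest, does not work; the actual arguments (Defant--Frerick--Ortega-Cerd\`a--Ouna\"ies--Seip, and then Bayart--Pellegrino--Seoane-Sep\'ulveda) interleave the multilinear estimate with a blocking/interpolation scheme and a Harris-type comparison so that the two losses are never incurred in full. Your identification of the polarization obstacle is correct, but the proposed resolution (``subexponential polarization estimate'' plus independent combinatorial cleanup) is not how the proof goes and would not, as written, yield a subexponential $B_k$.
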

	\begin{proof}[Proof of Proposition \ref{prop_subexp}.]
		By the generalized mean inequality and (\ref{eq_boh_hil}), we have
		\begin{equation}
			\sum_{|\alpha| = k} |a_{\alpha}|
			\leq
			B_k
			\cdot
			\binom{r + k}{r}^{1 - \frac{k + 1}{2k}}
			\cdot
			\| P \|,
		\end{equation}
		in the notations (\ref{eq_poly}).
		In particular, since the binomial coefficients $\binom{r + k}{r}$ are polynomials in $k$ for fixed $r$ (and, hence, subexponential in $k$), we deduce Proposition \ref{prop_subexp} from Theorem \ref{thm_boh_hil_subx}.
	\end{proof}
	\begin{rem}
		We learned from Sébastien Boucksom that one can bypass the use of Theorem \ref{thm_boh_hil_subx} in the proof of Proposition \ref{prop_subexp} by a maximum principle, implying that for any polynomial $P$, we have $\sup_{x \in \mathbb{D}^r} | P(x_1, \cdots, x_r) |
		=
		\sup_{x \in (\partial \mathbb{D})^r} | P(x_1, \cdots, x_r) |$,
		and Parseval's theorem.
	\end{rem}
	\begin{sloppypar}
	\begin{proof}[Proof of Theorem \ref{thm_sym_equiv} in the special case when $V = \comp^r$ and $N_V := \| \cdot \|_V := \textit{l}_1$.]
		From (\ref{eq_compar_3_nmrs}), it is sufficient to show that $\sym_{\pi}(N_V)$, considered up to a subexponential constant, is bounded from above by $\sym_{{\rm{ev}}}(N_V)$.
		\par 
		Let us denote by $x_1, \ldots, x_r$ the coordinate vectors in $\comp^r$.
		We use the notation (\ref{eq_poly}) for $P \in \sym^k(V)$, $k \in \nat^*$.
		Since the dual of the $\textit{l}_1$-norm is given by the $\textit{l}_{\infty}$-norm on $\comp^r$, (\ref{eq_sup_norm_polll}) gives us 
		\begin{equation}\label{eq_nmrsp_1}
			\| P \|^{{\rm{ev}}}_{\textit{l}_1, k}
			=
			\| P \|.
		\end{equation}
		On another hand, since projective tensor norms behave multiplicatively on $\textit{l}_1$-spaces, i.e. $(\comp^n, \textit{l}_1) \otimes_{\pi} (\comp^n, \textit{l}_1) = (\comp^{nm}, \textit{l}_1)$, cf. \cite[Exercise 2.8]{RyanTensProd}, the norm $\sym_{\pi}(\textit{l}_1)$ corresponds to the sum of absolute values of the coefficients occurring in the representation (\ref{eq_poly}), i.e. we have 
		\begin{equation}\label{eq_nmrsp_2}
			\| P \|^{\sym, \pi}_{\textit{l}_1, k}
			=
			\sum_{|\alpha| = k} |a_{\alpha}|.
		\end{equation}
		We conclude by Proposition \ref{prop_subexp} and (\ref{eq_nmrsp_1}), (\ref{eq_nmrsp_2}) that $\sym_{\pi}(N_V)$, considered up to a subexponential constant, can be bounded from above by ${\rm{Ban}}^{\infty}(FS(N_V))$.
	\end{proof}
	\end{sloppypar}
		 
	\subsection{Projective tensor norms and holomorphic extension theorem}\label{sect_proj_t_ot}
	
	The main goal of this section is to prove Theorem \ref{thm_sym_equiv} in its full generality.
	Surprisingly, our main technical tool in the proof of this purely functional-analytic statement comes from complex geometry.
	We also use the following classical result.
	\begin{lem}[{cf. \cite[Lemma 2.2]{PolarConst}}]\label{lem_proj_fun_an}
		For any finite dimensional complex normed vector space $(V, \| \cdot \|_V)$, and any $\epsilon > 0$, there is $l \in \nat^*$ and a surjective map $\pi : \comp^l \to V$, such that $\| \cdot \|_V$ is related to the quotient norm associated with the $\textit{l}_1$-norm on $\comp^l$ as follows
		\begin{equation}\label{eq_proj_fun_an}
			\exp(-\epsilon) \cdot [\textit{l}_1] \leq \| \cdot \|_V \leq [\textit{l}_1].
		\end{equation}
	\end{lem}
	\begin{proof}[Proof of Theorem \ref{thm_sym_equiv}.]
		Since Theorem \ref{thm_sym_equiv} holds for $\textit{l}_1$-norms by the result from Section \ref{sect_bohn_hill}, we deduce by Lemma \ref{lem_proj_fun_an} that it is enough to show that the validity of Theorem \ref{thm_sym_equiv} is stable under taking quotients, i.e. if Theorem \ref{thm_sym_equiv} holds for a normed vector space $(U, N_U)$, then it holds for any normed quotient $(V, N_V)$, $\pi : U \to V$.
		As we shall see below, this is a consequence of the semiclassical version of Ohsawa-Takegoshi extension theorem.
		We consider the embedding 
		\begin{equation}
			{\rm{Im}}_{\pi} : \mathbb{P}(V^*) \to \mathbb{P}(U^*).
		\end{equation}
		Clearly, under this embedding, the associated restriction operator, which we denote by $\res_{\pi, k}$, and the projection map to the symmetric tensors induced by $\pi$, which we denote by $\sym^k \pi$, can be put with the identifications (\ref{eq_sym_isom}) into the following commutative diagram 
		\begin{equation}\label{eq_comm_diag}
		\begin{CD}
			H^0(\mathbb{P}(U^*), \mathscr{O}(k))  @> \res_{\pi, k} >> H^0(\mathbb{P}(V^*), \mathscr{O}(k)) 
			\\
			@| {}  @| {} 
			\\
			\sym^k(U)  @> \sym^k \pi >> \sym^k(V).
		\end{CD}
		\end{equation}
		Since $(V, N_V)$ is a quotient of $(U, N_U)$, we also have 
		\begin{equation}\label{eq_fs_bnd}
			FS(N_V) = FS(N_U)|_{\mathbb{P}(V^*)}.
		\end{equation}
		\par 
		From Theorem \ref{thm_ot_semi}, (\ref{eq_sum_ev_norm}), (\ref{eq_comm_diag}) and (\ref{eq_fs_bnd}), we conclude that for any $\epsilon > 0$, there is $k_0 \in \nat^*$, such that for any $k \geq k_0$, $f \in \sym^k(V)$, there is $g \in \sym^k(U)$, such that $\sym^k \pi(g) = f$, and
		\begin{equation}\label{eq_fin_1}
			\| f \|^{{\rm{ev}}}_{N_V, k}
			\geq
			\exp(- \epsilon k)
			\cdot
			\| g \|^{{\rm{ev}}}_{N_U, k}
		\end{equation}
		\par 
		Now, since Theorem \ref{thm_sym_equiv} holds for $(U, N_U)$, we deduce that there is $k_1 \in \nat^*$, such that for any $k \geq k_1$, $g \in \sym^k(U)$, we have
		\begin{equation}\label{eq_fin_2}
			\| g \|^{{\rm{ev}}}_{N_U, k}
			\geq
			\exp(- \epsilon k)
			\cdot
			\| g \|^{\sym, \pi}_{N_U, k}.
		\end{equation}
		Since $(V, N_V)$ is a quotient of $(U, N_U)$, for any $x \in U$, we have 
		\begin{equation}
			\| x \|_U \geq \| \pi(x) \|_V.
		\end{equation} 
		From this, the definition of the projective tensor norm and Lemma \ref{lem_sym_quot_inj}, we deduce that for any $k \in \nat^*$, $f \in \sym^k(V)$ and $g \in \sym^k(U)$, verifying $\sym^k \pi(g) = f$, we have
		\begin{equation}\label{eq_fin_3}
			\| g \|^{\sym, \pi}_{N_U, k}
			\geq
			\| f \|^{\sym, \pi}_{N_V, k}.
		\end{equation}
		\par 
		From (\ref{eq_fin_1}), (\ref{eq_fin_2}) and (\ref{eq_fin_3}), we see that for any $k \geq \max \{ k_0, k_1 \}$, $f \in \sym^k(V)$, we have
		\begin{equation}\label{eq_fin_4}
			\| f \|^{{\rm{ev}}}_{N_V, k}
			\geq
			\exp(- 2 \epsilon k)
			\cdot
			\| f \|^{\sym, \pi}_{N_V, k}.
		\end{equation}
		As $\epsilon > 0$ is arbitrary, from (\ref{eq_compar_3_nmrs}) and (\ref{eq_fin_4}), we conclude that $\sym_{{\rm{ev}}}(N_V)$ and $\sym_{\pi}(N_V)$ are asymptotically equivalent.
		As described after (\ref{eq_compar_3_nmrs}), this finishes the proof.
	\end{proof}
	
\section{Limiting behavior of jumping measures and geodesic rays}\label{sect_gr_jm}
	The main goal of this section is to study the limiting behavior of jumping measures of submultiplicative filtrations and to establish equivalence of several definitions of geodesic rays.
	More precisely, in Section \ref{sect_pf_filt}, we state the main result of this section making a connection between the asymptotic properties of filtration and the associated geodesic ray. 
	Then in Sections \ref{sec_spec_meas}, \ref{sect_filt}, which are more or less independent from the rest of the article, we give an alternative description of the geodesic ray featuring in Section \ref{sect_pf_filt}.

	\subsection{From submultiplicative filtrations to rays of submultiplicative norms}\label{sect_pf_filt}
	The main goal of this section is to provide an application of Theorem \ref{thm_char2} to the asymptotic study of submultiplicative filtrations.
	For this, on an arbitrary section ring, we associate with any submultiplicative filtration $\mathcal{F}$ a ray of submultiplicative norms.
	When this ray emanates from ${\rm{Ban}}^{\infty}(h^L_0)$ for a certain regularizable from above psh metric $h^L_0$ on $L$ and bounded $\mathcal{F}$, we obtain a ray of metrics on the line bundle through the Fubini-Study construction.
	We prove that this ray of metrics is geodesic and compare it with the Bergman geodesic ray of Phong-Sturm \cite{PhongSturmTestGeodK} and Ross-Witt Nystr{\"o}m \cite{RossNystAnalTConf}.
	\par 
	Now, let us first associate with any given submultiplicative filtration a ray of submultiplicative graded norms.
	More precisely, we fix a submultiplicative graded norm $N = \sum N_k$, $N_k := \| \cdot \|_k$, over the section ring $R(X, L)$.
	For a fixed submultiplicative filtration $\mathcal{F}$ on $R(X, L)$, we define by the procedure (\ref{eq_ray_norm_defn0}) for any $t \in [0, +\infty[$, $k \in \nat^*$, the ray of norms $N^t_{\mathcal{F},  k} := \| \cdot \|^t_{\mathcal{F}, k}$, over $H^0(X, L^{\otimes k})$, emanating from $N_k$.
	An easy verification shows that by submultiplicativity of $N$ and $\mathcal{F}$, for any $t \in [0, +\infty[$, the graded norm $N^t_{\mathcal{F}} = \sum N^t_{\mathcal{F},  k}$ is submultiplicative.
	\par
	We will now specify this to $N = {\rm{Ban}}^{\infty}(h^L_0)$ for a certain regularizable from above psh metric $h^L_0$ on $L$ and a bounded submultiplicative filtration $\mathcal{F}$.
	From the boundness of $\mathcal{F}$ (and of $h^L_0$), we deduce that the norm $N^t_{\mathcal{F}}$ is bounded for any $t \in [0, +\infty[$.
	Hence, by Theorem \ref{thm_char2} and Corollary \ref{cor_psh_appr_below}, we conclude that for any $p \in [1, +\infty[$, $t \in [0, + \infty[$, the following equivalence holds
	\begin{equation}\label{eq_n1_equiv}
		N^t_{\mathcal{F}}
		\sim_p
		{\rm{Ban}}^{\infty}(FS(N^t_{\mathcal{F}})_*).
	\end{equation}
	Recall that in (\ref{eq_ma_geod}), we defined the notion of geodesic ray.
	\begin{thm}\label{thm_ray_geod_bergm}
		For any regularizable from above psh metric $h^L_0$ on $L$ and any bounded submultiplicative filtration $\mathcal{F}$ on $R(X, L)$, $FS(N^t_{\mathcal{F}})_*$, $t \in [0, +\infty[$ is a geodesic ray emanating from $h^L_0$.
	\end{thm}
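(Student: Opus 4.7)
The plan is to show that for every $p \in [1, +\infty[$, the ray $t \mapsto h^L_t := FS(N^t_{\mathcal{F}})_*$ is a constant-speed $d_p$-metric geodesic in $(\mathcal{E}^p_\omega, d_p)$ emanating from $h^L_0$, and then invoke unique geodesicity of $(\mathcal{E}^p_\omega, d_p)$ for $p > 1$ (Theorem \ref{thm_dar_lu_uniq_geod}) together with Berndtsson's characterisation of $d_p$-geodesics via the Monge-Amp\`ere equation (\ref{eq_ma_geod}) to conclude.

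The preliminaries are routine. Since $\mathcal{F}$ and $h^L_0$ are bounded, $N^t_{\mathcal{F}}$ is bounded for every $t \geq 0$, so $h^L_t$ is regularizable from above psh by Lemma \ref{lem_fs_dini}. The ray starts at $h^L_0$ because $N^0_{\mathcal{F}} = {\rm{Ban}}^{\infty}(h^L_0)$, $FS({\rm{Ban}}^{\infty}(h^L_0)) = Q(h^L_0)$ by Theorem \ref{thm_bouck_erikss}, and $Q(h^L_0)_* = h^L_0$ by regularizability.

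The heart of the argument is the identity
\[
d_p(h^L_s, h^L_t) = (t - s) \cdot C_p, \qquad 0 \leq s \leq t,
\]
for a constant $C_p = C_p(\mathcal{F}) \geq 0$ independent of $s,t$. By (\ref{eq_n1_equiv}), Corollary \ref{cor_d_p_norm_fs_rel} and (\ref{eq_dp_gr_n_def}),
\[
d_p(h^L_s, h^L_t) = d_p(N^s_{\mathcal{F}}, N^t_{\mathcal{F}}) = \limsup_{k \to \infty} \tfrac{1}{k} d_p(N^s_{\mathcal{F}, k}, N^t_{\mathcal{F}, k}).
\]
Applying Lemma \ref{lem_log_spec_ray} to the ray $(N^u_{\mathcal{F}, k})_{u \geq 0}$ with initial norm ${\rm{Ban}}^{\infty}_k(h^L_0)$ gives $\lambda_i(N^t_{\mathcal{F}, k}, N^s_{\mathcal{F}, k}) = (t-s) e_{\mathcal{F}}(i, k) + O(\log \dim H^0(X, L^k))$ uniformly in $i$. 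Since $\mathcal{F}$ is bounded, $e_{\mathcal{F}}(i, k) = O(k)$, and since $\dim H^0(X, L^k)$ grows polynomially, after raising to the $p$-th power, summing and dividing by $k^p \dim H^0(X, L^k)$, the error vanishes as $k\to\infty$, and
\[
\tfrac{1}{k^p} d_p(N^s_{\mathcal{F}, k}, N^t_{\mathcal{F}, k})^p = (t-s)^p \cdot \tfrac{1}{k^p \dim H^0(X, L^k)} \sum_i |e_{\mathcal{F}}(i, k)|^p + o(1).
\]
The right-hand factor is bounded and converges to $\int |x|^p d\mu_{\mathcal{F}}(x) =: C_p^p$ by the weak convergence of jumping measures of Boucksom-Chen \cite{BouckChen} combined with the uniform boundedness of their supports. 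This yields the claimed identity and, in particular, the additivity $d_p(h^L_r, h^L_s) + d_p(h^L_s, h^L_t) = d_p(h^L_r, h^L_t)$, i.e.\ the ray is a constant-speed $d_p$-metric geodesic.

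To conclude, fix $p > 1$ and $0 \leq s < t$. By Theorem \ref{thm_dar_lu_uniq_geod}, the segment $[s, t] \ni u \mapsto h^L_u$ coincides with the unique $d_p$-geodesic joining $h^L_s$ to $h^L_t$, and hence with the envelope geodesic (\ref{eq_geod_as_env}); by Berndtsson's theorem recalled after (\ref{eq_ma_geod}), the associated $\hat{h}^L$ on $X \times \mathbb{D}_{e^{-t}, e^{-s}}$ is $\pi^*\omega$-psh and solves $(\pi^*\omega + \imun \partial \dbar \hat{h}^L)^{n+1} = 0$, and arbitrariness of $s, t$ extends this to all of $X \times \mathbb{D}^*$, which is precisely the geodesic-ray condition (\ref{eq_deog_r_into}). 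The main expected obstacle is the convergence of moments of jumping measures; should invoking \cite{BouckChen} prove awkward (its algebraic methods being somewhat foreign to the present approach), one may bypass identifying $C_p$ explicitly by applying Lemma \ref{lem_log_spec_ray} simultaneously to the three pairs $(r, s), (s, t), (r, t)$ and deducing the $d_p$-additivity up to $o(1)$ at the finite level directly from the relation $\lambda_i(N^t, N^r) = \lambda_i(N^t, N^s) + \lambda_i(N^s, N^r) + O(\log \dim)$.
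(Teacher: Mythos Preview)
Your proposal is correct and follows essentially the same route as the paper: both combine Lemma~\ref{lem_log_spec_ray} with Corollary~\ref{cor_d_p_norm_fs_rel} to obtain $d_p(FS(N^t_{\mathcal{F}})_*, FS(N^s_{\mathcal{F}})_*) = (t-s)\cdot C_p$, and then invoke Theorem~\ref{thm_dar_lu_uniq_geod} to conclude. The only difference is cosmetic: the paper simply writes $C_p = d_p(N^1_{\mathcal{F}}, N^0_{\mathcal{F}})$ via Minkowski without identifying it further, whereas you take a detour through Boucksom--Chen to name it as $(\int |x|^p\, d\mu_{\mathcal{F}})^{1/p}$---an unnecessary step, as you yourself note in your closing remark.
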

	\begin{proof}
		From Lemma \ref{lem_log_spec_ray} and Minkowski inequality, for any $t \geq s \geq 0$, $p \in [1, +\infty[$, we have
		\begin{equation}\label{eq_ray_nm_11}
			d_p(N_{\mathcal{F}}^t, N_{\mathcal{F}}^s)
			=
			(t - s) 
			\cdot
			d_p(N_{\mathcal{F}}^1, N_{\mathcal{F}}^0)
		\end{equation}
		Remark, however, that by Corollary \ref{cor_d_p_norm_fs_rel}, we have
		\begin{equation}\label{eq_ray_nm_101}
			d_p(N_{\mathcal{F}}^t, N_{\mathcal{F}}^s)
			=
			d_p \big( FS(N^t_{\mathcal{F}})_*, FS(N^s_{\mathcal{F}})_* \big).
		\end{equation}
		By (\ref{eq_ray_nm_11}) and (\ref{eq_ray_nm_101}), we conclude that the curve $FS(N^t_{\mathcal{F}})_*$, $t \in [0, +\infty[$, is a metric geodesic in any of the metric spaces $(\mathcal{E}_{\omega}^p, d_p)$, $p \in [1, +\infty[$.
		By Theorem \ref{thm_dar_lu_uniq_geod}, we yield that $FS(N^t_{\mathcal{F}})_*$, $t \in [0, +\infty[$, is a geodesic ray.
		As we assumed that $h^L_0$ is regularizable from above, $FS(N^t_{\mathcal{F}})_*$ emanates from $h^L_0$ by Theorem \ref{thm_bouck_erikss}.
	\end{proof}
	We call the above ray the \textit{Fubini-Study geodesic ray} and denote it by $h^{L, FS}_{\mathcal{F}, t} := FS(N^t_{\mathcal{F}})_*$, $t \in [0, +\infty[$.
	Let us now compare the Fubini-Study geodesic ray with the Bergman geodesic ray, defined by Phong-Sturm \cite{PhongSturmTestGeodK} and Ross-Witt Nystr{\"o}m \cite{RossNystAnalTConf}.
	To recall the definition of the latter one, we fix a continuous psh metric $h^L_0$ on $L$, a volume form $\mu$ of unit volume on $X$, and consider the $L^2$-norm ${\rm{Hilb}}(h^L, \mu)$ on $R(X, L)$, defined as in (\ref{eq_hilb_defn}).
	Now, for a fixed bounded submultiplicative filtration $\mathcal{F}$ on $R(X, L)$, and any $k \in \nat^*$, we define the ray of Hermitian norms $H^t_{\mathcal{F}, k} := \| \cdot \|^{t, H}_{\mathcal{F}, k}$ on $H^0(X, L^{\otimes k})$, emanating from ${\rm{Hilb}}(h^L, \mu)$, as in (\ref{eq_bas_st}).
	Let $H^t_{\mathcal{F}} = \sum H^t_{\mathcal{F}, k}$ be the induced graded norm on $R(X, L)$.
	We define the \textit{Bergman geodesic ray} $h^{L, \mathcal{B}}_{\mathcal{F}, t}$, $t \in [0, +\infty[$ as
	\begin{equation}\label{defn_ray_geod_bergm}
		h^{L, \mathcal{B}}_{\mathcal{F}, t} := \big( \lim_{k \to \infty} \inf_{l \geq k} FS(H^t_{\mathcal{F}, l})^{\frac{1}{l}} \big)_*.
	\end{equation}
	From Lemma \ref{lem_fs_inf_d}, we see that this definition is equivalent to the one of Ross-Witt Nystr{\"o}m \cite[Definition 9.1]{RossNystAnalTConf} up to a change of variables $t \mapsto 2 t$. 
	The following result ties the two constructions.
	\begin{prop}\label{prop_two_norms_comp}
		For any $t \in [0, +\infty[$, $H^t_{\mathcal{F}} \sim N^t_{\mathcal{F}}$.
		In particular, Fubini-Study geodesic ray emanating from a fixed continuous psh metric coincides with the respective Bergman geodesic ray.
	\end{prop}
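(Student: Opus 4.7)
The plan is to reduce both claims to the single-space comparison provided by Lemma \ref{lem_two_norms_comp0} and then to propagate the resulting equivalence through the Fubini-Study construction.

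For the equivalence $H^t_{\mathcal{F}} \sim N^t_{\mathcal{F}}$, I apply Lemma \ref{lem_two_norms_comp0} degree by degree with $V := H^0(X, L^k)$, Hermitian norm $N_H := {\rm{Hilb}}_k(h^L_0, \mu)$, general norm $N_V := {\rm{Ban}}^{\infty}_k(h^L_0)$, and the filtration on $V$ induced by $\mathcal{F}$. By construction, $H^t_{\mathcal{F}, k}$ agrees with the perpendicular ray $N^{\perp, t}_{H, \mathcal{F}}$ from (\ref{eq_bas_st}), while $N^t_{\mathcal{F}, k}$ agrees with the infimum-type ray $N^t_{V, \mathcal{F}}$ from (\ref{eq_ray_norm_defn0}). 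Lemma \ref{lem_two_norms_comp0} then yields
\[
d_{+\infty}\bigl(H^t_{\mathcal{F}, k}, N^t_{\mathcal{F}, k}\bigr) \leq d_{+\infty}\bigl({\rm{Hilb}}_k(h^L_0, \mu), {\rm{Ban}}^{\infty}_k(h^L_0)\bigr) + \log \dim H^0(X, L^k).
\]
Since $h^L_0$ is continuous psh, the second part of Proposition \ref{prop_ban_hilb_eq} gives ${\rm{Hilb}}(h^L_0, \mu) \sim {\rm{Ban}}^{\infty}(h^L_0)$, so the first term on the right is $o(k)$; the second is $O(\log k)$ because $\dim H^0(X, L^k)$ grows polynomially in $k$. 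Dividing by $k$ and letting $k \to \infty$ delivers the desired $\sim$-equivalence.

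For the coincidence of the two geodesic rays, I observe that by Lemma \ref{lem_fs_inf_d}, a multiplicative gap between $N^t_{\mathcal{F}, k}$ and $H^t_{\mathcal{F}, k}$ transfers verbatim to the same gap between $FS(N^t_{\mathcal{F}, k})$ and $FS(H^t_{\mathcal{F}, k})$ on $L^k$. Hence the equivalence just established forces the ratio $FS(N^t_{\mathcal{F}, k})^{1/k} / FS(H^t_{\mathcal{F}, k})^{1/k}$ to converge to $1$ uniformly on $X$ as $k \to \infty$. Since $N^t_{\mathcal{F}}$ is submultiplicative, Lemma \ref{lem_fs_dini} ensures that $FS(N^t_{\mathcal{F}, k})^{1/k}$ decreases to $FS(N^t_{\mathcal{F}})$. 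Consequently the $\liminf$ defining the Bergman ray in (\ref{defn_ray_geod_bergm}) coincides with this decreasing limit, and taking upper semi-continuous regularization yields $h^{L, \mathcal{B}}_{\mathcal{F}, t} = FS(N^t_{\mathcal{F}})_* = h^{L, FS}_{\mathcal{F}, t}$.

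The substantive content of the argument is already encoded in Lemma \ref{lem_two_norms_comp0} and Proposition \ref{prop_ban_hilb_eq}, so the main task here is bookkeeping. The one mildly delicate point is the passage from the monotone decreasing sequence of Fubini-Study metrics attached to the submultiplicative ray $N^t_{\mathcal{F}}$ to the $\liminf$ used to define the Bergman ray; this is harmless because the two sequences are asymptotically pinched within a subexponential factor, which suffices to identify their limits after regularization.
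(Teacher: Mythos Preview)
Your proof is correct and follows the same approach as the paper, which simply cites Lemma~\ref{lem_two_norms_comp0} and Proposition~\ref{prop_ban_hilb_eq}; you have merely spelled out the details the paper leaves implicit. One small imprecision: Lemma~\ref{lem_fs_dini} does not say that $FS(N^t_{\mathcal{F}, k})^{1/k}$ \emph{decreases} to $FS(N^t_{\mathcal{F}})$, only that it converges pointwise (Fekete gives convergence to the infimum, not monotonicity along the full sequence), but this is harmless since pointwise convergence is all you use to identify the $\liminf$ in (\ref{defn_ray_geod_bergm}) with $FS(N^t_{\mathcal{F}})$.
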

	\begin{proof}
		It follows directly from Lemma \ref{lem_two_norms_comp0} and Proposition \ref{prop_ban_hilb_eq}.
	\end{proof}
	\begin{rem}
		Phong-Sturm in \cite[Theorem 1]{PhongSturmTestGeodK} established that Bergman geodesic ray emanating from a smooth positive initial point is indeed a geodesic ray for any filtration arising from a test configuration, see Section \ref{sect_filt} for a recap of the relation between the two.
		An alternative proof was given by Ross-Witt Nystr{\"o}m in \cite[\S 9]{RossNystAnalTConf}.
		From Proposition \ref{prop_two_norms_comp}, Theorem \ref{thm_ray_geod_bergm} gives a new proof of this result for more general initial points of the ray.
	\end{rem}
	\par 
	Let us now finally state the main result of this section.
	We define the sequence of \textit{jumping measures} $\mu_{\mathcal{F}, k}$, $k \in \nat^*$, on $\real$ of $\mathcal{F}$ as follows
	\begin{equation}\label{eq_jump_meas_d}
		\mu_{\mathcal{F}, k} := \frac{1}{\dim H^0(X, L^{\otimes k})} \sum_{j = 1}^{\dim H^0(X, L^{\otimes k})} \delta_{k^{-1} e_{\mathcal{F}}(j, k)}, 
	\end{equation}
	where $\delta_x$ is the Dirac mass at $x \in \real$ and $e_{\mathcal{F}}(j, k)$ are the \textit{jumping numbers}, defined as follows
	\begin{equation}\label{eq_defn_jump_numb}
		e_{\mathcal{F}}(j, k) := \sup \Big\{ t \in \real : \dim \mathcal{F}^t H^0(X, L^{\otimes k}) \geq j \Big\}.
	\end{equation}
	\par 
	\begin{sloppypar}
	Now, for any geodesic ray $h^L_t$, $t \in [0, +\infty[$, emanating from a smooth positive metric, one can define its \textit{spectral measure} by $(-\dot{h}^L_t)_* (c_1(L, h^L_0)^n / \int_X c_1(L)^n)$, where the derivative $\dot{h}^L_t := (h^L_t)^{-1} \frac{\partial h^L_t}{\partial t}|_{t = 0}$ can be defined by convexity, see (\ref{eq_spec_meas_defn_seg}), despite the possible absence of regularity.
	For this measure, in particular, the $p$-absolute moments are related with the slopes of the $p$-Finsler distances between points on the geodesic ray, see (\ref{eq_d_p_berndss}).
	For the Fubini-Study geodesic ray constructed from a filtration $\mathcal{F}$, we denote the spectral measure by $\mu_{\mathcal{F}}$.
	As it follows from the next theorem (and implicit in the notation), $\mu_{\mathcal{F}}$ is independent from the initial point $h^L_0$ of the ray.
	\end{sloppypar}
	\begin{thm}\label{thm_filt}
		For any bounded submultiplicative filtration $\mathcal{F}$ on a section ring $R(X, L)$ of an ample line bundle $L$, the jumping measures $\mu_{\mathcal{F}, k}$, $k \in \nat^*$, converge weakly, as $k \to \infty$, to $\mu_{\mathcal{F}}$.
	\end{thm}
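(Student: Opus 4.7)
My plan is to reduce the weak convergence to the convergence of the $p$-th absolute moments for every $p \in [1, +\infty[$, and to identify each such moment on both sides with a normalised $d_p$-distance between submultiplicative norms on $H^0(X, L^k)$, so that Theorem \ref{thm_char2} takes over. Since $\mathcal{F}$ is bounded, the measures $\mu_{\mathcal{F}, k}$ and $\mu_{\mathcal{F}}$ are all supported in a fixed compact interval, and the Weierstrass theorem reduces weak convergence to $\int |x|^p d\mu_{\mathcal{F}, k} \to \int |x|^p d\mu_{\mathcal{F}}$ for every $p \in [1, +\infty[$; in particular the failure of the $p = +\infty$ case of Theorem \ref{thm_char2} noted in Remark \ref{rem_char2}a) is not an obstruction.

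Fix a smooth positive reference metric $h_0^L$ on $L$ and let $N_{k, t}$, $t \geq 0$, be the ray of submultiplicative norms of Section \ref{sect_pf_filt} interpolating $\mathrm{Ban}^\infty_k(h_0^L)$ with the non-Archimedean norm associated to $\mathcal{F}$. By the very definition \eqref{eq_defn_jump_numb} of the jumping numbers, the logarithmic relative spectrum between $N_{k, 0}$ and $N_{k, t}$ is, up to a $t$-independent error, the rescaled jumping sequence $t \cdot e_{\mathcal{F}}(j, k)$, so combining \eqref{eq_dp_defn_norms} with \eqref{eq_jump_meas_d} yields
\begin{equation}
\lim_{t \to + \infty} (tk)^{-p} \cdot \big[ d_p(N_{k, 0}, N_{k, t}) \big]^p = \int |x|^p \, d\mu_{\mathcal{F}, k}.
\end{equation}
On the geometric side, Theorem \ref{thm_ray_geod_bergm} ensures that $h_t^L := FS(\sum_k N_{\cdot, t})$ is a psh geodesic ray emanating from $h_0^L$, and \eqref{eq_d_p_berndss} identifies $\int |x|^p d\mu_{\mathcal{F}}$ with the $p$-th power of the slope of the $p$-Finsler distance between $h_0^L$ and $h_t^L$ along this ray; that slope is in turn the $k \to +\infty$ limit of $(tk)^{-1} d_p(\mathrm{Ban}^\infty_k(h_0^L), \mathrm{Ban}^\infty_k(h_t^L))$ by standard Bouche–Tian-type quantisation. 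Applying Theorem \ref{thm_char2} to the bounded submultiplicative graded norm $\sum_k N_{\cdot, t}$, whose Fubini-Study metric is $h_t^L$, gives $k^{-1} d_p(N_{k, t}, \mathrm{Ban}^\infty_k(h_t^L)) \to 0$, so the triangle inequality for $d_p$ combined with the previous display produces $\int |x|^p d\mu_{\mathcal{F}, k} \to \int |x|^p d\mu_{\mathcal{F}}$.

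The main obstacle is the interchange of the limits $k \to + \infty$ and $t \to + \infty$: Theorem \ref{thm_char2} produces only an $o(k)$ error at each fixed $t$, while the moment identification demands some uniformity in $t$ as the latter grows. I would deal with this by exploiting the monotonicity and convexity of $t \mapsto k^{-1} d_p(N_{k, 0}, N_{k, t})$ along the ray together with the boundedness of $\mathcal{F}$, which effectively compactifies the problem in $t/k$ and permits a diagonal extraction. A secondary, more cosmetic point is to verify that the Fubini-Study ray $h_t^L$ genuinely emanates from $h_0^L$ at $t = 0^+$ (rather than from an upper semicontinuous regularization of it), so that the spectral measure $\mu_{\mathcal{F}}$ on the right-hand side is unambiguously defined at the base point.
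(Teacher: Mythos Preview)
Your overall strategy is the same as the paper's: identify moments of $\mu_{\mathcal{F},k}$ with $d_p$-distances along the ray $N^t_{\mathcal{F}}$, invoke Theorem~\ref{thm_char2} (packaged as Corollary~\ref{cor_d_p_norm_fs_rel}) to pass to the Fubini--Study geodesic ray, and read off the limit from~\eqref{eq_d_p_berndss}. Two points need correction.

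First, a genuine gap: convergence of the \emph{absolute} moments $\int|x|^p\,d\mu$ does \emph{not} imply weak convergence of compactly supported measures on an interval containing $0$, because the functions $|x|^p$ are even in $x$ and only determine the law of $|X|$; take $\mu_k=\delta_1$, $\mu=\delta_{-1}$. Your appeal to Weierstrass is therefore invalid as stated. The paper fixes this by first replacing $\mathcal{F}$ with the shift $\mathcal{F}_0^{\lambda}H^0(X,L^k):=\mathcal{F}^{\lambda+Ck}H^0(X,L^k)$, so that~\eqref{eq_filt_bnd_zero} holds and all jumping numbers become non-positive. After the shift, $N^t_{\mathcal{F}}\geq N^0_{\mathcal{F}}$, hence $\dot{u}_0\leq 0$, and both $\mu_{\mathcal{F},k}$ and $\mu_{\mathcal{F}}$ are supported in $(-\infty,0]$; then $|x|^p=(-x)^p$ on all supports and your reduction becomes legitimate.

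Second, the limit $t\to+\infty$ and the resulting interchange-of-limits problem are unnecessary. Lemma~\ref{lem_log_spec_ray} already gives $|\lambda_i(N^t_{\mathcal{F},k},N^0_{\mathcal{F},k})-t\,e_{\mathcal{F}}(i,k)|\leq\log\dim H^0(X,L^k)$ at every fixed $t$, so by Minkowski (after the shift)
\[
\Bigl|\,\tfrac{1}{k}\,d_p(N^1_{\mathcal{F},k},N^0_{\mathcal{F},k})-\Bigl(\int(-x)^p\,d\mu_{\mathcal{F},k}\Bigr)^{1/p}\Bigr|\leq\tfrac{1}{k}\log\dim H^0(X,L^k)\longrightarrow 0.
\]
Thus one may work at $t=1$ throughout, exactly as the paper does in~\eqref{eq_dp_jump_norm2}, and your ``main obstacle'' disappears. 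Your secondary point is handled by Theorem~\ref{thm_bouck_erikss}: since $h^L_0$ is smooth positive (hence regularizable from above), $FS({\rm{Ban}}^{\infty}(h^L_0))_*=Q(h^L_0)_*=h^L_0$, so Theorem~\ref{thm_ray_geod_bergm} indeed gives a ray emanating from $h^L_0$ itself.
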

	\begin{rem}
		a) The existence of the weak limit was proved by Boucksom-Chen \cite{BouckChen} in a more general setting of big line bundles, refining an earlier work of Chen \cite{ChenHNolyg}.
		\par 
		b) When the filtration is induced by an ample test configuration, Theorem \ref{thm_filt} was established for product test configurations by Witt Nystr{\"o}m \cite[Theorems 1.1 and 1.4]{NystOkounTest} and for general test configurations by Hisamoto in \cite[Theorem 1.1]{HisamSpecMeas}, proving a conjecture \cite[after Theorem 1.4]{NystOkounTest}.
		\par 
		c) Our method differs from \cite{ChenHNolyg}, \cite{BouckChen}, \cite{NystOkounTest} and \cite{HisamSpecMeas}; it is not algebraic in nature and instead of Okounkov bodies, we rely on Fubini-Study geodesic rays and Theorem \ref{thm_char2}.
	\end{rem}
	\begin{proof}
		Let us first remark that it is enough to establish Theorem \ref{thm_filt} in the special case when the filtration $\mathcal{F}$ satisfies the additional assumption 
		\begin{equation}\label{eq_filt_bnd_zero}
			\mathcal{F}^0 R(X, L) = \{0\}.
		\end{equation}
		To see this, remark that since $\mathcal{F}$ is bounded, there is $C > 0$, verifying $\mathcal{F}^{C k} H^0(X, L^{\otimes k}) = \{0\}$.
		Consider now another filtration $\mathcal{F}_0$ on $R(X, L)$, defined for any $k \in \nat$, $\lambda \in \real$, as follows $\mathcal{F}_0^{\lambda} H^0(X, L^{\otimes k}) = \mathcal{F}^{\lambda + Ck} H^0(X, L^{\otimes k})$.
		Clearly, $\mathcal{F}_0$ is submultiplicative and bounded whenever $\mathcal{F}$ is submultiplicative and bounded.
		An easy verification shows that establishing Theorem \ref{thm_filt} for $\mathcal{F}_0$ and $\mathcal{F}$ is equivalent. We, hence, assume from now on that $\mathcal{F}$ satisfies (\ref{eq_filt_bnd_zero}).
		\par 
		By Lemma \ref{lem_log_spec_ray}, (\ref{eq_filt_bnd_zero}) and Minkowski inequality, the jumping measures $\mu_{\mathcal{F}, k}$ of $\mathcal{F}$ are related to the ray of submultiplicative norms $N_{\mathcal{F}, k}^t$, $t \in [0, +\infty[$, constructed in (\ref{eq_ray_norm_defn0}), by
		\begin{equation}\label{eq_dp_jump_norm2}
			\Big| 
			d_p(N_{\mathcal{F}, k}^1, N_{\mathcal{F}, k}^0)
			-
			k
			\cdot
			\sqrt[p]{
			\int (-x)^p  \mu_{\mathcal{F}, k}(x)}
			\Big|
			\leq
			\log \dim H^0(X, L^{\otimes k}).
		\end{equation}
		Since $N_{\mathcal{F}}^t \geq N_{\mathcal{F}}^0$, we have $h^{L, FS}_{\mathcal{F}, 1} \geq h^{L, FS}_{\mathcal{F}, 0}$ and Theorem \ref{thm_filt} follows from (\ref{eq_char_spec_meas}), (\ref{eq_ray_nm_101}) and (\ref{eq_dp_jump_norm2}).
	\end{proof}

	\subsection{Maximal geodesic rays from submultiplicative filtrations}\label{sec_spec_meas}
	The main goal of this section is to give an alternative description of the geodesic ray from (\ref{defn_ray_geod_bergm}).
	This and the next section are essentially independent from the rest of the article.
	\par 
	In order to state our result, let us recall the definition of maximal geodesic rays.
	This definition requires fixing initial point of the geodesic ray and its singularities at $+\infty$ (in the form of Lelong numbers).
	Maximal geodesic ray is then the supremum over all geodesic rays, verifying these “boundary conditions".
	The precise description of this requires some basic notions from non-Archimedean geometry.
	\par 
	Denote by $X^{an}$ the Berkovich analytification of the projective manifold $X$ with respect to the trivial absolute value on the ground field $\comp$. 
	We view $X^{an}$ as a topological space, whose points can be understood as semivaluations on $X$, i.e. valuations $v : \comp(Y)^* \to \real$ on the function field $\comp(Y)$ of subvarieties $Y$ of $X$, trivial on $\comp$. 
	In particular, $X^{an}$ contains the set $X^{div}$ of divisorial valuations on $\comp(X)$, i.e. valuations $v :  \comp(X)^* \to \real$ of the form $v = c \cdot {\rm{ord}}_E$, where $c \in \mathbb{Q}_{> 0}$, $E$ is a prime divisor on some normal variety $Y$ mapping birationally to $X$ and ${\rm{ord}}_E$ corresponds to the valuation calculating the order of vanishing along $E$.
	Remark, in particular, that ${\rm{ord}}_{\Sigma}$ is well-defined for any submanifold $\Sigma \subset X$ through the divisorial valuation of the exceptional divisor in the blow-up of $X$ along $\Sigma$.
	The space $X^{an}$ can be seen as a compactification of $X^{div}$, endowed with the topology of pointwise convergence, see \cite[\S 6.1]{BerBouckJonYTD}.
	\par 
	Now, the projection $\pi: X \times \mathbb{D} \to X$ induces a map $(X \times \mathbb{D})^{div} \to X^{div}$; this has a canonical section $\sigma : X^{div} \to (X \times \mathbb{D})^{div}$, the Gauss extension, defined by 
	\begin{equation}\label{eq_sigm_Gauss}
		\sigma(v)\Big( \sum f_i \tau^i \Big)
		=
		\min_i \{ v(f_i) + i \}.
	\end{equation}
	The rationale behind this is that we have $\sigma({\rm{ord}}_{\Sigma}) = {\rm{ord}}_{\Sigma \times \{0\}}$ for any submanifold $\Sigma \subset X$.
	\par 
	Recall that a \textit{psh ray} is a map $U : ]0, + \infty[ \to {\rm{PSH}}(X, \omega)$, such that in the notations (\ref{eq_defn_hat_u}), (\ref{eq_subgeod_req}), the function $\hat{U}$ is $\pi^* \omega$-psh on $X \times \mathbb{D}^*$.
	Of particular importance are psh rays $U : ]0, +\infty[ \to \mathcal{E}_{\omega}^1$ called \textit{geodesic rays}; the restriction of such $U$ to each $[a, b] \in ]0, +\infty[$ coincides (up to affine reparametrization) with the distinguished psh geodesic joining $U_a$ to $U_b$, see (\ref{eq_geod_as_env}).
	We see from (\ref{eq_ma_geod}) that for bounded psh rays, this definition coincides with (\ref{eq_ma_geod}) modulo the identification (\ref{eq_pot_to_metr}).
	Given now a psh ray $U : ]0, + \infty[ \to {\rm{PSH}}(X, \omega)$ of \textit{linear growth} (i.e. such that there is $a > 0$, for which $U(t) - at$ is bounded from above, as $t \to + \infty$), define the $S^1$-invariant $\pi^* \omega$-psh function $V$ on $X \times \mathbb{D}^*$, in the notations of (\ref{eq_defn_hat_u}) by
	\begin{equation}
		V (x, \tau) := \hat{U} + a \log |\tau|.
	\end{equation}
	Then $V$ is bounded above near $X \times {0}$, hence, it uniquely extends to a $\pi^* \omega$-psh function on $X \times \mathbb{D}$, cf. \cite[Theorem I.5.23]{DemCompl}. 
 	For each divisorial valuation $w$ on $X \times \mathbb{D}$, we then can make sense of $w(V) \geq 0$ as a generic Lelong number on a suitable blowup, see \cite[\S B.6]{BerBouckJonYTD}.
 	We set $w(U) := w(V) - aw(\tau)$. 
 	This is independent of the choice of the constant $a$ by the additivity of Lelong numbers.
 	We construct the function $U_{NA} : X^{div} \to \real$, decoding the singularities of $U$ at $+ \infty$, by
 	\begin{equation}
 		U_{NA}(v) = - \sigma(v)(U).
 	\end{equation}
 	\par 
	Following \cite[Definition 6.5]{BerBouckJonYTD}, we say that a psh geodesic ray $U : [0, + \infty[ \to \mathcal{E}_{\omega}^1$ is \textit{maximal} if for any psh ray $V : ]0, + \infty[ \to \mathcal{E}_{\omega}^1$ of linear growth with $\lim_{t \to 0} V_t \leq U_0$ and $V_{NA} \leq U_{NA}$, we have $V \leq U$.
 	A \textit{maximal geodesic ray} is thus uniquely determined by $U_0$ and $U_{NA}$.
 	Remark the analogy between this and (\ref{eq_geod_as_env}).
 	However, not every geodesic ray is maximal, see \cite[Example 6.10]{BerBouckJonYTD}.
	\par 
	Now, as in (\ref{eq_filtr_norm}), we denote by  $\chi_{\mathcal{F}, k} : H^0(X, L^{\otimes k}) \to [0, +\infty[$, $k \in \nat^*$ the non-Archimedean norm on $H^0(X, L^{\otimes k})$ associated with the restriction to $H^0(X, L^{\otimes k})$ of a graded submultiplicative filtration $\mathcal{F}$ on $R(X, L)$.
	The associated graded norm $\chi_{\mathcal{F}} = \max \chi_{\mathcal{F}, k}$ on $R(X, L)$ is submultiplicative, i.e. for any $f \in H^0(X, L^{\otimes k})$, $g \in H^0(X, L^{\otimes l})$, $k, l \in \nat^*$, we have
	\begin{equation}\label{eq_na_submult_n}
		\chi_{\mathcal{F}, k + l}(f \cdot g) \leq \chi_{\mathcal{F}, k}(f) \cdot \chi_{\mathcal{F}, l}(g).
	\end{equation}
	\par 
	A graded (non-Archimedean) norm $\chi = \max \chi_k$ on $R(X, L)$ is called \textit{bounded} if there is $C > 0$, such that for any $k \in \nat^*$,  the following inequality is satisfied $\chi_k \geq \exp(- C k)$.
	Remark that since $R(X, L)$ is finitely generated, the existence of $C > 0$ such that for any $k \in \nat^*$, $\chi_k \leq \exp(C k)$ is automatic for submultiplicative norms by (\ref{eq_na_submult_n}).
	Clearly, if $\mathcal{F}$ is a bounded filtration on $R(X, L)$, $\chi_{\mathcal{F}}$ is bounded, and bounded submultiplicative filtrations on $R(X, L)$ are in one-to-one correspondence with bounded submultiplicative non-Archimedean norms on $R(X, L)$.
	\par 
	In this perspective, the construction of Boucksom-Jonsson \cite{BouckJohn21} of a non-Archimedean potential on $X^{an}$ from $\mathcal{F}$ realizes in the non-Archimedean context the complex-geometric philosophy we recalled in Section \ref{sec_fs_bdem} and Lemma \ref{lem_fs_dini} that associates to any bounded submultiplicative norm $N$ on $R(X, L)$ the Fubini-Study metric $FS(N)$ on $L$ through the associated Kodaira embedding.
	\par 
	To describe it precisely, recall that a semivaluation $v \in X^{an}$ can be naturally evaluated on a section $s \in H^0(X, M)$ of any line bundle $M$ on $X$, by defining $v(s) \in [0, +\infty]$ as the value of $v$ on the local function corresponding to $s$ in any local trivialization of $M$ at the center of the valuation, see \cite[p. 15]{BouckJohn21} for details. 
	For any $s \in H^0(X, M)$, we then can define $|s| : X^{an} \to [0, 1]$ by setting
	\begin{equation}\label{eq_nm_s_defn}
		|s|(v) := \exp(-v(s)).
	\end{equation}
	\par 
	Now, Boucksom-Jonsson in \cite[(4.3)]{BouckJohn21} associated with any non-Archimedean norm $\chi_k$ on $H^0(X, L^{\otimes k})$ the Fubini-Study potential, $FS(\chi_k) : X^{an} \to \real$, defined as follows
	\begin{equation}\label{eq_fs_fin_ord}
		FS(\chi_k) := \sup_{s \in H^0(X, L^{\otimes k}) \setminus \{0\} } \big\{ \log |s| - \log \chi_{k}(s) \big\}.
	\end{equation}
	\par 
	Whenever $\chi_k$ is associated with a graded filtration $\mathcal{F}$ on $R(X, L)$, we denote the associated Fubini-Study potentials by $FS(\mathcal{F})_{k}$.
	For bounded $\mathcal{F}$, the resulting sequence of potentials $FS(\mathcal{F})_{k}$, $k \in \nat^*$, is uniformly bounded from above.
	If the filtration $\mathcal{F}$ is, moreover, submultiplicative, then the sequence of potentials $FS(\mathcal{F})_{k}$ is superadditive and the Fubini-Study potential of a bounded submultiplicative filtration is now defined by Fekete's lemma as 
	\begin{equation}\label{eq_na_fs_defn}
		FS(\mathcal{F}) = \sup_{k \in \nat^*} \Big\{ \frac{1}{k} FS(\mathcal{F})_{k} \Big\}.
	\end{equation}
	\par 
	\begin{sloppypar}
	We will now recall some basic results from non-Archimedean pluripotential theory.
	Following \cite[\S 1.8]{BouckJohn21}, we say that a function $f$ on $X^{an}$ is a \textit{Fubini-Study function} if there is $m \in \nat^*$, base point free $s_1, \ldots, s_r \in H^0(X, L^m)$, and some $\lambda_1, \ldots, \lambda_r \in \real$, such that 
	\begin{equation}
		f = \frac{1}{m} \max_{j = 1, \ldots, r} \{ 
			\log |s_j| + \lambda_j
		\},
	\end{equation}
	where $|s_j|$ were defined in (\ref{eq_nm_s_defn}).
	Following \cite[\S 6.2, 6.3]{BerBouckJonYTD}, we say that a function $\phi : X^{an} \to [-\infty, +\infty[$ is in $PSH^{NA}(X)$ if it can be obtained as the pointwise limit of a decreasing net of Fubini-Study functions, excluding $\phi = -\infty$.
	See the analogy with Remark \ref{rem_regul_above}b).
	\par 
	Analogously to the complex situation, see  (\ref{eq_energy}), using the non-Archimedean mixed Monge-Ampère operator, one can define the energy functional $E$ on the space $CPSH^{NA}(X) := PSH^{NA}(X) \cap \ccal^0(X^{an})$, see \cite[(4.3)]{BouckJohn21}.
	This energy functional satisfies similar monotonicity properties as its complex analogue.
	Using this, it is then possible to extend $E$ to $PSH^{NA}(X)$ through the same procedure as in (\ref{eq_energy_ext}).
	We denote by $\mathcal{E}^{1, NA}$ the subset of $PSH^{NA}(X)$ with finite (i.e. not equal to $- \infty$) energy.	
	\end{sloppypar}
	\begin{thm}[{\cite[Theorems 6.2, 6.4 and 6.6]{BerBouckJonYTD} and \cite[Lemma 4.3]{BouckJohn21}}]\label{thm_lin_grwth_e1}
 		For any psh ray of linear growth $U : ]0, + \infty[ \to \mathcal{E}_{\omega}^1$, the function $U_{NA} : X^{div} \to \real$ extends uniquely to $U_{NA} \in PSH^{NA}(X)$, and we, moreover, have $U_{NA} \in \mathcal{E}^{1, NA}$. 
 		Similarly, for any bounded submultiplicative filtration $\mathcal{F}$ on $R(X, L)$, we have $FS(\mathcal{F}) \in \mathcal{E}^{1, NA}$. 
 		For any $u \in \mathcal{E}_{\omega}^1$ and $\phi \in \mathcal{E}^{1, NA}$, there exists a unique maximal geodesic ray $U : [0, + \infty[ \to \mathcal{E}_{\omega}^1$ emanating from $u$ such that $U_{NA} = \phi$.
 	\end{thm}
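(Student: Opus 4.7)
The plan is to address the three assertions in order, since the first two set up the objects appearing in the third.

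For the first assertion, given a psh ray $U : ]0,+\infty[ \to \mathcal{E}_{\omega}^1$ of linear growth with slope $a$, the rotationally invariant function $V(x,\tau) := \hat{U}(x,\tau) + a\log|\tau|$ is $\pi^*\omega$-psh and bounded above near $X \times \{0\}$, hence extends uniquely to a $\pi^*\omega$-psh function on $X \times \mathbb{D}$. The function $U_{NA}$ on $X^{div}$ is encoded by generic Lelong numbers of $V$ through the Gauss extension (\ref{eq_sigm_Gauss}). To upgrade $U_{NA}$ to an element of $PSH^{NA}(X)$, I would apply Demailly's analytic regularization to $V$, producing a decreasing sequence of quasi-psh functions with analytic singularities whose generic Lelong numbers translate, via (\ref{eq_nm_s_defn}), into Fubini-Study functions on $X^{an}$ converging pointwise to $U_{NA}$. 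This identifies $U_{NA}$ as the canonical extension and forces $U_{NA} \in PSH^{NA}(X)$. Membership in $\mathcal{E}^{1,NA}$ then follows from the finiteness of the slope of $t \mapsto E(U_t)$ along the geodesic, which is controlled by (\ref{eq_d1_ener}) and the linear growth assumption, combined with the identification of this slope with $E^{NA}(U_{NA})$ through the Monge-Ampère pairing across the central fiber.

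For the second assertion, the boundedness of $\mathcal{F}$ yields $\chi_{\mathcal{F},k} \geq \exp(-Ck)$ for some $C > 0$, and submultiplicativity (\ref{eq_na_submult_n}) forces the sequence $FS(\mathcal{F})_k$ to be superadditive. Each $FS(\mathcal{F})_k$ from (\ref{eq_fs_fin_ord}) is the supremum of $\log|s| - \log\chi_{\mathcal{F},k}(s)$ over a finite-dimensional space of sections, and reduces, after selecting a basis adapted to the filtration, to a maximum of finitely many $\log|s_j|+\lambda_j$; this is precisely a Fubini-Study function on $X^{an}$. Consequently $FS(\mathcal{F}) = \sup_{k} \frac{1}{k}FS(\mathcal{F})_k$ is a supremum of Fubini-Study functions uniformly bounded above by boundedness of $\mathcal{F}$, and hence lies in $PSH^{NA}(X)$. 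The uniform upper bound places $FS(\mathcal{F}) \in \mathcal{E}^{1,NA}$ by monotonicity of the non-Archimedean energy.

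The third assertion is the main obstacle, and I would handle it by the envelope construction that mirrors (\ref{eq_geod_as_env}) in the hybrid complex/non-Archimedean setting. Define $U$ as the upper semicontinuous regularization of the supremum of all psh rays $V$ of linear growth with $\lim_{t \to 0^+} V_t \leq u$ and $V_{NA} \leq \phi$. Nonemptiness of this class is the crux: for $\phi$ of the special form $FS(\mathcal{F})$ with $\mathcal{F}$ a bounded submultiplicative filtration, the Fubini-Study geodesic ray from Theorem \ref{thm_ray_geod_bergm} provides a candidate, with non-Archimedean limit matched via (\ref{eq_na_nm_interpol}); for general $\phi \in \mathcal{E}^{1,NA}$, I would approximate $\phi$ by such Fubini-Study potentials and pass to the limit using $d_1$-completeness of $\mathcal{E}_{\omega}^1$. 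Geodesicity of the envelope is enforced by a Perron-type argument, reducing to solvability of the homogeneous Monge-Ampère equation (\ref{eq_ma_geod}). The most delicate point is verifying the equality $U_{NA} = \phi$ rather than the a priori inequality $U_{NA} \leq \phi$; this requires an energy-matching argument, comparing the complex $d_1$-slope of $U$ against the non-Archimedean energy $E^{NA}(\phi)$, both sides being accessible via quantization (Theorems \ref{thm_char2}, \ref{thm_darvlurub_quant}) applied to approximating filtrations. Uniqueness of the maximal ray is then immediate, since any two rays with the same initial point and non-Archimedean limit each dominate the other by the definition of maximality.
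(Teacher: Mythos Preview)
This theorem carries no proof in the paper: it is stated as a quotation of external results (Theorems 6.2, 6.4, 6.6 of \cite{BerBouckJonYTD} and Lemma 4.3 of \cite{BouckJohn21}), and the paper proceeds immediately to use it as a black box in order to define the maximal geodesic ray. There is therefore nothing in the paper to compare your sketch against.

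That said, a couple of remarks on your sketch are in order. In the third assertion you propose to exhibit candidate psh rays for the envelope by invoking Theorem~\ref{thm_ray_geod_bergm} of \emph{this} paper. Logically this is backwards: Theorem~\ref{thm_lin_grwth_e1} is imported as prior input, and the paper's own contribution (Theorem~\ref{thm_ray_coinc}) is precisely to identify the Fubini-Study geodesic ray with the maximal one whose existence Theorem~\ref{thm_lin_grwth_e1} guarantees. In the cited references the candidate rays instead come from test configurations and their associated Monge-Amp\`ere geodesic rays (as in Phong-Sturm), not from the submultiplicative-norm construction of this paper. Your energy-matching idea for upgrading $U_{NA}\le\phi$ to equality is along the right lines, but the actual argument in \cite{BerBouckJonYTD} hinges on the characterisation (\ref{eq_max_psh_ray_mon1022}), namely $E(U_t)=E(U_0)+tE(U_{NA})$, together with approximation by test configurations; invoking Theorems~\ref{thm_char2} and~\ref{thm_darvlurub_quant} here would again import machinery that postdates the cited result.
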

	From Theorems \ref{thm_lin_grwth_e1}, we see, in particular, that for any bounded psh metric $h^L_0$ and any bounded submultiplicative filtration $\mathcal{F}$, there is the \textit{maximal geodesic ray} $h^{L, \max}_{\mathcal{F}, t}$, emanating from $h^L_0$, corresponding on the potential level, see (\ref{eq_pot_to_metr}), to the ray $U$, verifying $U_{NA} = FS(\mathcal{F})$.
	\par 
	We can now state the main result of this section.
	\begin{thm}\label{thm_ray_coinc}
		For any bounded submultiplicative filtration $\mathcal{F}$ on a section ring $R(X, L)$ of an ample line bundle $L$, the Fubini-Study geodesic ray, $h^{L, \mathcal{B}}_{\mathcal{F}, t}$, $t \geq 0$, emanating from a fixed regularizable from above psh metric on $L$ coincides with the respective maximal geodesic ray, $h^{L, \max}_{\mathcal{F}, t}$.
	\end{thm}
	\begin{rem}\label{rem_ray_coinc}
		a)
		The fact that $h^{L, \mathcal{B}}_{\mathcal{F}, t}$ is maximal was previously established by Darvas-Xia \cite{DarvXiaTest} by relying on Ross-Witt Nystr{\"o}m \cite{RossNystAnalTConf}.
		\par 
		b) 
		As we shall explain below, for finitely generated filtrations, Theorem \ref{thm_ray_coinc} was established by Phong-Sturm \cite{PhongSturmDirMA}, Berman-Boucksom-Jonsson \cite{BerBouckJonYTD} and Boucksom-Jonsson \cite{BouckJohn21}.
	\end{rem}
	\par 
	To prove Theorem \ref{thm_ray_coinc}, we rely on the works of Phong-Sturm \cite{PhongSturmDirMA}, Berman-Boucksom-Jonsson \cite{BerBouckJonYTD} and Boucksom-Jonsson \cite{BouckJohn21}, which establish Theorem \ref{thm_ray_coinc} for $\mathcal{F}$ induced by ample test configurations, and on the fact, remarked by Sz{\'e}kelyhidi \cite{SzekeTestConf}, that any filtration can be approximated by filtrations induced by ample test configurations.
	It remains to establish that both Fubini-Study geodesic rays and maximal geodesic rays behave reasonably under these approximations.
	For Fubini-Study geodesic rays, the corresponding statement is Theorem \ref{thm_can_trunc_bergm}, and it follows from Theorem \ref{thm_char2}.
	For maximal geodesic rays, the corresponding statement is Theorem \ref{thm_max_can_appr}, and it follows from several results from complex and non-Archimedean pluripotential theory developed by Berman-Boucksom-Jonsson \cite{BerBouckJonYTD} and Boucksom-Jonsson \cite{BouckJohn21}.
	We then compare Theorem \ref{thm_ray_coinc} with the maximality result following from the works of Ross-Witt Nystr{\"o}m \cite{RossNystom}, \cite{RossNystAnalTConf}.
	\par
	Define now, following Sz{\'e}kelyhidi \cite{SzekeTestConf}, for any graded filtration $\mathcal{F}$ on $R(X, L)$ the sequence of \textit{canonical approximations} $\mathcal{F}_{(k)}$ of $\mathcal{F}$, as the filtrations induced by $\mathcal{F} H^0(X, L^{\otimes k})$ on $R(X, L^{\otimes k})$, for $k \in \nat^*$ big enough so that $H^0(X, L^{\otimes k})$ generates the algebra $R(X, L^{\otimes k})$.
	Taking into account the identification of submultiplicative filtrations and submultiplicative non-Archimedean norms, see (\ref{eq_filtr_norm}), remark that  $\mathcal{F}_{(k)}$ are the formal analogues of the norm $N^{\pi}$ from Theorem \ref{thm_induc}. 
	\par 
	The proof of Theorem \ref{thm_ray_coinc} decomposes into several statements.
	The first step is to establish that Fubini-Study geodesic rays behave reasonably under the above approximations.
	\begin{thm}\label{thm_can_trunc_bergm}
		For any bounded submultiplicative filtration $\mathcal{F}$ on a section ring $R(X, L)$ of an ample line bundle $L$ and any regularizable from above psh metric $h^L_0$, Fubini-Study geodesic rays emanating from $h^L_0$ behave continuously in the topology of $\mathcal{E}_{\omega}^1$ with respect to canonical approximations $\mathcal{F}_{(k)}$ of $\mathcal{F}$.
		In other words, in $\mathcal{E}_{\omega}^1$, we have
		\begin{equation}
			\lim_{k \to \infty} \big( h^{L^{\otimes k}, FS}_{\mathcal{F}_{(k)}, t} \big)^{\frac{1}{k}} = h^{L, FS}_{\mathcal{F}, t},
		\end{equation}
		where the limit is taken over multiplicative sequence $k \in \nat^*$, as for example $k = 2^l$, $l \in \nat$.
	\end{thm}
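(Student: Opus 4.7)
The plan is a sandwich argument. For each $t \geq 0$ and each sufficiently large $k$, I aim to bound $(h^{L^k, FS}_{\mathcal{F}_{(k)}, t})^{1/k}$ between $h^{L, FS}_{\mathcal{F}, t}$ and $FS((N^t_{\mathcal{F}})_k)^{1/k}$. The upper bound converges to $h^{L, FS}_{\mathcal{F}, t}$ monotonically by Lemma \ref{lem_fs_dini}, hence in $\mathcal{E}_\omega^1$ by Darvas's criterion \eqref{eq_darvas_conv}. The two sandwich bounds come from simple compatibility properties of the canonical approximation: $\mathcal{F}_{(k)}$ is a sub-filtration of $\mathcal{F}$ in every degree, and it coincides with $\mathcal{F}$ at degree $1$ of $R(X, L^k)$.

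For the lower bound, submultiplicativity of $\mathcal{F}$ yields $\mathcal{F}_{(k)}^\lambda H^0(X, L^{km}) \subset \mathcal{F}^\lambda H^0(X, L^{km})$ for every $m \in \nat^*$ and $\lambda \in \real$, since products of sections in $\mathcal{F}^{\lambda_i} H^0(X, L^k)$ lie in $\mathcal{F}^{\sum \lambda_i} H^0(X, L^{km})$. As the infimum in \eqref{eq_ray_norm_defn0} is taken over fewer admissible decompositions for a smaller filtration, this gives $(N^t_{\mathcal{F}_{(k)}})_m \geq (N^t_{\mathcal{F}})_{km}$ as norms on $H^0(X, L^{km})$. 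The Fubini-Study operator is order-preserving by Lemma \ref{lem_fs_inf_d}, so $FS((N^t_{\mathcal{F}_{(k)}})_m)^{1/(km)} \geq FS((N^t_{\mathcal{F}})_{km})^{1/(km)}$ as metrics on $L$. Taking the infimum over $m$ and using Lemma \ref{lem_fs_dini} together with the fact that the infimum of a decreasing sequence over any cofinal subset agrees with the full infimum yields the pointwise bound $(h^{L^k, FS}_{\mathcal{F}_{(k)}, t})^{1/k} \geq h^{L, FS}_{\mathcal{F}, t}$. For the upper bound, the canonical approximation is untouched at the first level: $(N^t_{\mathcal{F}_{(k)}})_1 = (N^t_{\mathcal{F}})_k$ on $H^0(X, L^k)$. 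Bounding the Fubini-Study infimum of Lemma \ref{lem_fs_dini} by its $m = 1$ term gives $h^{L^k, FS}_{\mathcal{F}_{(k)}, t} \leq FS((N^t_{\mathcal{F}})_k)$ on $L^k$, hence $(h^{L^k, FS}_{\mathcal{F}_{(k)}, t})^{1/k} \leq FS((N^t_{\mathcal{F}})_k)^{1/k}$ on $L$.

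To conclude, from the sandwich $h^{L, FS}_{\mathcal{F}, t} \leq (h^{L^k, FS}_{\mathcal{F}_{(k)}, t})^{1/k} \leq FS((N^t_{\mathcal{F}})_k)^{1/k}$, the identity $d_1(u, v) = E(v) - E(u)$ for ordered $u \leq v$ in $\mathcal{E}_\omega^1$ from \eqref{eq_d1_ener} and the monotonicity of the energy functional $E$ give
\[
d_1\bigl((h^{L^k, FS}_{\mathcal{F}_{(k)}, t})^{1/k}, h^{L, FS}_{\mathcal{F}, t}\bigr) \leq d_1\bigl(FS((N^t_{\mathcal{F}})_k)^{1/k}, h^{L, FS}_{\mathcal{F}, t}\bigr) \to 0.
\]
The only subtlety is to verify that all metrics involved lie in $\mathcal{E}_\omega^1$; this follows from the boundedness of $\mathcal{F}$, which keeps all metrics uniformly bounded, placing them in $\mathrm{PSH}(X, \omega) \cap \mathcal{L}^\infty(X) \subset \mathcal{E}_\omega^1$ by \eqref{eq_inter_ep}. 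The approach does not invoke Theorem \ref{thm_char2} directly, although the underlying structural identities between submultiplicative norms and their Fubini-Study envelopes, which are the heart of Theorems \ref{thm_char}--\ref{thm_char2}, are exactly what enables the equality in degree $1$ and the monotone convergence of the upper bound to be exploited so cleanly.
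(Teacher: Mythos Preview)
Your argument is correct, and its core is the same sandwich inequality the paper uses: $FS(N^t_{\mathcal{F}})^{k} \leq FS(N^t_{\mathcal{F}_{(k)}}) \leq FS(N^t_{\mathcal{F}, k})$, which the paper records just before \eqref{eq_vol_cont_00}. The difference is packaging. The paper first translates the metric distance into a distance of graded norms via \eqref{eq_cont_bergm_1} (which invokes Corollary~\ref{cor_d_p_norm_fs_rel}, hence Theorem~\ref{thm_char2}), then reduces to volume continuity (Theorem~\ref{thm_vol_cont_dep}) through \eqref{eq_cont_bergm_2}--\eqref{eq_cont_bergm_3}; the sandwich appears only inside the alternative proof of Theorem~\ref{thm_vol_cont_dep}. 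You instead apply the sandwich directly at the metric level and close with \eqref{eq_d1_ener} and \eqref{eq_darvas_conv}, bypassing both the volume detour and the appeal to Theorem~\ref{thm_char2}. This is a genuine streamlining: your route needs only Lemma~\ref{lem_fs_dini}, Darvas's monotone convergence criterion, and the energy formula for $d_1$. One small remark: for the lower bound you do not need the ``cofinal subset'' observation --- the trivial inequality $\inf_{m} FS((N^t_{\mathcal{F}})_{km})^{1/(km)} \geq \inf_{l} FS((N^t_{\mathcal{F}})_l)^{1/l}$ already suffices, and the phrasing you give is a bit imprecise (though the conclusion is correct via Fekete's lemma, since the limit over any subsequence equals the full limit).
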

	The proof of Theorem \ref{thm_can_trunc_bergm} is based on Theorem \ref{thm_char2} and the study of the volume functional for canonical approximations.
	Recall that the volume of a bounded submultiplicative filtration $\mathcal{F}$ is defined as follows
	\begin{equation}\label{eq_vol_filtr_defn}
		{\rm{vol}}(\mathcal{F}) := \lim_{k \to \infty} \int x \mu_{\mathcal{F}, k},
	\end{equation}
	where $\mu_{\mathcal{F}, k}$, $k \in \nat$ are jumping measures (\ref{eq_jump_meas_d}) of the filtration.
	The existence of the aforementioned limit is a consequence of \cite{ChenHNolyg}, \cite{BouckChen} or of Theorem \ref{thm_filt}.
	\begin{thm}[{Boucksom-Jonsson \cite[Theorem 3.18 and (3.14)]{BouckJohn21}}]\label{thm_vol_cont_dep}
		Volumes depend continuously under canonical approximations, i.e. for any bounded submultiplicative filtration $\mathcal{F}$, we have
		\begin{equation}\label{eq_vol_filtr_defn101}
			\lim_{k \to \infty} {\rm{vol}}(\mathcal{F}_{(k)}) = {\rm{vol}}(\mathcal{F}),
		\end{equation}
		where the limit is taken over multiplicative sequence $k \in \nat^*$, as for example $k = 2^l$, $l \in \nat$.
	\end{thm}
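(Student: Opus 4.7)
The plan is to establish the two one-sided inequalities $\limsup_k \mathrm{vol}(\mathcal{F}_{(k)}) \leq \mathrm{vol}(\mathcal{F})$ and $\liminf_k \mathrm{vol}(\mathcal{F}_{(k)}) \geq \mathrm{vol}(\mathcal{F})$ separately, with $k$ running over a multiplicative sequence. The upper bound is an immediate consequence of submultiplicativity of $\mathcal{F}$: if $s \in H^0(X, L^{kl})$ admits a decomposition $s = \sum_\alpha s_1^{(\alpha)} \cdots s_l^{(\alpha)}$ with factors in $H^0(X, L^k)$, iterating the submultiplicativity of $\mathcal{F}$ yields $w_{\mathcal{F}}(s) \geq \min_\alpha \sum_i w_{\mathcal{F}}(s_i^{(\alpha)})$. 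Taking the supremum over such decompositions gives $w_{\mathcal{F}_{(k)}} \leq w_{\mathcal{F}}$ on $H^0(X, L^{kl})$, whence $e_{\mathcal{F}_{(k)}}(j, l) \leq e_{\mathcal{F}}(j, kl)$ for all $j, l$. Averaging in $j$ and passing to the limit $l \to \infty$ yields $\mathrm{vol}(\mathcal{F}_{(k)}) \leq \mathrm{vol}(\mathcal{F})$ with the natural rescaling.

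For the reverse inequality, I would fix $k$ large enough that the multiplication map $\sym^l H^0(X, L^k) \to H^0(X, L^{kl})$ is surjective for every $l$, and pick a basis $s_1, \ldots, s_{N_k}$ of $H^0(X, L^k)$ adapted to $\mathcal{F}$, with weights $\lambda_i := w_{\mathcal{F}}(s_i)$. The monomials $s^\alpha$, $|\alpha| = l$, then span $H^0(X, L^{kl})$ and satisfy $w_{\mathcal{F}_{(k)}}(s^\alpha) \geq \langle \alpha, \lambda \rangle$. The task is to extract from these monomials a basis of $H^0(X, L^{kl})$ with near-maximal total weight. The cleanest framework is the Newton--Okounkov body $\Delta(L) \subset \real^n$: a flag on $X$ induces an injective valuation $\nu$ on each $H^0(X, L^k)$ whose rescaled image $\frac{1}{k}\nu(H^0(X, L^k)\setminus\{0\})$ equidistributes in $\Delta(L)$ as $k \to \infty$, and the $\mathcal{F}$-weights $(\lambda_i)_i$ are encoded by a concave transform $G_{\mathcal{F}} : \Delta(L) \to \real$ in the sense of Boucksom--Chen \cite{BouckChen}. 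Under $\nu$, monomials $s^\alpha$ at level $l$ correspond to lattice points in the $l$-fold Minkowski sum of $\nu(H^0(X,L^k))$, and a convex-geometric count produces a lower bound $\mathrm{vol}(\mathcal{F}_{(k)}) \geq \int_{\Delta(L)} G_{\mathcal{F}, k} \, d\mathrm{Leb}$ for a piecewise-linear interpolant $G_{\mathcal{F}, k}$ of $G_{\mathcal{F}}$ sampled at the level-$k$ valuation image. As $k \to \infty$, the concavity of $G_{\mathcal{F}}$ together with equidistribution gives $G_{\mathcal{F}, k} \to G_{\mathcal{F}}$ in $L^1(\Delta(L))$, yielding the reverse inequality.

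The main obstacle is the quantitative equidistribution of the level-$k$ valuation images in $\Delta(L)$ combined with the $L^1$-convergence $G_{\mathcal{F}, k} \to G_{\mathcal{F}}$; both rest on the structure theory of Okounkov bodies (Kaveh--Khovanskii, Lazarsfeld--Musta\c{t}\u{a}) extended to the filtered setting by Boucksom--Chen. An alternative route, more in the spirit of the present article, is to work on the Berkovich space $X^{an}$: since $FS(\mathcal{F}_{(k)})$ is sandwiched between $k^{-1} FS(\mathcal{F})_k$ (lower bound, coming from the identity $\chi_{\mathcal{F}_{(k)}, 1} = \chi_{\mathcal{F}, k}$ at the base level of the canonical approximation) and $FS(\mathcal{F})$ (upper bound, from the inclusion of filtrations), one deduces $FS(\mathcal{F}_{(k)}) \to FS(\mathcal{F})$ pointwise on $X^{an}$ by Fekete's lemma applied to (\ref{eq_na_fs_defn}); the volume convergence then follows from the continuity of the non-Archimedean Monge--Amp\`ere energy functional under monotone convergence in $\mathcal{E}^{1, NA}$, within the framework underlying Theorem~\ref{thm_lin_grwth_e1}.
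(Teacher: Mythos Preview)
Your proposal is correct, but both routes you sketch differ from the paper's own argument. Your first route via Okounkov bodies and the concave transform $G_{\mathcal{F}}$ is essentially the original Boucksom--Chen/Boucksom--Jonsson proof; your second route via monotone convergence of $FS(\mathcal{F}_{(k)})$ to $FS(\mathcal{F})$ on $X^{an}$ and continuity of the non-Archimedean energy is again in the spirit of \cite{BouckJohn21}. The paper explicitly states that it is giving an \emph{alternative} proof avoiding Okounkov bodies, relying instead on Theorem~\ref{thm_char2}.

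The paper's argument stays entirely on the Archimedean side. After normalising so that $\mathcal{F}^0 R(X,L)=\{0\}$, it passes through the ray of submultiplicative norms $N^t_{\mathcal{F}}$ from (\ref{eq_ray_norm_defn0}). Using the ordering $N^t_{\mathcal{F}_{(k)}} \geq N^t_{\mathcal{F}}|_{R(X,L^k)} \geq N|_{R(X,L^k)}$ together with Lemmas~\ref{lem_log_spec_ray} and~\ref{lem_d_1_ident_metr_gr}, one obtains
\[
\tfrac{1}{k}\, d_1\big(N^t_{\mathcal{F}}|_{R(X,L^k)},\, N^t_{\mathcal{F}_{(k)}}\big)
= t\big(\mathrm{vol}(\mathcal{F}) - \mathrm{vol}(\mathcal{F}_{(k)})\big).
\]
The left-hand side is then converted, via Corollary~\ref{cor_d_p_norm_fs_rel} (which rests on Theorem~\ref{thm_char2}), into a $d_1$-distance between the complex Fubini--Study potentials, and bounded above by $d_1\big(FS(N^t_{\mathcal{F}})_*,\, FS(N^t_{\mathcal{F},k})_*^{1/k}\big)$. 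This last quantity tends to zero by Darvas's monotone convergence (\ref{eq_darvas_conv}), since $FS(N^t_{\mathcal{F},k})^{1/k}$ decreases to $FS(N^t_{\mathcal{F}})$ along a multiplicative sequence.

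What each approach buys: your routes are shorter given the external machinery, but they import either Okounkov-body combinatorics or non-Archimedean energy continuity. The paper's route is internal to its own framework and, crucially, the very same chain of equalities immediately yields Theorem~\ref{thm_can_trunc_bergm} (continuity of Fubini--Study geodesic rays under canonical approximation), which is the real goal of this subsection.
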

	Let us give an alternative proof of Theorem \ref{thm_vol_cont_dep}, relying solely on Theorem \ref{thm_char2}.
	As we shall see, Theorem \ref{thm_can_trunc_bergm} would follow rather easily from this new proof of Theorem \ref{thm_vol_cont_dep}.
	\begin{proof}
		First of all, from considerations, similar to the ones from the proof of Theorem \ref{thm_filt}, we can assume that the filtration $\mathcal{F}$ satisfies the additional assumption (\ref{eq_filt_bnd_zero}).
		\par 
		From Corollary \ref{cor_d_p_norm_fs_rel}, we see that for any $k \in \nat^*$, $t \in [0, +\infty[$, we have
		\begin{equation}\label{eq_cont_bergm_1}
			d_1 \big( h^{L, FS}_{\mathcal{F}, t}, (h^{L^{\otimes k}, FS}_{\mathcal{F}_{(k)}, t})^{\frac{1}{k}} \big)
			=
			\frac{1}{k}
			d_1 \big( N^t_{\mathcal{F}}|_{R(X, L^{\otimes k})}, N^t_{\mathcal{F}_{(k)}} \big).
		\end{equation}
		Remark, however, that since $\mathcal{F}$ satisfies (\ref{eq_filt_bnd_zero}), and since the weight of any element of $R(X, L^{\otimes k})$ with respect to $\mathcal{F}$ is at least as big as the weight with respect to $\mathcal{F}_{(k)}$, we have
		\begin{equation}\label{eq_ord_n_norms}
			N^t_{\mathcal{F}_{(k)}} \geq N^t_{\mathcal{F}}|_{R(X, L^{\otimes k})} \geq N|_{R(X, L^{\otimes k})}.
		\end{equation}
		In particular, by Lemma \ref{lem_d_1_ident_metr_gr}, we conclude that 
		\begin{equation}\label{eq_cont_bergm_2}
			d_1 \big( N^t_{\mathcal{F}}|_{R(X, L^{\otimes k})}, N^t_{\mathcal{F}_{(k)}} \big)
			=
			d_1 \big( N|_{R(X, L^{\otimes k})}, N^t_{\mathcal{F}_{(k)}} \big)
			-
			d_1 \big( N|_{R(X, L^{\otimes k})}, N^t_{\mathcal{F}}|_{R(X, L^{\otimes k})} \big).
		\end{equation}
		However, by Lemma \ref{lem_log_spec_ray} and the fact that $\mathcal{F}$ satisfies (\ref{eq_filt_bnd_zero}), we have
		\begin{equation}\label{eq_cont_bergm_3}
		\begin{aligned}
			&
			d_1 \big( N|_{R(X, L^{\otimes k})}, N^t_{\mathcal{F}_{(k)}} \big) = - t \cdot k \cdot {\rm{vol}}(\mathcal{F}_{(k)}),
			\\
			&
			d_1 \big( N|_{R(X, L^{\otimes k})}, N^t_{\mathcal{F}}|_{R(X, L^{\otimes k})} \big) = - t \cdot k \cdot {\rm{vol}}(\mathcal{F}).
		\end{aligned}
		\end{equation}
		\par 
		By Lemma \ref{lem_fs_dini}, the trivial fact that $N^t_{\mathcal{F}_{(k)}, 1} = N^t_{\mathcal{F}, k}$ and (\ref{eq_ord_n_norms}), we deduce
		\begin{equation}
			FS(N^t_{\mathcal{F}})^k \leq FS(N^t_{\mathcal{F}_{(k)}}) \leq FS(N^t_{\mathcal{F}, k}).
		\end{equation}
		From this, (\ref{eq_log_rel_spec2}) and (\ref{eq_cont_bergm_1}), we conclude that 
		\begin{equation}\label{eq_vol_cont_00}
			\frac{1}{k}
			d_1 \big( N^t_{\mathcal{F}}|_{R(X, L^{\otimes k})}, N^t_{\mathcal{F}_{(k)}} \big)
			\leq
			d_1 \big( FS(N^t_{\mathcal{F}})_*, FS(N^t_{\mathcal{F}, k})_*^{\frac{1}{k}} \big).
		\end{equation}
		From Lemma \ref{lem_fs_dini}, (\ref{eq_darvas_conv}), (\ref{eq_vol_cont_00}) and the fact that the sequence of metrics $FS(N^t_{\mathcal{F}, k})^{\frac{1}{k}}$ is decreasing over multiplicative sequence $k \in \nat^*$, we deduce the following convergence
		\begin{equation}\label{eq_vol_cont_11}
			\lim_{k \to \infty} \frac{1}{k}
			d_1 \big( N^t_{\mathcal{F}}|_{R(X, L^{\otimes k})}, N^t_{\mathcal{F}_{(k)}} \big)
			=
			0,
		\end{equation}		 
		where $k$ runs over a multiplicative sequence.
		We deduce (\ref{eq_vol_filtr_defn101}) from (\ref{eq_cont_bergm_2}), (\ref{eq_cont_bergm_3}) and (\ref{eq_vol_cont_11}).
	\end{proof}
	\begin{proof}[Proof of Theorem \ref{thm_can_trunc_bergm}.]
		It follows directly from Theorem \ref{thm_vol_cont_dep}, (\ref{eq_darvas_conv}), (\ref{eq_cont_bergm_1}), (\ref{eq_cont_bergm_2}) and (\ref{eq_cont_bergm_3}).
	\end{proof}
	The second step of the proof of Theorem \ref{thm_ray_coinc} consists in the following result.
	\begin{thm}\label{thm_max_can_appr}
		An analogue of Theorem \ref{thm_can_trunc_bergm} holds for maximal geodesic rays emanating from bounded psh metrics.
		In other words, under the assumptions of Theorem \ref{thm_can_trunc_bergm}, in $\mathcal{E}_{\omega}^1$, we have
		\begin{equation}
			\lim_{k \to \infty} \big( h^{L^{\otimes k}, \max}_{\mathcal{F}_{(k)}, t} \big)^{\frac{1}{k}} = h^{L, \max}_{\mathcal{F}, t},
		\end{equation}
		where the limit is taken over multiplicative sequence $k \in \nat^*$, as for example $k = 2^l$, $l \in \nat$.
	\end{thm}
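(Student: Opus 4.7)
The plan is to reduce the convergence of maximal geodesic rays to the convergence of their non-Archimedean data, which is already provided by Theorem \ref{thm_vol_cont_dep}. By Theorem \ref{thm_lin_grwth_e1}, the maximal geodesic ray emanating from a fixed bounded psh metric $h^L_0$ is uniquely determined by its non-Archimedean endpoint. Setting $\phi := FS(\mathcal{F}) \in \mathcal{E}^{1, NA}$ and $\phi_k := \frac{1}{k} FS(\mathcal{F}_{(k)}) \in \mathcal{E}^{1, NA}$ (the $\frac{1}{k}$ accounts for the passage between $L^k$- and $L$-scales), the rays $h^{L,\max}_{\mathcal{F},t}$ and $\big(h^{L^k,\max}_{\mathcal{F}_{(k)},t}\big)^{1/k}$ are precisely the maximal geodesic rays emanating from $h^L_0$ with non-Archimedean data $\phi$ and $\phi_k$ respectively. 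Thus the problem reduces to showing that $\phi_k \to \phi$ strongly in $\mathcal{E}^{1, NA}$, and then invoking continuity of the BBJ correspondence between strong limits in $\mathcal{E}^{1, NA}$ and $d_1$-limits of maximal geodesic rays in $\mathcal{E}^1_\omega$.

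For the pointwise part of $\phi_k \to \phi$, I would rely on Fekete's lemma applied to the superadditive sequence of potentials $FS(\mathcal{F})_k$ (superadditivity follows from submultiplicativity of $\mathcal{F}$, exactly as in the discussion preceding (\ref{eq_na_fs_defn})): along a multiplicative subsequence this sequence increases pointwise to $\phi$. Since $\phi_k$ is pinched between $\frac{1}{k} FS(\mathcal{F})_k$ and $\phi$ by the very construction of canonical approximation (every element of $H^0(X, L^k)$ realizes its $\mathcal{F}$-weight in $\mathcal{F}_{(k)}$, and products generate the rest while never improving the weight), the same monotone pointwise convergence holds for $\phi_k$.

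For the energy part of the strong convergence, I would invoke Theorem \ref{thm_vol_cont_dep} combined with the Boucksom-Jonsson identity ${\rm vol}(\mathcal{F}) = (n+1) \cdot V \cdot E(FS(\mathcal{F}))$ (see \cite[(3.14)]{BouckJohn21}), which is the non-Archimedean analogue of (\ref{eq_d1_ener}). This identity converts volume convergence into energy convergence $E(\phi_k) \to E(\phi)$, and together with monotone pointwise convergence gives strong convergence in $\mathcal{E}^{1, NA}$. The final step is then to apply the continuity of the maximal geodesic ray assignment from \cite[Theorem 6.6]{BerBouckJonYTD} (and its underlying envelope construction): if $\phi_k \to \phi$ strongly in $\mathcal{E}^{1, NA}$ with common initial data $h^L_0 \in \mathcal{E}^1_\omega$, then the corresponding maximal geodesic rays satisfy $d_1\big(U^{(k)}_t, U_t\big) \to 0$ for every fixed $t$, which is exactly the content of Theorem \ref{thm_max_can_appr}.

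The main obstacle is verifying the precise continuity statement for the BBJ correspondence in our form, namely that strong convergence of non-Archimedean endpoints implies $d_1$-convergence of the associated maximal geodesic rays at each time $t \geq 0$. This is essentially a compactness argument: upper bound comes from the definition of maximality of the limiting ray applied to the $\limsup$ of the $U^{(k)}$'s (restricted to $[0, T]$ and then rescaled back), while the lower bound comes from testing the $\liminf$'s maximality property against $U$ using monotone convergence of $\phi_k$. The secondary subtlety is in the normalization conventions relating geodesic parameter $t$ on $L^k$ versus $L$, which is handled by the natural rescaling $\phi_k = \frac{1}{k} FS(\mathcal{F}_{(k)})$ compatible with taking $k$-th roots of metrics.
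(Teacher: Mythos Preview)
Your overall strategy is correct and matches the paper's: reduce to strong convergence of the non-Archimedean data $\phi_k = \frac{1}{k}FS(\mathcal{F}_{(k)}) \nearrow \phi = FS(\mathcal{F})$ together with $E(\phi_k)\to E(\phi)$, which you correctly extract from Theorem~\ref{thm_vol_cont_dep} via the Boucksom--Jonsson identity between volume and non-Archimedean energy.

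Where you diverge is in the final step. You flag as ``the main obstacle'' a general continuity statement for the BBJ correspondence $\phi \mapsto U^{\phi}$, and sketch a limsup/liminf compactness argument to prove it. This detour is unnecessary, and the compactness sketch as written is incomplete (you would still need to check that the limsup of the $U^{(k)}$ is a psh ray of linear growth with the correct $U_{NA}$, which is not immediate). The paper bypasses this entirely with two elementary facts. First, monotonicity of maximal rays in the data (if $\phi_0\le\phi_1$ and $u_0\le u_1$ then $U^0_t\le U^1_t$), applied to $\phi_k\le\phi$, gives the ordering $\big(h^{L^k,\max}_{\mathcal{F}_{(k)},t}\big)^{1/k} \le h^{L,\max}_{\mathcal{F},t}$. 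Second, the explicit energy formula for maximal rays from \cite[Corollary~6.7]{BerBouckJonYTD},
\[
E(U_t) = E(U_0) + t\,E(U_{NA}),
\]
turns $E(\phi_k)\to E(\phi)$ directly into $E(U^{(k)}_t)\to E(U_t)$ at each fixed $t$. Since the rays are ordered, (\ref{eq_d1_ener}) gives $d_1\big(U^{(k)}_t,U_t\big) = E(U_t)-E(U^{(k)}_t)\to 0$, and (\ref{eq_darvas_conv}) finishes. So your proof is right in spirit, but replace the abstract continuity invocation by this two-line argument and the obstacle disappears.
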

	\begin{proof}
		First, remark that maximal geodesic rays are monotonic with respect to the data. 
 	In other words, for $u_0, u_1 \in \mathcal{E}_{\omega}^1$ and $\phi_0, \phi_1 \in \mathcal{E}^{1, NA}$, verifying $u_0 \leq u_1$ and $\phi_0 \leq \phi_1$, we have
	 	\begin{equation}\label{eq_max_psh_ray_mon}
	 		U^0_t \leq U^1_t,
	 	\end{equation}
	 	for any $t \in [0, + \infty[$, where $U^i : [0, + \infty[ \to \mathcal{E}_{\omega}^1$ is the maximal geodesic ray emanating from $u_i$ such that $U^i_{NA} = \phi_i$ for $i = 0, 1$.
	 	\par  
 		Boucksom-Jonsson in \cite[Theorem 5.4 and Lemma 6.17iii) and \S 3.6]{BouckJohn21} established that for any bounded submultiplicative filtration $\mathcal{F}$, for any $k \in \nat^*$, we have
		\begin{equation}\label{eq_thm_e_fs_filt}
			\frac{1}{k}
			FS(\mathcal{F}_{(k)}) \leq FS(\mathcal{F}),
			\qquad
			\lim_{k \to \infty} \frac{1}{k} E( FS(\mathcal{F}_{(k)})) = E(FS(\mathcal{F})).
		\end{equation}
		Clearly, by (\ref{eq_max_psh_ray_mon}) and monotonicity from (\ref{eq_thm_e_fs_filt}), we obtain that for any $t \in [0, +\infty[$, we have
		\begin{equation}
			\frac{1}{k} U^{\mathcal{F}_{(k)}, u}_{t} 
			\leq 
			U^{\mathcal{F}, u}_t.
		\end{equation}
		Since over multiplicative sequence $k \in \nat^*$, the sequence $\frac{1}{k} FS(\mathcal{F}_{(k)})$ increases to $FS(\mathcal{F})$, by (\ref{eq_darvas_conv}) and (\ref{eq_d1_ener}), it is enough to establish that 
		\begin{equation}\label{eq_max_psh_ray_mon101}
			\lim_{k \to \infty} \frac{1}{k} E(U^{\mathcal{F}_{(k)}, u}_{t} ) = E(U^{\mathcal{F}, u}_t).
		\end{equation}
		\par Recall, however, that Berman-Boucksom-Jonsson in \cite[Corollary 6.7]{BerBouckJonYTD} established that a psh geodesic ray $U : [0, + \infty[ \to \mathcal{E}_{\omega}^1$ is maximal if and only if we have 
		\begin{equation}\label{eq_max_psh_ray_mon1022}
			E(U_t) = E(U_0) + t E(U_{NA}),
		\end{equation}
		for any $t \in [0, +\infty[$.
		Hence, we see that (\ref{eq_max_psh_ray_mon101}) is a consequence of (\ref{eq_thm_e_fs_filt}) and (\ref{eq_max_psh_ray_mon1022}).
	\end{proof}
	\par 
	As we recall in Section \ref{sect_filt}, with any ample test configuration $\mathcal{T}$ of $(X, L)$, one can associate a bounded submultiplicative filtration $\mathcal{F}_{\mathcal{T}}$ on $R(X, L)$. 
	Through a combination of the results of Phong-Sturm \cite{PhongSturmRegul}, Berman-Boucksom-Jonsson \cite{BerBouckJonYTD} and Boucksom-Jonsson \cite{BouckJohn21}, we obtain in Section \ref{sect_filt} the following result.
	\begin{thm}\label{thm_ray_coinc_test}
		For any filtration $\mathcal{F}_{\mathcal{T}}$ arising from an ample test configuration $\mathcal{T}$ of $(X, L)$, the conclusion of Theorem \ref{thm_ray_coinc} holds for rays emanating from smooth positive metrics.
	\end{thm}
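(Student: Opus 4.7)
The plan is to exploit the fact that, for filtrations coming from ample test configurations, the Bergman geodesic ray emanating from a smooth positive metric is very concrete, and then to identify it, via the energy criterion, with the maximal geodesic ray. More precisely, fix an ample test configuration $\mathcal{T}$ of $(X, L)$, denote $\mathcal{F} := \mathcal{F}_{\mathcal{T}}$, and let $h^L_0$ be a smooth positive metric on $L$. By Proposition \ref{prop_two_norms_comp}, the Fubini-Study geodesic ray $h^{L, FS}_{\mathcal{F}, t}$ emanating from $h^L_0$ coincides with the Bergman geodesic ray $h^{L, \mathcal{B}}_{\mathcal{F}, t}$, so we need only compare the latter with the maximal geodesic ray $h^{L, \max}_{\mathcal{F}, t}$.

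The first step is to invoke the result of Phong-Sturm \cite{PhongSturmTestGeodK}, refined by Phong-Sturm \cite{PhongSturmRegul} and its subsequent developments, asserting that for a filtration $\mathcal{F}_{\mathcal{T}}$ arising from an ample test configuration and for a smooth positive initial metric $h^L_0$, the Bergman geodesic ray $h^{L, \mathcal{B}}_{\mathcal{F}, t}$ coincides with the weak geodesic ray obtained as the envelope of subgeodesics with prescribed singularities at infinity coming from $\mathcal{T}$. This is precisely the psh geodesic ray solution of the Monge-Ampère equation (\ref{eq_deog_r_into}) whose behaviour near $+\infty$ is controlled by the total space of the test configuration.

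The second step is to match this ray with $h^{L, \max}_{\mathcal{F}, t}$ using the energy characterization of Berman-Boucksom-Jonsson \cite{BerBouckJonYTD}. By Theorem \ref{thm_ray_geod_bergm}, $h^{L, \mathcal{B}}_{\mathcal{F}, t}$ is a genuine psh geodesic ray of linear growth, hence Theorem \ref{thm_lin_grwth_e1} produces a well-defined non-Archimedean limit $(h^{L, \mathcal{B}}_{\mathcal{F}, \cdot})_{NA} \in \mathcal{E}^{1, NA}$. By Boucksom-Jonsson \cite[Theorem 3.7 and \S 6]{BouckJohn21}, the non-Archimedean potential of the ray associated with an ample test configuration is exactly $FS(\mathcal{F}_{\mathcal{T}})$, matching the non-Archimedean datum used to define $h^{L, \max}_{\mathcal{F}, t}$. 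It then remains to check the energy identity (\ref{eq_max_psh_ray_mon1022}) for $h^{L, \mathcal{B}}_{\mathcal{F}, t}$: for ample test configurations the Monge-Ampère energy along the ray is the (well-known) Donaldson-Futaki/Chen-Tian type affine function of $t$, as established in \cite{BouckJohn21} and \cite{BerBouckJonYTD}. By Berman-Boucksom-Jonsson \cite[Corollary 6.7]{BerBouckJonYTD}, this linearity of $t \mapsto E(h^{L, \mathcal{B}}_{\mathcal{F}, t})$ forces $h^{L, \mathcal{B}}_{\mathcal{F}, t}$ to be maximal, and together with the coincidence of initial data and non-Archimedean data this yields $h^{L, \mathcal{B}}_{\mathcal{F}, t} = h^{L, \max}_{\mathcal{F}, t}$.

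I expect the main obstacle to be the clean identification of the non-Archimedean potential of the Bergman/Fubini-Study ray with $FS(\mathcal{F}_{\mathcal{T}})$, since this requires tracking the envelope construction through the Gauss extension (\ref{eq_sigm_Gauss}) and verifying compatibility with the definition (\ref{eq_na_fs_defn}); however this is exactly the content of the dictionary between test configurations and non-Archimedean metrics developed in \cite{BouckJohn21}, so the argument reduces to a careful bookkeeping of citations rather than new analysis.
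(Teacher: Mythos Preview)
Your proposal is essentially correct and relies on the same circle of results as the paper, but the route you take is slightly more circuitous than the paper's own argument.

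The paper proceeds as follows: by Theorem \ref{thm_bbj_compat}, the maximal geodesic ray associated to $FS(\mathcal{F}_{\mathcal{T}})$ is characterized as the unique ray whose associated metric $\hat{h}^L$ extends as a bounded psh metric to the normalized test configuration. Pulling back to an equivariant resolution, this extension solves the Dirichlet problem (\ref{eq_ma_geod_dir}); by the uniqueness result of Phong-Sturm \cite[Theorem 5]{PhongSturmRegul}, it therefore coincides with the Monge-Amp\`ere geodesic ray. Then Theorem \ref{thm_ph_st_compat} identifies the Monge-Amp\`ere ray with the Bergman ray, and Proposition \ref{prop_two_norms_comp} identifies the Bergman ray with the Fubini-Study ray. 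So the chain is: maximal $=$ Monge-Amp\`ere $=$ Bergman $=$ Fubini-Study.

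You instead start from the Fubini-Study/Bergman side and try to verify maximality of the Bergman ray via the energy criterion of \cite[Corollary 6.7]{BerBouckJonYTD}. This works, but note that both ingredients you need---the identification of the non-Archimedean limit of the Bergman ray with $FS(\mathcal{F}_{\mathcal{T}})$, and the computation of the slope of $E$ along the ray as $E(FS(\mathcal{F}_{\mathcal{T}}))$---are established in \cite{BerBouckJonYTD} precisely by first exhibiting the ray as the bounded psh extension to the test configuration (i.e.\ as the Monge-Amp\`ere ray). So your second step implicitly passes through Theorem \ref{thm_bbj_compat} anyway. The paper's argument is more economical because it invokes that characterization directly rather than detouring through the energy criterion, and it avoids having to separately justify the identification of the non-Archimedean limit, which you correctly flag as the delicate point.
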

	We can now finally draw the main consequence of this section.
	\begin{proof}[Proof of Theorem \ref{thm_ray_coinc}]
		First of all, it is enough to establish the statement for filtrations with integer weights.
		Indeed, instead of $\mathcal{F}$, one can consider the round-down $\lfloor \mathcal{F} \rfloor$, defined in such a way that its weight function $w_{\lfloor \mathcal{F} \rfloor}$ (see (\ref{eq_filtr_norm}) for a definition) is related to $w_{\mathcal{F}}$ as follows
		\begin{equation}
			w_{\lfloor \mathcal{F} \rfloor} = \lfloor w_{\mathcal{F}} \rfloor.
		\end{equation}
		Remark that $\lfloor \mathcal{F} \rfloor$ is submultiplicative and bounded whenever $\mathcal{F}$ is.
		Directly from the definitions, we then obtain $h^{L, FS}_{\mathcal{F}, t} = h^{L, FS}_{\lfloor \mathcal{F} \rfloor, t}$.
		According to \cite[Example 1.7]{BouckJohn21}, we have $FS(\lfloor \mathcal{F} \rfloor) = FS(\mathcal{F})$, resulting in the identity $h^{L, \max}_{\mathcal{F}, t} = h^{L, \max}_{\lfloor \mathcal{F} \rfloor, t}$.
		This means that the statements of Theorem \ref{thm_ray_coinc} for $\lfloor \mathcal{F} \rfloor$ and $\mathcal{F}$ are equivalent.
		We, hence, assume that $\mathcal{F}$ has integer weights.
		\par 
		As we recall in Section \ref{sect_filt}, for a filtration $\mathcal{F}$ with integer weights, for any $k \in \nat^*$, the filtration $\mathcal{F}_{(k)}$ is associated with an ample test configuration.
		Now, Theorem \ref{thm_ray_coinc} for rays emanating from smooth positive metrics is a trivial consequence of Theorems \ref{thm_can_trunc_bergm}, \ref{thm_max_can_appr}, \ref{thm_ray_coinc_test} and (\ref{eq_darvas_conv}).
		\par 
		Let us now establish Theorem \ref{thm_ray_coinc} for rays emanating from regularizable from above psh metrics $h^L_0$.
		Consider a sequence of smooth positive metrics $h^L_i$, $i \in \nat^*$, decreasing almost everywhere to $h^L_0$.
		 Such a sequence exists by the definition of regularizable from above psh metrics and Remark \ref{rem_regul_above}b).
		 We already know that Fubini-Study geodesics rays associated to $h^L_i$, denoted here by $h^{L, FS}_{\mathcal{F}, t, i}$, are identical to the respective maximal geodesic rays, denoted here by $h^{L, \max}_{\mathcal{F}, t, i}$. 
		 Since both  $h^{L, FS}_{\mathcal{F}, t, i}$ and $h^{L, \max}_{\mathcal{F}, t, i}$ are decreasing in $i \in \nat$ by (\ref{eq_max_psh_ray_mon}) and obvious reasons, by (\ref{eq_darvas_conv}) it is enough to show that both $h^{L, FS}_{\mathcal{F}, t, i}$ and $h^{L, \max}_{\mathcal{F}, t, i}$ behave continuously in topology of $\mathcal{E}_{\omega}^1$, as $i \to \infty$.
		 For this, similarly to the proof of Theorem \ref{thm_filt}, we may assume that the filtration $\mathcal{F}$ satisfies the additional assumption (\ref{eq_filt_bnd_zero}).
		 Then from (\ref{eq_ord_n_norms}) and (\ref{eq_max_psh_ray_mon}), the following string of inequalities is satisfied
		\begin{equation}\label{eq_ineq_fs_approx_above}
		\begin{aligned}
			&
			h^{L, FS}_{\mathcal{F}, t, i} \geq h^{L, FS}_{\mathcal{F}, 0, i} \geq h^{L, FS}_{\mathcal{F}, 0},
			&&
			h^{L, \max}_{\mathcal{F}, t, i} \geq h^{L, \max}_{\mathcal{F}, 0, i} \geq h^{L, \max}_{\mathcal{F}, 0},
			\\
			&
			h^{L, FS}_{\mathcal{F}, t, i} \geq h^{L, FS}_{\mathcal{F}, t} \geq h^{L, FS}_{\mathcal{F}, 0},
			&&
			h^{L, \max}_{\mathcal{F}, t, i} \geq h^{L, \max}_{\mathcal{F}, t} \geq h^{L, \max}_{\mathcal{F}, 0}.
		\end{aligned}
		\end{equation}
		From (\ref{eq_d1_ener}) and (\ref{eq_ineq_fs_approx_above}), we conclude that 
		\begin{equation}\label{eq_equal_b_fs_0}
		\begin{aligned}
			&
			d_1(h^{L, FS}_{\mathcal{F}, t, i}, h^{L, FS}_{\mathcal{F}, t}) 
			= 
			d_1(h^{L, FS}_{\mathcal{F}, 0, i}, h^{L, FS}_{\mathcal{F}, 0})
			-
			d_1(h^{L, FS}_{\mathcal{F}, t}, h^{L, FS}_{\mathcal{F}, 0})
			+
			d_1(h^{L, FS}_{\mathcal{F}, t, i}, h^{L, FS}_{\mathcal{F}, 0, i}),
			\\
			&
			d_1(h^{L, \max}_{\mathcal{F}, t, i}, h^{L, \max}_{\mathcal{F}, t}) 
			= 
			d_1(h^{L, \max}_{\mathcal{F}, 0, i}, h^{L, \max}_{\mathcal{F}, 0})
			-
			d_1(h^{L, \max}_{\mathcal{F}, t}, h^{L, \max}_{\mathcal{F}, 0})
			+
			d_1(h^{L, \max}_{\mathcal{F}, t, i}, h^{L, \max}_{\mathcal{F}, 0, i}).
		\end{aligned}
		\end{equation}
		\par 
		Let us deal with maximal geodesic rays first. 
		From (\ref{eq_d1_ener}), (\ref{eq_max_psh_ray_mon1022}) and (\ref{eq_ineq_fs_approx_above}), we deduce
		\begin{equation}\label{eq_equal_b_fs_00}
			d_1(h^{L, \max}_{\mathcal{F}, t, i}, h^{L, \max}_{\mathcal{F}, 0, i})
			=
			d_1(h^{L, \max}_{\mathcal{F}, t}, h^{L, \max}_{\mathcal{F}, 0}).
		\end{equation}
		By Theorem \ref{thm_lin_grwth_e1}, we have $h^{L, \max}_{\mathcal{F}, 0, i} = h^L_i$ and $h^{L, \max}_{\mathcal{F}, 0} = h^L$. 
		Hence, by (\ref{eq_darvas_conv}), we deduce that $d_1(h^{L, \max}_{\mathcal{F}, 0, i}, h^{L, \max}_{\mathcal{F}, 0}) \to 0$.
		From this, (\ref{eq_equal_b_fs_0}) and (\ref{eq_equal_b_fs_00}), we yield that maximal geodesic rays behave continuously in $\mathcal{E}_{\omega}^1$, i.e.
		\begin{equation}\label{eq_equal_b_fs_2}
			\lim_{i \to \infty} d_1(h^{L, \max}_{\mathcal{F}, t, i}, h^{L, \max}_{\mathcal{F}, t})
			=
			0.
		\end{equation}
		\par Let us establish the corresponding statement for Fubini-Study geodesic rays.
		By Lemma \ref{lem_log_spec_ray} and (\ref{eq_ray_nm_101}), we conclude that
		\begin{equation}\label{eq_equal_b_fs_1}
			d_1(h^{L, FS}_{\mathcal{F}, t, i}, h^{L, FS}_{\mathcal{F}, 0, i})
			=
			d_1(h^{L, FS}_{\mathcal{F}, t}, h^{L, FS}_{\mathcal{F}, 0}).
		\end{equation}
		Since $h^L_0$ is regularizable from above, by Theorem \ref{thm_ray_geod_bergm}, we have $h^{L, FS}_{\mathcal{F}, 0, i} = h^L_i$ and $h^{L, FS}_{\mathcal{F}, 0} = h^L$, hence, by (\ref{eq_darvas_conv}), we deduce that $d_1(h^{L, FS}_{\mathcal{F}, 0, i}, h^{L, FS}_{\mathcal{F}, 0}) \to 0$.
		From this, (\ref{eq_equal_b_fs_0}), (\ref{eq_equal_b_fs_2}) and (\ref{eq_equal_b_fs_1}), we deduce that Fubini-Study geodesic rays behave continuously in topology of $\mathcal{E}_{\omega}^1$.
	\end{proof}

	\subsection{Filtrations, test configurations and geodesic rays}\label{sect_filt}
	The goal of this section is to recall the relation between test configurations and filtrations and to establish Theorem \ref{thm_ray_coinc_test}.
	Recall first that a test configuration $\mathcal{T} = (\mathcal{X}, \mathcal{L})$ for $(X, L)$ consists of
	\begin{enumerate}
		\item A scheme $\mathcal{X}$ with a $\comp^*$-action $\rho$,
		\item A $\comp^*$-equivariant line bundle $\mathcal{L}$ over $\mathcal{X}$,
		\item A flat $\comp^*$-equivariant projection $\pi : \mathcal{X} \to \comp$, where $\comp^*$ acts on $\comp$ by multiplication if we denote by $X_t := \pi^{-1}(t)$, then $\mathcal{L}|_{X_1}$ is isomorphic to $L^r$ for some $r > 0$.
	\end{enumerate}
	For simplicity, we assume from now on that $r = 1$ in the above definition.
	We say that the test configuration is (semi)ample if $\mathcal{L}$ is relatively (semi)ample.
	We say that it is normal if $\mathcal{X}$ is normal.
	Remark that the $\comp^*$-action induces the canonical isomorphisms
	\begin{equation}\label{eq_can_ident_test}
		\mathcal{X} \setminus X_0 \simeq X \times \comp^*, \qquad \mathcal{L}|_{\mathcal{X} \setminus X_0} \simeq \pi^* L.
	\end{equation}
	\par 
	Let us construct a submultiplicative filtration  $\mathcal{F}_{\mathcal{T}}$ on $R(X, L)$ associated with a test configuration $\mathcal{T}$ as follows.
	Pick an element $s \in H^0(X, L^{\otimes k})$, $k \in \nat^*$, and consider the section $\tilde{s} \in H^0(\mathcal{X} \setminus X_0, \mathcal{L})$, obtained by the application of the $\comp^*$-action to $s$.
	By the flatness of $\pi$, the section $\tilde{s}$ extends to a meromorphic section over $\mathcal{X}$, cf. Witt Nystr{\"o}m \cite[Lemma 6.1]{NystOkounTest}.
	In other words, there is $k \in \integ$, such that for a coordinate $z$ on $\comp$, we have $\tilde{s} \cdot z^{k} \in H^0(\mathcal{X}, \mathcal{L})$.
	\par 
	We define the filtration $\mathcal{F}_{\mathcal{T}}$ as follows
	\begin{equation}
		\mathcal{F}_{\mathcal{T}}^{\lambda} H^0(X, L^{\otimes k})
		:=
		\Big\{
			s \in H^0(X, L^{\otimes k}) : \tilde{s} \cdot z^{- \lceil \lambda \rceil} \in H^0(\mathcal{X}, \mathcal{L})
		\Big\}.
	\end{equation}
	As it was observed in \cite[(9)]{NystOkounTest}, the associated graded algebra of this filtration can be identified with the section ring of the central fiber of the fibration, $R(X_0, \mathcal{L}|_{X_0})$, endowed with the bigrading coming from the associated $\comp^*$-action and the natural grading of the section ring.
	From this observation and the fact that the bigraded ring $R(X_0, \mathcal{L}|_{X_0})$ is finitely generated for relatively ample $\mathcal{L}$, we conclude that the bigraded algebra associated with a filtration $\mathcal{F}_{\mathcal{T}}$ on $R(X, L)$ of an ample test configuration $\mathcal{T}$ is finitely generated as well.
	\par 
	In fact, Rees construction implies that any filtration with integer weights, for which the associated bigraded algebra is finitely generated, arises from an ample test configuration, see Sz{\'e}kelyhidi \cite[\S 3.1]{SzekeTestConf} or Boucksom-Jonsson \cite[\S A.2]{BouckJohn21}.
	In particular, for any filtration $\mathcal{F}$ with integer weights on $R(X, L)$, the filtrations $\mathcal{F}_{(k)}$, $k \in \nat^*$, are associated with ample test configurations.
	\par 
	Remark that finite generatedness of the bigraded algebra associated with the filtration $\mathcal{F}_{\mathcal{T}}$ on $R(X, L)$ implies that the filtration $\mathcal{F}_{\mathcal{T}}$ is bounded, see also Phong-Sturm \cite[Lemma 4]{PhongSturmTestGeodK}.
	\par 
	Now, consider a geodesic ray $h^L_t$, $t \in [0, +\infty[$, emanating from a bounded psh metric $h^L_0$.
	As in (\ref{eq_defn_hat_u}), we construct a metric $\hat{h}^L$ on $\pi^* L$ over $X \times \mathbb{D}^*$.
	Consider a test configuration $\mathcal{T} := (\mathcal{X}, \mathcal{L})$, $\pi : \mathcal{X} \to \comp$ and take its restriction to a unit disc $\pi_{\mathbb{D}} : \mathcal{X}_{\mathbb{D}} \to \mathbb{D}$, $\mathcal{L}_{\mathbb{D}} := \mathcal{L}|_{\mathcal{X}_{\mathbb{D}}}$.
	\begin{thm}[{Berman-Boucksom-Jonsson \cite[Lemmas 4.4, 5.3 and Corollary 6.7]{BerBouckJonYTD} and Boucksom-Jonsson \cite[Lemma A.12]{BouckJohn21}}]\label{thm_bbj_compat}
		Assume that $\mathcal{T}$ is ample and normal.
		Then, taking into account identification (\ref{eq_can_ident_test}), the metric $\hat{h}^L$ extends as a bounded psh metric to $\mathcal{L}_{\mathbb{D}}$ if and only if $h^L_t$ is a maximal geodesic ray with respect to the non-Archimedean potential $FS(\mathcal{F}_{\mathcal{T}})$.
	\end{thm}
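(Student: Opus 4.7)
The plan is to establish both implications by translating the complex-geometric extensibility condition on $\hat{h}^L$ into non-Archimedean data supported on the central fiber $X_0$, and then invoking the energy characterization of maximal geodesic rays (\ref{eq_max_psh_ray_mon1022}).

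First, I would fix a smooth $\comp^*$-equivariant reference metric $\hat{h}_{\mathcal{T}}$ on $\mathcal{L}_{\mathbb{D}}$, whose existence follows from ampleness and normality of $\mathcal{T}$. Under the identification (\ref{eq_can_ident_test}), the function $\phi := - \log(\hat{h}^L / \hat{h}_{\mathcal{T}})$ becomes a well-defined psh potential on $\mathcal{X}_{\mathbb{D}} \setminus X_0$, and the extensibility of $\hat{h}^L$ as a bounded psh metric on $\mathcal{L}_{\mathbb{D}}$ is equivalent to $\phi$ being locally bounded near $X_0$ (the psh extension across the analytic set $X_0$ being then automatic by standard extension theorems, cf. \cite[Theorem I.5.23]{DemCompl}). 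The central technical link is then to match, for every irreducible component $E \subset X_0$ with associated divisorial valuation $v_E$, the generic Lelong number of $\phi$ along $E$ with the value $- U_{NA}(v_E)$; this rests on the definition of $U_{NA}$ via Gauss extensions (\ref{eq_sigm_Gauss}) together with the interpretation of generic Lelong numbers on blowups, essentially the content of \cite[Lemma A.12]{BouckJohn21}.

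For the direction assuming the extension exists, I would compute $U_{NA}$ divisorially and identify it with $FS(\mathcal{F}_{\mathcal{T}})$ by comparing the generic Lelong numbers along components of $X_0$ with the Fubini-Study values of $\mathcal{F}_{\mathcal{T}}$ on the corresponding divisorial valuations, as in the definition (\ref{eq_na_fs_defn}). Maximality would then follow by computing $E(U_t)$ through intersection theory on a suitable smooth equivariant compactification of $\mathcal{X}_{\mathbb{D}}$, yielding the linear identity $E(U_t) = E(U_0) + t \cdot E(FS(\mathcal{F}_{\mathcal{T}}))$, to which the criterion (\ref{eq_max_psh_ray_mon1022}) applies. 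For the converse, assuming maximality with $U_{NA} = FS(\mathcal{F}_{\mathcal{T}})$, I would use the envelope description of maximal rays to produce a uniform upper bound on $\phi$ near $X_0$ in terms of the local weight of $\hat{h}_{\mathcal{T}}$, combined with a matching lower bound inherited from the non-Archimedean Dirichlet data; together these give the required boundedness of $\phi$.

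The main obstacle will be the \emph{divisorial matching step}: controlling simultaneously, across all components of $X_0$, the generic Lelong numbers of the psh extension and the Fubini-Study values of $FS(\mathcal{F}_{\mathcal{T}})$, together with the compatibility between the singular reference metric on $\mathcal{L}$ arising from its relative ampleness and the non-Archimedean Fubini-Study presentation of the filtration. Once this correspondence is set up, both directions reduce to routine invocations of the energy criterion (\ref{eq_max_psh_ray_mon1022}) and the approximation and semicontinuity statements in \cite[Lemmas 4.4 and 5.3]{BerBouckJonYTD}.
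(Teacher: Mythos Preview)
The paper does not give its own proof of this statement: Theorem~\ref{thm_bbj_compat} is stated purely as a citation of results from \cite{BerBouckJonYTD} and \cite{BouckJohn21}, with no argument supplied in the text. There is therefore nothing in the paper to compare your proposal against.

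That said, your outline is broadly faithful to the route taken in the cited references. The reduction to local boundedness of $\phi = -\log(\hat{h}^L/\hat{h}_{\mathcal{T}})$ near $X_0$, the identification of generic Lelong numbers along the components of $X_0$ with the values of $U_{NA}$ on the corresponding divisorial points (this is indeed what \cite[Lemma A.12]{BouckJohn21} encodes), and the use of the energy criterion (\ref{eq_max_psh_ray_mon1022}) for maximality are the correct structural ingredients. One point to be careful about in the forward direction: bounded extension alone does not obviously yield the linear energy identity; what one actually uses is that the extended metric, being a bounded psh solution of the homogeneous Monge--Amp\`ere equation on the total space, allows $E(U_t)$ to be computed as an intersection number on a compactified model, and it is this intersection-theoretic computation (carried out in \cite[\S 4--5]{BerBouckJonYTD}) that produces the affine behaviour in $t$. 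Your sketch acknowledges this but compresses it considerably. Likewise, in the converse direction, the ``matching lower bound inherited from the non-Archimedean Dirichlet data'' is the nontrivial step: one must show that $FS(\mathcal{F}_{\mathcal{T}})$ actually controls the growth of the maximal ray near $X_0$ from below, which in \cite{BerBouckJonYTD} goes through comparison with explicit Fubini--Study subgeodesics built from the filtration. If you intend to write this out in full, that is where the real work lies.
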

	\par 
	Let us now give, following Phong-Sturm \cite{PhongSturmDirMA}, a construction of geodesic rays associated with an ample test configuration $\mathcal{T} = (\mathcal{X}, \mathcal{L})$ through the solution of the Dirichlet problem for a Monge-Ampère equation.
	Consider the test configuration $\tilde{\mathcal{T}} = (\tilde{\mathcal{X}}, \tilde{\mathcal{L}})$ given by the normalization of a fixed ample test configuration $\mathcal{T}$.
	Consider a $\comp^*$-equivariant resolution $p : \mathcal{X}' \to \tilde{\mathcal{X}}$ of $\tilde{\mathcal{X}} \to \comp$ and denote $\mathcal{L}' := p^* \tilde{\mathcal{L}}$.
	Consider the restriction $\pi: \mathcal{X}'_{\mathbb{D}} \to \mathbb{D}$ of $\pi: \mathcal{X}' \to \comp$ to the unit disc $\mathbb{D}$ and denote $\mathcal{L}'_{\mathbb{D}} := \mathcal{L}|_{\mathcal{X}'_{\mathbb{D}}}$.
	Phong-Sturm in \cite[Theorem 3]{PhongSturmDirMA} established that for any fixed smooth positive metric $h^L_0$ on $L$, there is a rotation invariant bounded psh metric $h^{\mathcal{L}'}$ over $\mathcal{L}'_{\mathbb{D}}$, verifying the Monge-Ampère equation
	\begin{equation}\label{eq_ma_geod_dir}
		c_1(\mathcal{L}', h^{\mathcal{L}'})^{n + 1} = 0,
	\end{equation}
	and such that its restriction over $\partial \tilde{\mathcal{X}}_{\mathbb{D}}$ coincides with the rotation-invariant metric obtained from the fixed metric $h^L_0$ on $L$.
	Under the identification (\ref{eq_can_ident_test}), we then construct a geodesic ray $h^{L, MA}_t$, $t \in [0, + \infty[$, such that $\hat{h}^{L, MA} = h^{\mathcal{L}'}$ in the notations (\ref{eq_defn_hat_u}).
	\par
	Recall that Phong-Sturm in \cite[Theorem 5]{PhongSturmRegul} established that there is a unique bounded psh solution to (\ref{eq_ma_geod_dir}).
	Since any two $\comp^*$-equivariant resolutions can be dominated by a third one, and a pull-back of a solution (\ref{eq_ma_geod_dir}) from one resolution will be a solution on another resolution, by the same uniqueness theorem, the geodesic ray $h^{L, MA}_t$, $t \in [0, + \infty[$, is independent of the choice of the $\comp^*$-equivariant resolution.
	We call $h^{L, MA}_t$ the \textit{Monge-Ampère geodesic ray}.
	\begin{thm}[{\cite[Theorem 3]{PhongSturmRegul}}]\label{thm_ph_st_compat}
		For any ample test configuration $\mathcal{T}$ of $(X, L)$, the Monge-Ampère geodesic ray emanating from a fixed smooth positive metric on $L$ coincides with the respective Bergman geodesic ray associated to the filtration $\mathcal{F}_{\mathcal{T}}$ arising from $\mathcal{T}$.
	\end{thm}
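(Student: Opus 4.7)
The plan is to identify both rays as maximal geodesic rays (in the sense of Berman–Boucksom–Jonsson) emanating from the same smooth positive metric $h^L_0$ and having the same non-Archimedean potential $FS(\mathcal{F}_\mathcal{T})$, and then invoke the uniqueness of such rays given by Theorem \ref{thm_lin_grwth_e1}.

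First, for the Monge–Ampère ray: by construction, the rotation-invariant lift $\hat{h}^{L, MA} = h^{\mathcal{L}'}$ is a bounded psh metric on $\mathcal{L}'_\mathbb{D}$ solving (\ref{eq_ma_geod_dir}). Since pullback under a $\comp^*$-equivariant resolution preserves psh and since the Monge–Ampère solution descends through the normalization (the uniqueness of bounded psh solutions of (\ref{eq_ma_geod_dir}) makes this independent of the chosen resolution), $\hat{h}^{L, MA}$ extends as a bounded psh metric to $\mathcal{L}_\mathbb{D}$ in the appropriate sense. Theorem \ref{thm_bbj_compat} then directly identifies $h^{L, MA}_t$ with the maximal geodesic ray emanating from $h^L_0$ whose non-Archimedean potential is $FS(\mathcal{F}_\mathcal{T})$.

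Second, for the Bergman ray: by Proposition \ref{prop_two_norms_comp}, $h^{L, \mathcal{B}}_{\mathcal{F}_\mathcal{T}, t} = h^{L, FS}_{\mathcal{F}_\mathcal{T}, t}$, and by Theorem \ref{thm_ray_geod_bergm}, this is a geodesic ray emanating from $h^L_0$. It remains to compute its non-Archimedean singularity. The key input is the interpolation identity (\ref{eq_na_nm_interpol}), which says that $\log \chi_{\mathcal{F}_\mathcal{T}, k}(s) = \lim_{t\to\infty} \frac{1}{t} \log \|s\|^t_{\mathcal{F}_\mathcal{T}, k}$ for each $s \in H^0(X, L^k)$. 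Applying Fubini–Study and passing to the limit $k \to \infty$, the pointwise asymptotic slope of $\hat{h}^{L, FS}_{\mathcal{F}_\mathcal{T}}$ along any divisorial valuation $v \in X^{\mathrm{div}}$, evaluated via Lelong numbers on a suitable blow-up, reduces (through Fekete averaging of $FS(\mathcal{F}_\mathcal{T})_k$ defined in (\ref{eq_fs_fin_ord})) to $FS(\mathcal{F}_\mathcal{T})(v)$. Since both $\bigl(h^{L, FS}_{\mathcal{F}_\mathcal{T}, t}\bigr)_{NA}$ and $FS(\mathcal{F}_\mathcal{T})$ lie in $\mathcal{E}^{1, NA}$, and the former is $\leq$ the latter (since the Fubini–Study ray is built from envelopes dominated by the non-Archimedean Fubini–Study data), it suffices to check equality of energies; here the finite generation of the bigraded algebra associated with $\mathcal{F}_\mathcal{T}$ (which holds for ample test configurations) ensures that volumes/energies stabilize, giving $\bigl(h^{L, FS}_{\mathcal{F}_\mathcal{T}, t}\bigr)_{NA} = FS(\mathcal{F}_\mathcal{T})$ via the maximality criterion (\ref{eq_max_psh_ray_mon1022}).

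The main obstacle is this last verification — that the non-Archimedean asymptotic $U_{NA}$ of the Fubini–Study geodesic ray equals $FS(\mathcal{F}_\mathcal{T})$ as an element of $\mathcal{E}^{1, NA}$, rather than only on the divisorial dense subset. The one-sided inequality $U_{NA} \leq FS(\mathcal{F}_\mathcal{T})$ follows directly from the construction via (\ref{eq_fs_fin_ord}) and Fekete's lemma, but upgrading to equality requires either an explicit Bergman kernel estimate (as in Phong–Sturm's original approach) or, more in the spirit of the present paper, matching the $d_1$-length of the ray with $-t \cdot {\rm{vol}}(\mathcal{F}_\mathcal{T})$ via Lemma \ref{lem_log_spec_ray} and comparing with the energy $E(FS(\mathcal{F}_\mathcal{T}))$ through the first identity in (\ref{eq_thm_e_fs_filt}). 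Once equality of non-Archimedean potentials is established, applying the uniqueness part of Theorem \ref{thm_lin_grwth_e1} with initial datum $h^L_0$ and non-Archimedean datum $FS(\mathcal{F}_\mathcal{T})$ concludes that the Monge–Ampère and Bergman rays coincide.
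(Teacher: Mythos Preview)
The paper does not give its own proof of this statement --- it is quoted as a black-box result from Phong--Sturm \cite[Theorem~3]{PhongSturmRegul}, whose original argument runs via Bergman kernel asymptotics on an equivariant resolution of the test configuration. In the paper's logical structure, Theorem~\ref{thm_ph_st_compat} is an \emph{input}: it is combined with Theorem~\ref{thm_bbj_compat} to deduce Theorem~\ref{thm_ray_coinc_test} (Fubini--Study ray $=$ maximal ray for filtrations coming from ample test configurations), which then serves as the base case for Theorem~\ref{thm_ray_coinc} via canonical approximation.

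Your proposal runs in the opposite direction: you try to show Monge--Amp\`ere $=$ Bergman by showing each coincides with the maximal ray. The first half (Monge--Amp\`ere $=$ maximal via Theorem~\ref{thm_bbj_compat}) is fine, and indeed is exactly how the paper proves Theorem~\ref{thm_ray_coinc_test}. But the second half --- that the Fubini--Study/Bergman ray is maximal with non-Archimedean datum $FS(\mathcal{F}_{\mathcal{T}})$ --- is precisely the content of Theorem~\ref{thm_ray_coinc_test}, which the paper establishes \emph{using} Theorem~\ref{thm_ph_st_compat}. So within the paper's dependency graph your argument is circular: you are invoking (a special case of) the conclusion of Theorem~\ref{thm_ray_coinc} to prove one of its ingredients.

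As a standalone route bypassing Phong--Sturm, the gap you flag is genuine and not closed by what you wrote. To apply the uniqueness in Theorem~\ref{thm_lin_grwth_e1} you need both that the Fubini--Study ray is maximal and that its non-Archimedean potential is $FS(\mathcal{F}_{\mathcal{T}})$. The energy-matching idea via (\ref{eq_max_psh_ray_mon1022}) requires knowing $E(FS(\mathcal{F}_{\mathcal{T}})) = {\rm{vol}}(\mathcal{F}_{\mathcal{T}})$ (a Boucksom--Jonsson identity not invoked here) together with a domination principle in $\mathcal{E}^{1,NA}$ to upgrade $U_{NA} \leq FS(\mathcal{F}_{\mathcal{T}})$ plus equal energies to $U_{NA} = FS(\mathcal{F}_{\mathcal{T}})$; and the Lelong-number computation you sketch from (\ref{eq_na_nm_interpol}) is only heuristic, since it interchanges the $t\to\infty$ slope with the $k\to\infty$ Fubini--Study limit without justification. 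This interchange is exactly the hard analytic content that Phong--Sturm's theorem supplies.
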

	\begin{proof}[Proof of Theorem \ref{thm_ray_coinc_test}]
		From Theorem \ref{thm_bbj_compat}, the maximal geodesic ray extends to the normalization of the test configuration as a bounded psh metric over the pull-back of $\mathcal{L}$. 
		Therefore after pulling it back to an equivariant resolution, we get a solution of (\ref{eq_ma_geod_dir}).
		Hence, by the unicity result of Phong-Sturm \cite[Theorem 5]{PhongSturmRegul}, the maximal geodesic ray coincides with the Monge-Ampère geodesic ray.
		The result now follows from Theorem \ref{thm_ph_st_compat} and Proposition \ref{prop_two_norms_comp}.
	\end{proof}

\bibliography{bibliography}

		\bibliographystyle{abbrv}

\Addresses

\end{document}